\title{Goldberg's Conjecture is true for random multigraphs}
\author{Penny Haxell}
\thanks{Department of Combinatorics and
		Optimization, University of Waterloo, Waterloo, Ontario, Canada N2L
		3G1. pehaxell@uwaterloo.ca. Partially
		supported by NSERC} 
\author{
	Michael Krivelevich}
\thanks{School of Mathematical Sciences,
		Raymond and Beverly Sackler Faculty of Exact Sciences, Tel Aviv
		University, Tel Aviv, 6997801, Israel. Email:
		krivelev@post.tau.ac.il. Partially supported by USA-Israel BSF grant 2014361, and by ISF grant 1261/17.} 
	\author{ Gal Kronenberg}
	\thanks{School of Mathematical Sciences, Raymond and Beverly Sackler Faculty of Exact Sciences, Tel Aviv University, Tel Aviv, 6997801, Israel. Email: galkrone@mail.tau.ac.il.}
\date{\today}
\def\cM{{\mathcal M}}
\def\mnm{{M(n,m)}}
\newcommand{\whp}{w.h.p.\ }
\newcommand{\Bin}{\ensuremath{\textrm{Bin}}}
\theoremstyle{plain}
\newtheorem{theorem}{Theorem}[section]
\newtheorem{lemma}[theorem]{Lemma}
\newtheorem{claim}[theorem]{Claim}
\newtheorem{observation}[theorem]{Observation}
\newtheorem{corollary}[theorem]{Corollary}
\newtheorem{conjecture}[theorem]{Conjecture}
\newtheorem{remark}[theorem]{Remark}
\long\def\symbolfootnote[#1]#2{\begingroup\def\thefootnote{\fnsymbol{footnote}}
\footnote[#1]{#2}\endgroup}
\begin{document}
\maketitle

\begin{abstract}
In the 70s, Goldberg, and independently Seymour, conjectured that for
any multigraph $G$, the chromatic index $\chi'(G)$ satisfies $\chi'(G)\leq \max \{\Delta(G)+1,
\lceil\rho(G)\rceil\}$, where $\rho(G)=\max \{\frac {e(G[S])}{\lfloor
  |S|/2\rfloor} \mid S\subseteq V \}$. We show that their conjecture
(in a stronger form) is true  for random multigraphs. Let $\mnm$
be the probability space consisting of all loopless multigraphs with
$n$ vertices and $m$ edges, in which $m$ pairs from $[n]$ are chosen
independently at random with repetitions.  
Our result states that, for a given $m:=m(n)$,  $M\sim M(n,m)$ typically satisfies $\chi'(G)=\max\{\Delta(G),\lceil\rho(G)\rceil\}$. In particular, we show that if $n$ is even and $m:=m(n)$, then $\chi'(M)=\Delta(M)$ for a typical $M\sim M(n,m)$. Furthermore, for a fixed $\varepsilon>0$, if $n$ is odd, then a typical $M\sim M(n,m)$ has $\chi'(M)=\Delta(M)$ for $m\leq (1-\varepsilon)n^3\log n$, and $\chi'(M)=\lceil\rho(M)\rceil$ for $m\geq (1+\varepsilon)n^3\log n$. To prove this result, we develop a new structural characterization of multigraphs with chromatic index larger than the maximum degree.  
\end{abstract}

\noindent {\bf Keywords:} chromatic index, edge colouring, random graphs, random multigraphs.

\section{Introduction}\label{intro}

For a (multi)graph $G=(V,E)$, a \textit{$k$-edge-colouring} of $G$ is a
function $c:E\to [k]$ where $[k]=\{1,\ldots,k\}$, such that
$c(e)\not=c(f)$ whenever $e$ and $f$ share a vertex. We 
denote by $\chi'(G)$ the minimum $k$ such that $G$ has a
$k$-edge-colouring. Since no multigraph with a loop has a
$k$-edge-colouring for any $k$, we will assume throughout this paper
that all our multigraphs are loopless.
It is clear that for every multigraph $G$, the maximum degree $\Delta(G)$ of $G$ is a lower
bound on $\chi'(G)$.

For (simple) graphs, Vizing's Theorem 
\cite{vizing64} tells us that $\chi'(G)\leq\Delta(G)+1$ for every
graph $G$. We say that a graph
$G$ is \textit{Class 1} if $\chi'(G)=\Delta(G)$, and \textit{Class 2}
if $\chi'(G)=\Delta(G)+1$. The problem of determining whether an
arbitrary graph is Class 1 is known to be NP-hard  \cite{Holyer}, and
there has been extensive research regarding the conditions under which
a graph is Class 1 or Class 2. One of the tools to attack this problem
is the following theorem, also due to Vizing.

\begin{theorem}[Vizing \cite{vizing64}]\label{vizing}
	If $G$ is a simple graph with maximum degree $\Delta$ such that every cycle of $G$ contains  a vertex of degree less than $\Delta$, then $\chi'(G)=\Delta$. 
\end{theorem}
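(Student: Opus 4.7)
My plan is a proof by contradiction. Suppose $G$ satisfies the hypothesis and, in order to derive a contradiction, $\chi'(G) > \Delta := \Delta(G)$. The case $\Delta \leq 1$ is trivial (then $G$ has no cycle and is a matching together with isolated vertices), so I may assume $\Delta \geq 2$. By Vizing's upper bound $\chi'(G) \leq \Delta + 1$, in fact $\chi'(G) = \Delta + 1$. Successively remove edges while preserving the property $\chi' = \Delta + 1$ to obtain an edge-critical subgraph $H \subseteq G$: $\chi'(H) = \Delta + 1$ but $\chi'(H - e) \leq \Delta$ for every $e \in E(H)$. Since $\chi'(H) \leq \Delta(H) + 1$, we must have $\Delta(H) = \Delta$, and every vertex of degree $\Delta$ in $H$ still has degree $\Delta$ in $G$, so a cycle of max-degree vertices in $H$ is a forbidden cycle in $G$.

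The core tool is the classical Vizing adjacency lemma: if $H$ is edge-critical with $\chi'(H) = \Delta(H) + 1$, then for every edge $uv \in E(H)$, the vertex $u$ has at least $\Delta(H) - \deg_H(v) + 1$ neighbors in $H$ of degree $\Delta(H)$, other than $v$. I would prove this by the standard Vizing \emph{fan} argument: fix a $\Delta$-edge-coloring $c$ of $H - uv$ (which exists by criticality), let $\alpha$ be a color missing at $v$, and greedily build a maximal fan $v = v_0, v_1, \ldots, v_s$ at $u$ where the color of $uv_i$ is missing at $v_{i-1}$. Analysis of the possible ways the fan terminates — either some color missing at $u$ is also missing at some $v_i$ (which, after rotating the fan, extends $c$ to all of $H$, contradicting $\chi'(H) > \Delta$), or one forces a Kempe chain between $\alpha$ and a color missing at $u$ whose swap again extends the coloring — forces the fan to be long and its vertices (other than $v$) to have degree $\Delta$.

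Given the lemma, the theorem follows quickly. Pick any vertex $u$ of degree $\Delta$ in $H$ and any neighbor $v$ of $u$ in $H$. The lemma gives $u$ at least $\Delta - \deg_H(v) + 1$ neighbors of degree $\Delta$ in $H$ other than $v$. If $\deg_H(v) < \Delta$ this number is already $\geq 2$; if $\deg_H(v) = \Delta$ then it is $\geq 1$ and $v$ itself is a degree-$\Delta$ neighbor of $u$, so again $u$ has at least two neighbors of degree $\Delta$ in $H$. Thus the subgraph of $H$ induced by the vertices of degree $\Delta$ has minimum degree $\geq 2$ and therefore contains a cycle. This cycle is a cycle of $G$ in which every vertex has degree $\Delta$ in $G$, contradicting the hypothesis.

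The main obstacle is the fan-plus-Kempe-chain case analysis establishing Vizing's adjacency lemma: it is the technical heart of the argument, requiring one to track how colors rotate along the fan and how Kempe swaps interact at its terminal vertex. Once the lemma is available, the deduction of the theorem is a short combinatorial observation about the subgraph on max-degree vertices.
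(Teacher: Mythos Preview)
The paper does not actually prove Theorem~\ref{vizing}; it is quoted from~\cite{vizing64} as a known tool and used as a black box throughout (see the sentence immediately following the statement). So there is no ``paper's own proof'' to compare against.

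Your argument is the standard one and is correct: pass to an edge-critical subgraph $H$ with $\chi'(H)=\Delta+1$ (forcing $\Delta(H)=\Delta$), invoke Vizing's adjacency lemma to see that every vertex of degree~$\Delta$ in $H$ has at least two neighbours of degree~$\Delta$ in $H$, and hence the subgraph of $H$ induced on max-degree vertices has minimum degree $\geq 2$ and contains a cycle. Since degree-$\Delta$ vertices of $H$ are degree-$\Delta$ vertices of $G$, this cycle violates the hypothesis. The only real work, as you note, is the fan/Kempe-chain proof of the adjacency lemma, which you have outlined accurately.
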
 
In order to distinguish this result from Vizing's
classical theorem that $\chi'(G)\leq\Delta(G)+1$, we will always
refer to this statement as Theorem~\ref{vizing}.

In the case of multigraphs, the picture is less dichotomous. For a
multigraph $G=(V,E)$ let $\mu(e)$ denote the multiplicity of the pair
$e=\{u,v\}\in \binom V2$ in $G$, and let $\mu(G)=\max\{\mu(e)\mid e\in
\binom V2\}$ be the maximum edge multiplicity of $G$. Vizing's Theorem
\cite{vizing64} for multigraphs states that $\chi'(G)\leq \Delta(G)+\mu(G)$. 
In some cases, the chromatic index can be quite far from the maximum degree.
The classical theorem of Shannon \cite{shannon} states that
$\chi'(G)\leq \lfloor 3\Delta/2\rfloor$, and this is best possible as
can be seen from the graph with three vertices and $\lceil
\Delta/2\rceil$ or $\lfloor \Delta/2\rfloor$ edges joining each pair
of vertices. In this graph every two edges share a vertex, so we need
$\lfloor3\Delta/2\rfloor$ colours to colour the edges of $G$.

Since Theorem \ref{vizing} does not apply to  multigraphs, it is natural to try and find another graph theoretic parameter (besides $\Delta$ and $\mu$) connected to the chromatic index of a multigraph. 
A famous conjecture due to Goldberg (1973),  and also Seymour (1979), is based on
the following parameter. For a multigraph $G=(V,E)$ and $S\subseteq
V$,  let $\rho(S):=\frac {e(G[S])}{\lfloor |S|/2\rfloor}$. Define
$\rho(G)=\max \{\rho(S)\mid S\subseteq V \}$. Then $\chi'(G) \geq
\lceil \rho (G) \rceil$, as can be seen from the following 
argument. For any subset $S\subset V(G)$, every matching in
$G[S]$ has size at most $\lfloor|S|/2\rfloor$. Since every colour
class forms a matching, we need at least $\frac
{e(G[S])}{\lfloor|S|/2\rfloor}$ colours to colour $G[S]$ alone. This
observation gives a lower bound of $\max\{\Delta(G),\lceil \rho (G)
\rceil \}$ on the chromatic index of any
multigraph $G$.
 
 The prevalent belief is that the chromatic index of multigraphs
 should  essentially be determined by $\Delta(G)$ and
 $\rho(G)$. Goldberg \cite{Goldberg1} and independently Seymour
 \cite{Seymour} conjectured the following.
 
 \begin{conjecture}\label{conj:GS}
 	Let $G$ be a loopless multigraph. Then $\chi'(G)\leq \max\{\Delta(G)+1,
 	\lceil\rho(G)\rceil\}$.  
 \end{conjecture}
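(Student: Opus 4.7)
The plan is to argue by contradiction via an edge-minimal counterexample. Suppose $G$ is a loopless multigraph with $\chi'(G) > \max\{\Delta(G)+1,\lceil \rho(G)\rceil\}$ and $|E(G)|$ minimum, and write $k=\chi'(G)-1$, so $k\geq \Delta(G)+1$ and $k\geq \lceil \rho(G)\rceil$. By minimality $G$ is $k$-\emph{critical}: every proper subgraph admits a proper $k$-edge-colouring, but $G$ itself does not. The aim is to extract from this criticality a subset $S\subseteq V(G)$ so dense that $\rho(S)>k$, contradicting $k\geq\lceil\rho(G)\rceil$.

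First I would fix an arbitrary edge $e_{0}=x_{0}y_{0}$ and a proper $k$-edge-colouring $\varphi$ of $G-e_{0}$; since $\deg(x_{0}),\deg(y_{0})\leq \Delta(G)\leq k-1$, each endpoint has a $\varphi$-missing colour. To exploit this I would grow a \emph{Tashkinov tree} $T$ rooted at $e_{0}$: a sequence of edges $e_{0},e_{1},\dots,e_{\ell}$ with each $e_{i}=u_{i}v_{i}$ having $u_{i}$ in the vertex set $V(T_{i-1})$ built so far, $v_{i}$ outside, and $\varphi(e_{i})$ missing at some vertex of $T_{i-1}$. Standard Vizing-fan and Kierstead-path arguments, together with the criticality of $G$, would then be used to show that any maximal such $T$ is \emph{elementary} (every colour is missing at at most one vertex of $V(T)$) and \emph{closed} (every edge with exactly one end in $V(T)$ has its $\varphi$-colour already used at some vertex of $V(T)$).

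Once $T$ is elementary and closed, a direct counting argument compares $e(G[V(T)])$ to $k\cdot\lfloor|V(T)|/2\rfloor$: each of the $k$ colour classes meets $V(T)$ in a matching of size at most $\lfloor|V(T)|/2\rfloor$, and closedness forces those matchings together to cover every edge inside $V(T)$ together with enough boundary edges to account for the unique missing colours. With $k\geq\Delta(G)+1$ and $|V(T)|$ odd, this pushes $e(G[V(T)])$ strictly above $k\lfloor|V(T)|/2\rfloor$, giving $\rho(V(T))>k\geq\lceil\rho(G)\rceil$ and the desired contradiction.

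The main obstacle, and indeed the core difficulty of the conjecture, lies in forcing $|V(T)|$ to be large enough for the parity slack in the count to bite. A plain Tashkinov tree only yields the weaker bound $\chi'(G)\leq\max\{(11\Delta-8)/10,\lceil\rho(G)\rceil\}$. To close the gap I would perform carefully chosen Kempe exchanges on two-coloured alternating subgraphs to redistribute the missing colours, then restart the tree-growing procedure from the modified colouring in order to enlarge $V(T)$, iterating until either the elementary/closed conditions break (yielding structural information that contradicts criticality) or $V(T)$ becomes large enough to trigger the density contradiction. Controlling these augmentations so that both the elementary and the closed properties survive each exchange, and so that the process terminates with a truly dense $V(T)$, is where essentially new combinatorial ideas — along the lines of the structural characterisation of chromatic-index-critical multigraphs developed later in this paper — would have to be brought to bear.
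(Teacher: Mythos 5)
The statement you are asked about is Conjecture~\ref{conj:GS}, the Goldberg--Seymour conjecture. The paper does not prove it: it is stated as a famous open problem, and the paper only establishes it (in a stronger form) for \emph{random} multigraphs drawn from $M(n,m)$, by exploiting properties that hold there with high probability --- a large gap $d_1-d_2$ between the two top degrees and near-equal edge multiplicities $\mu\approx\mu_{min}$ --- which fail for general multigraphs (e.g.\ regular ones). So there is no ``paper's own proof'' to match, and any complete argument you give would be a proof of a major conjecture, not a reconstruction of something in this paper.

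Your proposal has two genuine gaps. First, the counting step is wrong as stated: from a maximal elementary closed Tashkinov tree $T$ one only gets that each colour \emph{missing at some vertex of} $V(T)$ induces a near-perfect matching of size $\frac{|V(T)|-1}{2}$ in $G[V(T)]$, so the lower bound on $e(G[V(T)])$ is $\frac{|V(T)|-1}{2}\bigl(2+\sum_{v\in V(T)}(k-d(v))+|E[X,V(T)\setminus Z]|\bigr)$ (this is exactly Lemma~\ref{eq} of the paper), and the quantity in parentheses need not come close to $k$; colours not missing on $V(T)$ give you no lower bound at all. Hence ``this pushes $e(G[V(T)])$ strictly above $k\lfloor|V(T)|/2\rfloor$'' does not follow, and indeed the bare Tashkinov method is known to yield only bounds such as $\chi'\leq\max\{\Delta+\sqrt{\Delta/2},\lceil\rho\rceil\}$. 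The paper gets the density conclusion (item (d) of Theorem~\ref{nnlapp}) only after ruling out alternatives (a)--(c), which requires the degree-gap and multiplicity hypotheses available in the random setting. Second, your final paragraph concedes that the mechanism for enlarging $V(T)$ while preserving elementarity and closedness is missing and that ``essentially new combinatorial ideas would have to be brought to bear'' --- that missing mechanism \emph{is} the conjecture (its eventual resolution by Chen, Jing and Zang runs to well over a hundred pages). As written, the proposal is a survey of the known approach plus an acknowledgement of the open gap, not a proof.
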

  Goldberg \cite{Goldberg2} even
 conjectured that if $\lceil\rho(G)\rceil\leq \Delta(G)-1$ then
 $\chi'(G)=\Delta(G)$. On the other hand, for the case $\Delta (G)\leq
 \rho (G)$, Kahn proved \cite{Kahn} that $\chi'(G)\leq (1+o(1))\lceil
 \rho (G) \rceil$. It was also independently conjectured by
 Andersen \cite{Andersen} and  Seymour \cite{Seymour2} that
 $\chi'(G)\in \{\Delta(G),\Delta(G)+1, \lceil\rho(G)\rceil\}$. In the
 past few years, there were several improvements for the upper bound
 on $\chi'(G)$ in terms of $\rho(G)$, see, e.g.,
 \cite{ChenYuZang,Goldberg1,Goldberg2,Haxell,Plantholt,Scheide,Seymour,T}. In
 \cite{ChenYuZang}, and independently \cite{Scheide}, it was shown
 that $\chi'(G)\leq \max\{\Delta+\sqrt{\frac \Delta 2},
 \lceil\rho(G)\rceil \}$. A very recent breakthrough by Chen, Gao, Kim,
 Postle, and Shan~\cite{CGKPS} proves the   best known upper bound of
 $\chi'(G)\leq \max\{\Delta+\sqrt[3]{\frac \Delta 2},
 \lceil\rho(G)\rceil \}$. For a more thorough history of this important problem see~\cite{S}.
 
 One of the main questions in the area of edge colouring is to
 understand which multigraphs have an upper bound for the chromatic
 index that matches the trivial lower bound of
 $\max\{\Delta,\lceil\rho\rceil \}$. This leads to a classification of
 multigraphs with respect to the chromatic index. Similarly to the
 graph case, we would like to 	 
 distinguish between multigraphs whose chromatic index is the same as the trivial lower bound, and those that do not have this property. We say that a multigraph $G$ is \textit{first class} if $\chi'(G)= \max\{\Delta(G),\rho(G)\}$, and otherwise it is \textit{second class}. 
  As proposed in~\cite{S}, it seems natural to expect that almost all multigraphs are first class
  (see  p.186 in \cite{S} for further discussion).

In  this paper we will discuss this problem in the setting of
\textit{random graphs} and \textit{random multigraphs}.  
The two most common random graph models are the Erd\H{o}s-R\'enyi
model $G(n,m)$ (which is the probability space consisting of all
graphs with $n$ labeled vertices and $m$ edges, in which $m$ different
elements from $\binom{[n]}{2}$ are chosen uniformly, one by one,
without repetitions), and the binomial random graph model $G(n,p)$
(the probability space consisting of all graphs with $n$ labeled
vertices, where each one of the $\binom{n}{2}$ possible edges is
included independently with probability $p$).   For the most important
case $p=\frac 12$, Erd\H{o}s and Wilson proved \cite{ErdWil} that a
typical $G\sim G(n,\frac 12)$ is  Class 1. They did it by showing that
with high probability (w.h.p.) $G\sim G(n,\frac 12)$ contains a unique
vertex of maximum degree and then by invoking Vizing's Theorem
(Theorem~\ref{vizing}).   Frieze, Jackson, McDiarmid and Reed
\cite{FriezeJackMcReed} strengthened this result for every constant
$p$, $0<p<1$, and showed that the probability that $G\sim G(n,p)$ is
Class 1 tends to one extremely quickly (concretely, is equal to
$1-n^{-\Theta(n)}$).   
%
%

Here we first show that the Erd\H{o}s-Wilson result can be extended to sparse random graphs.

\begin{theorem}\label{thm:Gnp}
	Let $n$ be sufficiently large integer and  $p:=p(n)=o(1)$. Then \whp $G\sim G(n,p)$ is Class 1.
\end{theorem}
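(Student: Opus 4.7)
The plan is to apply Theorem~\ref{vizing}, which says that if every cycle of $G$ contains a vertex of degree strictly less than $\Delta(G)$, then $\chi'(G) = \Delta(G)$. Setting $\Delta := \Delta(G)$ and $U := \{v \in V(G) : d_G(v) = \Delta\}$, the goal reduces to showing that, whp, the induced subgraph $G[U]$ is acyclic. In fact, I will aim for the strictly stronger statement that whp $G[U]$ has no edges at all, via a first-moment argument.

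The key step is to choose an integer threshold $\Delta^* = \Delta^*(n,p)$, slightly below the typical maximum degree, and to set $U^* := \{v : d_G(v) \geq \Delta^*\}$ and $q := \Pr[\mathrm{Bin}(n-1,p) \geq \Delta^*]$. I will pick $\Delta^*$ so that $nq \to \infty$ and $n^2 p q^2 \to 0$. The first condition, together with a standard second-moment computation on $|U^*|$, ensures $\Delta \geq \Delta^*$ whp, and hence $U \subseteq U^*$ whp. The second condition controls the expected number of edges in $G[U^*]$: for any pair $\{u,v\}$, conditioning on $\{u,v\} \in E(G)$, the edges from $u$ and $v$ to $V \setminus \{u,v\}$ form two disjoint, independent Binomial samples, so $d(u)$ and $d(v)$ are conditionally independent, each distributed as $1 + \mathrm{Bin}(n-2,p)$. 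Consequently
\[
\Pr\bigl[\{u,v\} \in E(G) \text{ and } u,v \in U^*\bigr] \;=\; p \cdot \bigl(\Pr[\mathrm{Bin}(n-2,p) \geq \Delta^* - 1]\bigr)^2 \;=\; O(p q^2),
\]
so $\mathbb{E}[e(G[U^*])] = O(n^2 p q^2) = o(1)$, and Markov's inequality gives $e(G[U^*]) = 0$ whp. Combined with $U \subseteq U^*$, this shows $G[U]$ is edgeless whp and Theorem~\ref{vizing} completes the argument.

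It remains to verify that $\Delta^*$ with the required properties exists for every $p = o(1)$. The conditions $nq \to \infty$ and $n^2 p q^2 \to 0$ amount to $q \in (\omega(1/n),\, o(1/(n\sqrt p)))$, an interval of multiplicative width $1/\sqrt p \to \infty$. Since the per-step ratio $q(k)/q(k+1)$ is approximately $k/(np)$ in the upper tail of $\mathrm{Bin}(n-1,p)$, for $k$ near the typical maximum degree this ratio is much smaller than $1/\sqrt p$ whenever $p = o(1)$, so an integer $\Delta^*$ meeting both conditions indeed exists. The main obstacle is carrying this out uniformly across the whole range $p = o(1)$: when $np \gg \log n$ the maximum degree is sharply concentrated around $(n-1)p + \sqrt{2(n-1)p \log n}$ and the construction of $\Delta^*$ is straightforward, but in the sparser regime $np = O(\log n)$ (and especially $np = O(1)$) the binomial tail has larger multiplicative jumps between consecutive integers. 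In that regime one may instead use the weaker ``no cycles'' first-moment bound $\sum_{\ell \geq 3} (npq)^\ell/(2\ell) = o(1)$, which holds under the less restrictive hypothesis $npq = o(1)$ and admits a wider choice of $\Delta^*$; verifying that this covers the remaining $p = o(1)$, possibly by splitting into a couple of sub-cases, is the principal technical task.
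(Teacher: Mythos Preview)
Your approach is correct and is essentially the paper's: choose a degree threshold $\Delta^*$ so that the set $U^*$ of vertices above it is (i) nonempty whp by a second-moment calculation and (ii) independent whp by a first-moment bound on edges inside it, then apply Theorem~\ref{vizing}. The paper implements this only in the middle range $\tfrac{1}{n\log\log n}\le p\le \tfrac{\log^2 n}{n}$, with the concrete choice $q\approx n^{-0.9}$, and handles the extremes by shortcuts: for $p=\omega(\tfrac{\log n}{n})$ it quotes the known fact that $G(n,p)$ whp has a unique vertex of maximum degree (Theorem~\ref{GnpUniMax}), and for $p=o(\tfrac{1}{n})$ it notes that $G(n,p)$ is whp a forest (Claim~\ref{GnpForest}). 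Your more uniform treatment works too, and your fallback of bounding cycles in $G[U^*]$ when $npq=o(1)$ is a valid substitute for the forest shortcut; the paper's case split simply spares you from tracking the threshold, and the polylog losses hidden in your ``$=O(pq^2)$'' step, across the full range of $p$.
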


A similar proof will lead to the same result for the model $G(n,m)$, where $m=o(n^2)$.

After having dealt with the chromatic index of random graphs, the next
question  to ask is if the situation remains the same in
random multigraphs. In particular, how does the chromatic index of a
random multigraph $G$ behave typically with respect to the key
parameters $\Delta(G)$ and $\rho(G)$? For this, we need first to set
up a probability space for sampling multigraphs.  
A natural way to define a random multigraph model is to allow edge repetitions in the standard random graph  model $G(n,m)$.  
Let $n$ and $m$ be integers. The model $M(n,m)$ is the probability space consisting of all loopless multigraphs with $n$ vertices and $m$ edges, in which $m$ elements from $\binom {[n]}2$ are chosen 
independently at random with repetitions. 

As far as we know, no work has been done to determine the chromatic
index of a random multigraph (in any random multigraph model). For further
discussion on $M(n,m)$ and other models of random multigraphs see Section~\ref{sec:con}.
In this paper, we prove that  the chromatic index of a typical $M\sim M(n,m)$ is either $\Delta(M)$ or $\lceil\rho(M)\rceil$.

\begin{theorem}\label{GSforRM}
	Let $n$ be an integer and $m:=m(n)$. Let $M\sim M(n,m)$.  Then \whp $\chi'(M)=\max \{\Delta(M),\lceil\rho(M)\rceil\}$.
\end{theorem}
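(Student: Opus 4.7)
The plan is to split into cases according to whether $\rho(M)$ typically exceeds $\Delta(M)$, which by a preliminary concentration analysis happens exactly when $n$ is odd and $m\ge (1+\varepsilon)n^3\log n$. Each vertex of $M(n,m)$ has degree distributed as $\Bin(m,2/n)$ with expectation $2m/n$, so standard tail bounds yield $\Delta(M)=2m/n+\Theta(\sqrt{(m/n)\log n})$ \whp; on the other hand $\rho(M)\ge \rho(V)=m/\lfloor n/2\rfloor$, which for $n$ odd exceeds $2m/n$ by $\Theta(m/n^2)$. Comparing these two fluctuations gives the stated threshold, and an auxiliary first-moment estimate rules out $\rho(S)$ being larger for any proper $S\subsetneq V$ or (when $n$ is even) for any $S$ at all.

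For the bulk regime, where \whp $\rho(M)\le \Delta(M)$ and so $\max\{\Delta(M),\lceil\rho(M)\rceil\}=\Delta(M)$, the goal becomes to show $\chi'(M)=\Delta(M)$. The key tool would be a new Vizing-type structural characterization, analogous to Theorem~\ref{vizing}: if a multigraph $G$ satisfies $\chi'(G)>\Delta(G)$, then $G$ contains an explicit combinatorial obstruction $\mathcal F$ built from the vertices of maximum degree and the multi-edges. Granted such a theorem, the remaining task is a first-moment argument in $M(n,m)$: bound the expected number of copies of $\mathcal F$, exploiting that a multi-edge of multiplicity $\mu$ at a given location has probability at most $\binom{m}{\mu}(2/(n(n-1)))^\mu$, so high multiplicities are rare, and that the maximum-degree vertices are typically few and spread out.

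For the dense odd regime ($n$ odd, $m\ge(1+\varepsilon)n^3\log n$), \whp $\rho(M)>\Delta(M)$, the lower bound $\chi'(M)\ge\lceil\rho(M)\rceil$ is automatic, and only the matching upper bound is needed. I would obtain this from the recent result of Chen, Gao, Kim, Postle and Shan~\cite{CGKPS}, which gives $\chi'(G)\le\max\{\Delta(G)+\sqrt[3]{\Delta(G)/2},\lceil\rho(G)\rceil\}$: the concentration estimates above show that \whp $\lceil\rho(M)\rceil-\Delta(M)=\Theta(n\log n)$ while $\sqrt[3]{\Delta(M)/2}=O((n^2\log n)^{1/3})=o(n\log n)$, so the $\lceil\rho\rceil$ branch of the maximum dominates and $\chi'(M)\le\lceil\rho(M)\rceil$ as required.

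The main obstacle is the structural characterization itself. Theorem~\ref{vizing} reduces $\chi'=\Delta$ for simple graphs to acyclicity on the set of maximum-degree vertices, a condition extremely convenient for random graph arguments, but this formulation fails completely for multigraphs: even a single pair of vertices joined by sufficiently many parallel edges can force $\chi'>\Delta$ without creating any cycle among maximum-degree vertices. One must therefore produce a replacement theorem that simultaneously controls high-degree vertices and high-multiplicity edges, presumably via a careful analysis of edge-chromatic-critical multigraphs extending Vizing-type adjacency lemmas, and the forbidden structure $\mathcal F$ must be sparse enough that its expected count in $M(n,m)$ tends to zero under the regime considered.
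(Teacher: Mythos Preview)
Your use of the Chen--Gao--Kim--Postle--Shan bound for the dense odd regime is valid: once $m\ge(1+\varepsilon)n^3\log n$ one indeed has $\lceil\rho(M)\rceil\ge\Delta(M)+\sqrt[3]{\Delta(M)/2}$ \whp, so the $\lceil\rho\rceil$ branch of their maximum dominates. But this leaves the critical window $m\sim n^3\log n$ with $n$ odd untouched: there neither is $\chi'=\Delta$ forced (since $\lceil\rho\rceil$ may already exceed $\Delta$), nor does the CGKPS gap close. More seriously, the entire ``bulk'' part of your plan rests on a structural theorem you openly do not have, and the shape you conjecture for it --- a fixed forbidden configuration $\mathcal F$ to be killed by a first-moment estimate --- is not how the paper proceeds and may not be the right target.

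What the paper actually proves, via Tashkinov trees, is a pair of \emph{degree-gap} criteria rather than a forbidden-subgraph criterion. The simpler one (Lemma~\ref{lem:Chi'Tash}) says that if every vertex but one has degree at most $\Delta-t$ with $t\ge\max\{2,\mu(G)\}$, then $\chi'(G)=\Delta(G)$; the sharper one (Theorem~\ref{nnlapp}) says that for odd $n$, if $\chi'(G)>k$ then one of $k\le d_1-1$, $k\le d_2+2$, $9\mu-24>10\mu_{\min}$, or $|E(G)|>\tfrac{n-1}{2}k$ must hold. The decisive random-multigraph input --- which your plan never mentions --- is that \whp the gap between the two largest degrees satisfies $d_1-d_2=\Omega\bigl(\sqrt{m/(n\log n)}\bigr)$ (Lemma~\ref{lem0:DegDiff}). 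For $n\log^9 n\le m\le n^2\log^2 n$ this gap already exceeds $\mu(M)$, so Lemma~\ref{lem:Chi'Tash} gives $\chi'=\Delta$ directly; for $m=\omega(n^2\log n)$ with $n$ even one first peels off $\mu_{\min}$ copies of $K_n$ (Corollary~\ref{cor:RemoveCopiesKn}) so that only the much smaller quantity $\mu-\mu_{\min}$ needs to be beaten; and for $m=\omega(n^2\log n)$ with $n$ odd, Theorem~\ref{nnlapp} applied with $k=\chi'(M)-1$ handles the whole range uniformly, including the critical window: the degree gap kills option $(b)$, multiplicity concentration kills $(c)$, and options $(a)$, $(d)$ give exactly $\chi'=\Delta$ or $\chi'=\lceil\rho\rceil$. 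Only for $m\le n\log^{100}n$ does the paper do something in the spirit of your first-moment picture (multiplicities at most~$2$, high-degree vertices and double edges well separated, then a matching removal reduces to Theorem~\ref{vizing}).
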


 Thus we essentially confirm the informal conjecture stated in \cite{S} (see  p.186), and in fact show that Conjecture~\ref{conj:GS} holds for random multigraphs,  even in a stronger form.
%
%

In order to determine $\chi'$ precisely, we distinguish between two cases, the case where $n$ is even and where $n$ is odd.

\begin{theorem}\label{thm:nEven}
	Let $n$ be an even integer and let $m:=m(n)$. Let $M\sim M(n,m)$, then \whp $\chi'(M)=\Delta(M)$.
\end{theorem}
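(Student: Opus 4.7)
By Theorem~\ref{GSforRM}, we may assume \whp that $\chi'(M) = \max\{\Delta(M), \lceil\rho(M)\rceil\}$; therefore it suffices to establish $\Delta(M) \geq \rho(S)$ for every $S \subseteq V$ \whp. Since $2e(M[S]) = \sum_{v \in S} d_{M[S]}(v) \leq |S|\Delta(M)$, the bound $\rho(S) = e(M[S])/\lfloor|S|/2\rfloor \leq \Delta(M)$ is automatic whenever $|S|$ is even; because $n$ is even this handles $S = V$ (yielding $\rho(V) = 2m/n \leq \Delta(M)$ by averaging). Thus I am reduced to showing $\rho(S) \leq \Delta(M)$ for every $S$ with $|S| = s$ odd and $3 \leq s \leq n-1$.

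For such $S$, set $D(S) = \sum_{v \in S} d_M(v)$ and $e(S,\bar S) = e_M(S, V\setminus S)$. The identity $2e(M[S]) = D(S) - e(S,\bar S)$ rewrites the target $\rho(S) \leq \Delta(M)$ as
\[
e(S, \bar S) + \bigl(s\,\Delta(M) - D(S)\bigr) \;\geq\; \Delta(M),
\]
with both summands on the left nonnegative. I would split on $s$. For $3 \leq s \leq n-3$, $e(S,\bar S)$ has expectation $\asymp \tfrac{2m\,s(n-s)}{n(n-1)}$, which dominates $\Delta(M) \approx 2m/n$ by a factor of at least $3$, so a Chernoff bound on $e(S,\bar S)$ combined with a union bound over the $\binom{n}{s}$ choices of $S$ does the job, since the exponential Chernoff saving absorbs the binomial count whenever $m$ is not too small. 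The case $s = n-1$ has $\bar S = \{v\}$ a single vertex, and the displayed inequality (taking $v$ of minimum degree) reduces to
\[
(n-2)\bigl(\Delta(M) - \bar d\bigr) \;\geq\; 2\bigl(\bar d - \delta(M)\bigr),
\]
where $\bar d = 2m/n$; this follows at once from concentration of the binomial degree sequence, which makes $\Delta(M) - \bar d$ and $\bar d - \delta(M)$ of the same order $\Theta(\sqrt{\bar d \log n})$, while the prefactor $(n-2)/2 \gg 1$ delivers ample slack.

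The remaining regime is very small $m$. When $m = o(n)$, the $m$ chosen pairs are pairwise distinct \whp, so $M$ is a simple graph and Theorem~\ref{thm:Gnp} closes the case directly. For intermediate $m$ in which multi-edges appear but concentration is still too weak for the bounds above, a first-moment estimate on small dense subgraphs together with a standard tail bound on the maximum edge multiplicity plugs the gap. The main obstacle is uniformity: the Chernoff-plus-union step in the range $3 \leq s \leq n-3$ requires the exponent to be retuned as $s$ varies from constant to linear in $n$, and the $s = n-1$ case demands a genuinely sharp two-sided control of $\Delta(M)$ and $\delta(M)$ that must also accommodate the mild dependence between degrees induced by sampling with replacement. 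For very small $m$ the combinatorial structure of $M$ must instead be analyzed directly, and stitching these pieces together into a single statement valid for an arbitrary $m = m(n)$ is the principal technical hurdle.
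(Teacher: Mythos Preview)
Your reduction via Theorem~\ref{GSforRM} is circular as the paper is organized: for even $n$, the paper establishes Theorem~\ref{GSforRM} \emph{by} proving Theorem~\ref{thm:nEven} directly (see the opening paragraph of Section~4 and Section~\ref{sec:nEven}). The only Tashkinov-tree result in the paper that yields $\chi'(M)\in\{\Delta,\lceil\rho\rceil\}$ without already knowing $\chi'=\Delta$ is Theorem~\ref{nnlapp}, and that theorem is stated and proved only for \emph{odd} $n$; its conclusion~(d), the one that pins the obstruction on $\lceil\rho\rceil$, rests on the ``moreover'' clause of Lemma~\ref{nnl}(4), which genuinely needs $n$ odd (so that no colour can be missing at exactly two vertices, forcing all of $V(G)$ to be $\phi$-elementary when $|X|\le1$). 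So you cannot take Theorem~\ref{GSforRM} as a black box here; you would first have to supply an even-$n$ analogue of Theorem~\ref{nnlapp}, and that is not in the paper.

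The paper's route for even $n$ is quite different from yours and never bounds $\rho$ at all. For $m\le n\log^{100}n$ it strips a carefully chosen matching so as to land in a simple graph whose top-degree vertices are independent, and invokes Theorem~\ref{vizing}. For $n\log^{9}n\le m\le n^{2}\log^{2}n$ it shows (Lemma~\ref{lem0:DegDiff} together with Observation~\ref{obs:multiplicity}) that the gap $d_1-d_2$ exceeds $\max\{2,\mu(M)\}$ and applies Lemma~\ref{lem:Chi'Tash} directly. For $m=\omega(n^{2}\log n)$ it shows $d_1-d_2\ge\mu(M)-\mu_{\min}$ and applies Corollary~\ref{cor:RemoveCopiesKn}: peel off $\mu_{\min}$ copies of $K_n$ (each $(n-1)$-edge-colourable because $n$ is even), and colour the remainder by Lemma~\ref{lem:Chi'Tash}. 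In every regime the engine is the degree gap $d_1-d_2$ feeding a direct colouring lemma, not a comparison of $\rho$ with $\Delta$. Your density estimates for $\rho(S)\le\Delta(M)$ look plausible in isolation, but they only convert into $\chi'=\Delta$ once an independent proof of $\chi'\le\max\{\Delta,\lceil\rho\rceil\}$ for even $n$ is in hand, and that is exactly the piece the paper does not provide separately.
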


The following is a corollary of Theorem \ref{GSforRM} (here and later, we denote by $\log n$ the natural logarithm).

\begin{corollary}\label{cor:oddN}
	Let $\varepsilon>0$, and let $M\sim M(n,m)$ where $n$ is an odd integer. Then the following hold.
	\begin{enumerate}
		\item If $m\leq (1-\varepsilon)n^3\log n$ then \whp $\chi'(M)=\Delta(M)$.
		\item If $m\geq (1+\varepsilon)n^3\log n$ then \whp $\chi'(M)=\lceil \rho(M)\rceil$.
	\end{enumerate}
\end{corollary}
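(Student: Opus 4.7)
The plan is to apply Theorem~\ref{GSforRM}, which gives $\chi'(M) = \max\{\Delta(M), \lceil \rho(M) \rceil\}$ \whp, and then to compare $\Delta(M)$ and $\lceil \rho(M) \rceil$ sharply in each regime. The key observation is that, since $n$ is odd, taking $S=V$ gives the deterministic lower bound
\[
\rho(M) \;\geq\; \rho(V) \;=\; \frac{2m}{n-1} \;=\; \frac{2m}{n} + \frac{2m}{n(n-1)},
\]
so $\rho(V)$ already exceeds the mean vertex degree $2m/n$ by roughly $2m/n^2$. The entire corollary thus reduces to pitting this deterministic surplus against the random fluctuation of $\Delta(M)$.

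The first step is a sharp two-sided estimate for $\Delta(M)$ in the relevant range $m\gg n$. Each $\deg_M(v)$ is distributed as $\Bin(m,2/n)$ with mean $2m/n$, so a Chernoff bound and a union bound over $v\in[n]$ yield that \whp
\[
\Delta(M) \;\leq\; \frac{2m}{n} + (2+o(1))\sqrt{\frac{m\log n}{n}}.
\]
For the matching lower bound, I would set $t=\tfrac{2m}{n}+c\sqrt{m\log n/n}$ for some $c<2$ (to be chosen close to $2$), let $Y$ count vertices with $\deg_M(v)\geq t$, and use a Gaussian-type tail estimate for the binomial to get $\mathbb{E}[Y]=n^{1-c^2/4+o(1)}\to\infty$; a second-moment argument, exploiting the fact that vertex degrees are only weakly (negatively) correlated, then gives $Y\geq 1$ \whp, so $\Delta(M)\geq t$.

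The second step is to show that \whp $\rho(M)=\rho(V)$. For each $S=V\setminus R$ with $|R|=k\geq 1$, write $e(M[S])=m-X_R$, where $X_R$ counts the edges meeting $R$ and is distributed as $\Bin(m,p_k)$ with $p_k\approx 2k/n$. A direct calculation shows that $\rho(S)\geq\rho(V)$ forces $X_R\leq mk/(n-1)$, essentially half of $\mathbb{E}[X_R]\approx 2mk/n$. Chernoff then gives the per-subset bound $\exp(-\Omega(mk/n))$, which easily absorbs the $\binom{n}{k}$ subsets in a union bound whenever $m\gg n\log n$; for smaller $m$ both sides of the comparison $\Delta(M)\geq\lceil\rho(M)\rceil$ are $O(\log n)$, and the corollary reduces to a direct case check.

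Combining the estimates, the fluctuation $\Delta(M)-2m/n=(2\pm o(1))\sqrt{m\log n/n}$ and the surplus $\rho(V)-2m/n=2m/(n(n-1))$ cross exactly at $m=n^3\log n$. For $m\leq(1-\varepsilon)n^3\log n$ the fluctuation wins (since $\sqrt{1-\varepsilon}>1-\varepsilon$), so $\Delta(M)\geq\rho(V)$ and hence $\chi'(M)=\Delta(M)$; for $m\geq(1+\varepsilon)n^3\log n$ the surplus wins, so $\rho(V)>\Delta(M)$ and hence $\chi'(M)=\lceil\rho(M)\rceil$. The main technical obstacle is matching the leading constant~$2$ on both sides of the concentration of $\Delta(M)$: this constant, weighed against the~$2$ in the expansion $\rho(V)-2m/n=2m/(n(n-1))$, is exactly what pins the threshold at $m=n^3\log n$, and it is the narrow gap between $\sqrt{1\pm\varepsilon}$ and $1\pm\varepsilon$ that separates the two regimes.
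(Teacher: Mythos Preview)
Your proposal is essentially the paper's own argument: invoke Theorem~\ref{GSforRM}, show that \whp $\rho(M)=\rho(V)=\frac{2m}{n-1}$, establish the two-sided estimate $\Delta(M)=\frac{2m}{n}+(2\pm o(1))\sqrt{\tfrac{m\log n}{n}}$, and compare the fluctuation $2\sqrt{m\log n/n}$ to the surplus $2m/n^2$. The paper cites Claim~\ref{claim:rho} and Claim~\ref{lem0:MaxDeg2} (together with the $d_-$ machinery) for these two ingredients, which are exactly the second-moment and Chernoff computations you outline. One cosmetic difference: the paper's proof of $\rho(M)=\rho(V)$ (Claim~\ref{claim:rho}) goes via the concentration of all edge multiplicities (Observation~\ref{obs:multiplicity}) rather than your direct Chernoff bound on $X_R$; both arguments work in the regime $m=\omega(n^2\log n)$.

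The one place your sketch is too casual is the small-$m$ range. For $m\leq n^2\log^2 n$ the paper does \emph{not} compare $\Delta$ to $\rho$ at all; it simply quotes the earlier subsections (Sections~\ref{sec:m<nlogn}--\ref{sec:m<n2logn}), which prove $\chi'(M)=\Delta(M)$ directly via Vizing's theorem and Lemma~\ref{lem:Chi'Tash}. Your phrase ``the corollary reduces to a direct case check'' hides real content: deducing $\chi'=\Delta$ from Theorem~\ref{GSforRM} requires $\Delta\geq\lceil\rho\rceil$, and for sparse $M$ this is not a one-line verification (indeed $\rho$ on a triangle could approach $3\Delta/2$). You should either cite those sections or restrict your comparison argument to $m=\omega(n^2\log n)$, as the paper does.
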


For proving the above theorems, we use Vizing's Theorem (Theorem
\ref{vizing}) and  properties of random (multi)graphs. For larger
values of $m$, we make an extensive use of the method of \emph{Tashkinov
trees} 
(see Section \ref{sec:tash}). This method was introduced by Tashkinov
\cite{T}
in 2000 to address Conjecture~\ref{conj:GS}. In particular, we give a new necessary
condition, of independent interest, for a multigraph to have a large
chromatic index in terms of the two largest degrees and the min/max
edge-multiplicities.

\begin{remark}\label{reIntro}
Our results are in fact algorithmic in the sense that there exists an efficient (polynomial time) algorithm that \whp finds an optimal edge-colouring in a random multigraph drawn from $M(n,m)$. We refer the reader to Remark~\ref{re:TashAlg} and Section~\ref{sec:alg} for more details.
\end{remark}

%
%
%
%

\vspace{3mm}
\subsection{Notation and terminology}

For every positive integer $k$ we use $[k]$ to denote the set $\{1,2,\ldots,k\}$. 
We also write $x \in y \pm z$ for $x \in [y - z, y + z]$.
To avoid confusion, we usually use $e$ for an edge in the (multi)graph, and $\rm{e}$ for the Euler's constant.

Our graph-theoretic notation is standard and follows that
of~\cite{West}. In particular, we use the following.


In a multigraph $G=(V,E)$, $V$ is the set of vertices, and $E$ is a
multiset of elements from $\binom {V}2$.  
Let $d_G(v)=|\{e\in E(G) \mid v\in e \}|$
denote the degree of $v$ in $G$ (including multiplicities). We denote by $\Delta(G)$ and
$\delta(G)$ the maximum degree and the minimum degree in $G$,
respectively. 
We let $\mu(G):=\mu_{max}(G)$ be the maximum edge-multiplicity of $G$, and $\mu_{min}:=\mu_{min}(G)$ the minimum edge-multiplicity of $G$.

For  a set of vertices $U \subseteq V(G)$, we denote by $G[U]$
the corresponding vertex-induced subgraph of $G$, we denote by
$E_G[U]$ the edges of $G[U]$, and write $e_G(U)=|E_G[U]|$. 
For two vertex sets $U,W\subseteq V$ we denote by $E_G[U,W]$ all the edges
$e\in E$ with both endpoints in $U\cup W$ for which $e\cap U\neq
\emptyset$ and $e\cap W\neq \emptyset$. Let $e_G(U,W)= |E_G(U,W)|$.
 Often, when there is no risk of ambiguity, we
omit the subscript $G$ in the above notation.

For a graph $G$ and an edge $e\in E(G)$ (respectively, a vertex $v\in V(G)$) we write $G-e$ (respectively, $G-v$) to refer to the subgraph of $G$ obtained by removing the edge $e$ (respectively, the subgraph of $G$ obtained by removing the vertex $v$).
A multigraph $G$ is said to be $k$-{\it critical} if $\chi'(G)=k+1$ and $G-e$ is
$k$-edge-colourable for every edge $e$. 
For an edge colouring $\phi:E\to C$ of $G=(V,E)$, we say that a colour $c$ is \textit{missing} at a vertex $v\in V$ if $v\notin e$ for every $e\in E$ such that $\phi(e)=c$. We say that $c$ is a \textit{partial edge colouring} if the domain of $\phi$ is a (proper) subset of $E$ (that is, not all the edges are coloured according to $\phi$). We
denote by $E(\phi)$ the set of edges of $G$ that are coloured by
$\phi$, and write $|\phi|=|E(\phi)|$.


We assume that $n$ is large enough where needed.  We say that an event holds \emph{with high probability} (w.h.p.) in an underlying probability space if its probability  tends to one as $n$ tends to infinity.  We  sometimes omit floor
and ceiling signs whenever these are not crucial.

\vspace{3mm}
\subsection{Organization of the paper}
 In the next section we present some auxiliary results, definitions and technical preliminaries; in particular, in Section \ref{sec:tash} we present the tool of Tashkinov trees and prove  useful lemmas regarding the chromatic index of a (general) multigraph, and in Section~\ref{sec:Prop} we prove some properties of random graphs and random multigraphs.   In Section~3 we prove Theorem~\ref{thm:Gnp}. In Section 4 we prove Theorems \ref{GSforRM},    \ref{thm:nEven}, and Corollary \ref{cor:oddN}. In Section~\ref{sec:alg} we discuss the algorithmic issues mentioned in Remark~\ref{reIntro}, and in Section~\ref{sec:con} we make some concluding remarks.

\vspace{5mm}
\section{Tools}

\subsection{Sufficient conditions for $k$-colourability and Tashkinov trees}\label{sec:tash}

The aim of this section is to prove the following theorem that gives a sufficient condition (in terms of the min/max edge-multiplicities, the two largest degrees, and the number of edges) for a multigraph to be $k$-colourable. 
Its proof depends heavily on the method of Tashkinov trees.

\begin{theorem}\label{nnlapp}
	Let $G$ be a multigraph with an odd number $n$ of vertices and 
	degree sequence $d_1\geq d_2\geq\ldots\geq d_n$, minimum multiplicity
	$\mu_{min}$ and maximum multiplicity $\mu$. Suppose
	$\chi'(G)>k$. Then one of the following holds.
	\begin{itemize}
		\item[(a)] $k\leq d_1-1$,  
		\item[(b)] $k\leq d_2+2$,
		\item[(c)] ${9\mu-24}>10\mu_{min}$,
		\item[(d)] $|E(G)|>\frac{(n-1)}2k$.
	\end{itemize}
\end{theorem}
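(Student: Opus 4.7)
My plan is to argue the contrapositive: assume $k \geq d_1$, $k \geq d_2+3$, $9\mu-24 \leq 10\mu_{min}$, and $e(G) \leq (n-1)k/2$, and derive $\chi'(G) \leq k$. If $\chi'(G) > k$, I would extract an edge-minimal $k$-critical subgraph $H \subseteq G$, so that $\chi'(H) = k+1$, every edge of $H$ is critical, and $|V(H)|$ is odd by the standard parity argument for critical multigraphs. Since $k \geq d_1 \geq \Delta(H)$, the Tashkinov-tree machinery developed in the preceding parts of Section~\ref{sec:tash} applies to $H$.

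Next I would fix a critical edge $e_0 = xy \in E(H)$ and a $k$-edge-coloring $\phi$ of $H - e_0$, then grow a maximal Tashkinov tree $T$ at $e_0$. The lemmas proved just before this theorem should yield that $V(T)$ is \emph{elementary} (distinct vertices of $V(T)$ miss disjoint sets of colors under $\phi$) and \emph{strongly closed} (every color missed at some vertex of $V(T)$ is used only on edges of $H[V(T)]$), that $t := |V(T)|$ is odd, and that $H[V(T)]$ is $k$-overfull, i.e.\ $e(H[V(T)]) > k(t-1)/2$.

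Now comes the counting. Elementariness forces $\sum_{v \in V(T)}(k - d_{H-e_0}(v)) \leq k$, i.e.\ $\sum_{v \in V(T)} d_H(v) \geq k(t-1)+2$. Condition (b) caps all but one $d_H$-degree in $V(T)$ at $k-3$, so $\sum_{v \in V(T)} d_H(v) \leq k + (t-1)(k-3)$; combining the two bounds yields $t \leq (k+1)/3$. On the other hand, overfullness together with the trivial upper bound $e(H[V(T)]) \leq \binom{t}{2}\mu$ gives $t\mu > k$. The final move is to use condition (c), which forces $\mu$ and $\mu_{min}$ to lie close together and so controls the spread of multiplicities on $H[V(T)]$, and to plug this into the overfullness inequality together with $t \leq (k+1)/3$ to produce a numerical contradiction tuned by the specific constants $9, 10, 24$ in (c).

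The hard part, and what I take to be the new contribution of the paper, is establishing the elementary/strongly-closed/overfull package at the boundary $k = \Delta(H)$ rather than $k \geq \Delta(H)+1$, which is where classical Tashkinov-tree theorems live. The balanced-multiplicity hypothesis (c), with its explicit constants, is exactly the lever that allows the Tashkinov-tree extension step to succeed at $k = \Delta(H)$, and this extension is the structural characterization of multigraphs with $\chi' > \Delta$ promised in the abstract. Once that package is in hand, the arithmetic above is essentially a calibrated counting exercise.
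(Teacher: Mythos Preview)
Your outline has a genuine gap at the counting step, and it also misreads the role of condition~(c).

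Grant for the moment your elementary/closed/overfull package. Combining $t\leq(k+1)/3$ with overfullness $e(H[V(T)])>k(t-1)/2$ and the trivial bound $e(H[V(T)])\leq\binom{t}{2}\mu$ yields only $t\mu>k$, hence $\mu>3k/(k+1)$, so $\mu\geq 3$. This is compatible with $9\mu-24\leq 10\mu_{min}$ whenever $\mu_{min}\geq 1$, and your sketch offers no mechanism for bringing $\mu_{min}$ in: $\mu_{min}(H)$ may well be zero after passing to a critical subgraph, and $\mu_{min}(G)$ gives no lower bound on edges \emph{inside} $V(T)$ in $H$. Local overfullness of $V(T)$ simply cannot deliver the global statement~(d) about $e(G)$ without further input. (As a secondary point, the overfullness claim itself is not a consequence of the lemmas proved before the theorem: the paper does not show $V(T)$ is strongly closed at $k=\Delta$, and indeed defective colours may exist, so colour classes need not restrict to near-perfect matchings of $V(T)$.)

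The paper supplies the missing idea. Lemma~\ref{nnl} produces, alongside $Q=V(T)$, a subset $Z\subset Q$ of vertices whose missing colours are all used on $T$, an enlarged elementary set $U\supseteq Q$ obtained by following defective colours out of $Q\setminus Z$, and $X=V(G)\setminus U$. The crucial inequality Lemma~\ref{nnl}(3) carries the term $|E_G[X,Q\setminus Z]|\geq |X|\,(|Q|-|Z|)\,\mu_{min}$, computed in $G$; this is where $\mu_{min}$ enters. Dividing by $\binom{|Q|}{2}$ and using $|Z|\leq(|Q|+1)/3$ (a consequence of (a) and (b)) one gets $|X|<\frac{9\mu-24}{5\mu_{min}}$, and now (c) forces $|X|\leq 1$. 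Since $n$ is odd, elementariness then extends to all of $V(G)$ (Lemma~\ref{nnl}(4)), giving $\sum_{v}(k-d_G(v))\leq k-2$, which is exactly~(d). Note finally that (c) is \emph{not} the lever for pushing Tashkinov's theorem down to $k=\Delta$; that extension (Theorem~\ref{TashT}) uses only the degree hypotheses supplied by (a) and (b).
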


First we describe the method of Tashkinov trees
from~\cite{T}. 
Let $G$ be a multigraph, and let $\phi$ be a partial $(\chi'-1)$-edge colouring
of $G$ (i.e. a $(\chi'-1)$- edge colouring of a subgraph of $G$).  Let $\bar{\phi}(w_j)$ be the 
set of colours missing at $w_j$ according to $\phi$. Let $T =
(w_0,e_0,w_1,...,w_q)$ be a sequence of distinct vertices $w_i$ and edges $e_i$
of $G$, such that each $e_i$ has ends $w_{i+1}$ and $w_k$ for some $k\in
\{0,...,i\}$. Note that $T$ is a tree. We say that $T$ is a $\phi$-Tashkinov tree
if $e_0$ is uncoloured, and for all $i > 0$,
$$\phi(e_i) \in \cM_{T_i,\phi},$$
where $T_i := (w_0,...,w_i)$ and $\cM_{T_i,\phi} = \bigcup_{w_j\in T_i}
\bar{\phi}(w_j)$ is the set of colours missing at some vertex of $T_i$. In other words, $T$ is a $\phi$-Tashkinov tree
if its first edge is uncoloured, and after that, each edge is coloured
with a colour that is missing at a previous vertex. 
We say that a set $S$ of vertices 
is $\phi$-{\em elementary} if no colour $\alpha$ is missing at two distinct
vertices of $S$.
The key property of Tashkinov trees, due to Tashkinov~\cite{T}, is
that if $|\phi|$ is maximum then every $\phi$-Tashkinov tree is
$\phi$-elementary. 

In~\cite{S} (Theorem 5.1), Tashkinov's Theorem~\cite{T} is stated as follows. 
\begin{theorem}\label{TashS}
	Let $G$ be a multigraph with $\chi'(G)=k+1$, where
	$k\geq\Delta(G)+1$. Suppose $e_0$ is an edge such that
	$\chi'(G-e_0)=k$, and 
	let $\phi$ be a $k$-edge-colouring of $G-e_0$. If
	$T$ is a $\phi$-Tashkinov tree starting with $e_0$ then $V(T)$ is
	$\phi$-elementary.  
\end{theorem}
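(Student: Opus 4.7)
My plan is to give Tashkinov's original Kempe-chain argument by contradiction, using a minimality hypothesis on the length of a counterexample. Suppose, for a contradiction, that over all $k$-edge-colourings $\phi$ of $G-e_0$ and all $\phi$-Tashkinov trees $T=(w_0,e_0,w_1,e_1,\ldots,w_q)$ starting with $e_0$, there exists a pair $(\phi,T)$ for which $V(T)$ is not $\phi$-elementary. Among all such counterexamples I would choose one with $q$ as small as possible. Let $j$ be the least index such that some colour $\alpha$ is missing at $w_j$ and also at some earlier $w_i$ with $0\le i<j$. By minimality of $j$, the prefix $T_{j-1}=(w_0,\ldots,w_{j-1})$ is $\phi$-elementary. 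The edge $e_{j-1}$ is coloured with some $\beta\in \cM_{T_{j-1},\phi}$, so $\beta$ is missing at a vertex $w_s\in T_{j-1}$ which, by elementarity of $T_{j-1}$, is unique.

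The next step is to analyze the $\alpha\beta$-Kempe chains of $\phi$ (the maximal paths and cycles in $G-e_0$ whose edges alternate between colours $\alpha$ and $\beta$) passing through $w_i$, $w_j$, and $w_s$. Since $\alpha\in \bar\phi(w_i)\cap\bar\phi(w_j)$ and $\beta\in\bar\phi(w_s)$, each of these three vertices is an endpoint of its own $\alpha\beta$-chain. If the three chains are not all the same, I would swap the colours $\alpha$ and $\beta$ along a carefully chosen chain to obtain a new proper $k$-edge-colouring $\phi'$ of $G-e_0$ with $|\phi'|=|\phi|$. A Kempe swap alters $\bar\phi(\cdot)$ only at the endpoints of the swapped chain, so one can arrange $\phi'$ so that the prefix $T_{j-1}$ remains a valid $\phi'$-Tashkinov tree, while either (i) $\phi'$ extends to a $k$-edge-colouring of all of $G$ (by giving $e_0$ a colour now missing at both endpoints), contradicting $\chi'(G)=k+1$; or (ii) the elementarity failure now occurs at some index strictly less than $j$, contradicting the minimality of $(\phi,T)$.

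The remaining case is that $w_i$, $w_j$, and $w_s$ all lie on a common $\alpha\beta$-chain $P$. Here I would exploit the structure of $P$: a path/cycle has at most two endpoints, yet each of $w_i$, $w_j$, $w_s$ must be an endpoint of $P$ because it is missing one of $\{\alpha,\beta\}$. A case analysis on which two of the three vertices coincide with the ends of $P$ and on the order in which $P$ meets $T$ then allows a swap of a suitable sub-chain of $P$ to again produce a new colouring $\phi'$ realising scenario (i) or (ii). The hypothesis $k\ge \Delta(G)+1$ is used here to guarantee that every vertex of $G$ is missing at least one colour, so that the missing-colour sets $\bar\phi(\cdot)$ are never empty and the Kempe chain endpoints behave as claimed.

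The main obstacle, and the technical heart of the proof, is verifying that the chosen Kempe swap really does preserve the Tashkinov-tree conditions $\phi'(e_\ell)\in \cM_{T_\ell,\phi'}$ for all $\ell<j$. Since swapping changes colours along $P$, a priori some earlier edge $e_\ell$ might acquire a colour that is no longer missing at any vertex of $T_\ell$; the careful choice of which chain (and which endpoint of it) to swap from is precisely what ensures that $T_{j-1}$ survives intact, and it is where all of the subcases — on the relative positions of $w_i$, $w_j$, $w_s$ and on which chain endpoint they occupy — need to be discharged. Once this is established, either the extension in (i) contradicts $\chi'(G)=k+1$ or the new counterexample in (ii) contradicts the minimal choice of $q$, completing the proof.
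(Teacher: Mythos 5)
You should first note that the paper does not prove this statement at all: Theorem~\ref{TashS} is quoted verbatim from~\cite{S} (Theorem 5.1) as Tashkinov's theorem, and the paper only supplies a proof sketch for the \emph{modified} version, Theorem~\ref{TashT}, by indicating where the hypothesis $k\ge\Delta(G)+1$ enters the argument of~\cite{S}. So your attempt is being measured against the original proof in~\cite{T}/\cite{S}, which runs to many pages.

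Against that benchmark your proposal is an outline of the right strategy, not a proof. The genuine gap is the one you yourself flag in your last paragraph: the assertion that ``one can arrange $\phi'$ so that the prefix $T_{j-1}$ remains a valid $\phi'$-Tashkinov tree'' while simultaneously producing scenario (i) or (ii) is precisely the content of Tashkinov's theorem, and none of the subcases is discharged. A Kempe swap along an $(\alpha,\beta)$-chain recolours every edge of the chain, including edges of $T_{j-1}$ and edges whose colours certify the conditions $\phi(e_\ell)\in\cM_{T_\ell,\phi}$; showing that some admissible swap exists in every configuration is where the work lies, and it cannot be waved through. Two further ingredients of the actual proof are absent from your sketch: the base case in which $T$ is a path, which is handled by Kierstead's theorem (Theorem 3.1 in~\cite{S}) rather than by a direct chain argument; and the structural claim that every proper initial segment of a minimal counterexample has at least four colours missing and not used on it (Claim (d) on p.~117 of~\cite{S}), which is needed to carry out the swaps without destroying the tree. (Indeed, the paper's proof of Theorem~\ref{TashT} consists exactly of re-verifying these two points under weaker degree hypotheses.) A smaller remark: with your minimality convention on $q$, the failure index $j$ must in fact equal $q$, since otherwise the prefix $T_j$ is itself a shorter counterexample; exploiting this would simplify your setup, but it does not close the main gap. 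As written, the proposal identifies the correct skeleton but leaves the theorem unproved.
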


Here we will use a slightly nonstandard statement of
Tashkinov's Theorem that is tailored for our purposes.

\begin{theorem}\label{TashT}
	Let $G$ be a multigraph with $\chi'(G)=k+1$, where $k\geq
	\Delta(G)$. Suppose $e_0$ is an 
	edge such that $\chi'(G-e_0)=k$, where the endpoints $x$ and
	$y$ of $e_0$ satisfy $d(x)+d(y)\leq2k-2$, and suppose that
	$d(v)\leq k-1$ for each $v\in V(G)\setminus\{x,y\}$. 
	Let $\phi$ be a $k$-edge-colouring of $G-e_0$. If
	$T$ is a $\phi$-Tashkinov tree starting with $e_0$ then $V(T)$ is
	$\phi$-elementary. 
\end{theorem}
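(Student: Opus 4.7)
My plan is to follow the proof of the classical Tashkinov Theorem (Theorem \ref{TashS}) given in \cite{T}, observing that the degree hypotheses of Theorem \ref{TashT} are exactly what the classical argument actually requires, even though they are weaker than the usual $k \geq \Delta(G) + 1$. The key invariant used throughout Tashkinov's argument is that every vertex of $G$ has at least one missing colour in the current $k$-edge-colouring of $G - e_0$, and this remains true under our hypotheses.

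Concretely, I would first verify the missing-colour invariant. For $v \in V(G) \setminus \{x, y\}$ the hypothesis gives $d_{G - e_0}(v) = d(v) \leq k - 1$, so $|\bar\phi(v)| \geq 1$. For $v \in \{x, y\}$, removing $e_0$ drops the degree by one, and $d(v) \leq \Delta(G) \leq k$, so $d_{G - e_0}(v) \leq k - 1$ and again $|\bar\phi(v)| \geq 1$. Thus $\Delta(G - e_0) \leq k - 1$, which matches the setting in which the classical proof operates once we restrict attention to colourings of $G - e_0$.

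Assuming towards a contradiction that $V(T)$ is not $\phi$-elementary, I would pick the smallest $j$ for which $w_j$ shares a missing colour $\alpha$ with some earlier $w_i \in V(T)$, and then run the classical $\alpha, \beta$-Kempe chain argument of \cite{T} for various auxiliary colours $\beta$ supplied by edges of $T$. Since Kempe swaps preserve vertex degrees in $G - e_0$, the missing-colour invariant is preserved under the resulting colouring $\phi'$, and the full cascade of arguments (also presented in detail in \cite[\S 5]{S}) carries through. Each branch terminates either by producing a colour in $\bar{\phi'}(x) \cap \bar{\phi'}(y)$, which we then assign to $e_0$ to contradict $\chi'(G) = k + 1$, or by extending $T$ into a longer $\phi'$-Tashkinov tree whose non-elementariness contradicts a suitable minimality choice of $(T,\phi)$.

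The hard part will be the detailed case analysis of the classical proof, which is lengthy and intricate; my plan is essentially to re-verify it verbatim under the weaker hypothesis. The role of $d(x) + d(y) \leq 2k - 2$ is precisely to supply the slack $|\bar\phi(x)| + |\bar\phi(y)| \geq 4$ needed at the root of the Tashkinov tree when the argument must select several auxiliary colours at $\{x, y\}$, while $d(v) \leq k - 1$ for $v \notin \{x, y\}$ supplies a missing colour at every other vertex. Together, these two conditions replace the classical hypothesis $k \geq \Delta(G) + 1$ exactly where the proof uses it, so no new combinatorial idea is needed beyond careful bookkeeping.
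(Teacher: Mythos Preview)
Your proposal is correct and takes essentially the same approach as the paper: both verify that the classical Tashkinov argument (as in \cite{T} or \cite[\S5]{S}) goes through under the weaker degree hypotheses, with the key observations being that every vertex has a missing colour and that $d(x)+d(y)\leq 2k-2$ gives $|\bar\phi(x)|+|\bar\phi(y)|\geq 4$ at the root. The paper's writeup is slightly more structured in that it pinpoints the three exact places in the proof of \cite[Theorem~5.1]{S} where $k\geq\Delta+1$ is invoked---the reduction to Kierstead's path theorem, the missing-colour property, and Claim~(d) that at least four missing colours are unused on any proper initial segment---and checks each one; your sketch covers the last two explicitly but does not single out the Kierstead path case, though it is subsumed by the conditions you have already verified.
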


Here we indicate precisely how to modify the proof of
Tashkinov's Theorem as it is written in~\cite{S} to obtain Theorem~\ref{TashT}.
\begin{proof}
	The condition $k\geq\Delta(G)+1$ is used in the proof of Theorem
	5.1 in~\cite{S} only in the following
	places. 
	
	\begin{enumerate}[$(1)$]
		\item To ensure that if $T$ is a path then Kierstead's Theorem
		(Theorem 3.1 in~\cite{S}) 
		applies (p.117 line 11). However, the assumptions of Kierstead's
		Theorem (p.44) only require that 
		$k\geq\Delta(G)$ and that each vertex of $T$ apart from the
		endpoints of $e_0$ has degree less than $k$.
		\item To ensure that every vertex of $G$ is missing at least one
		colour under $\phi$ (p.120 lines 2 and 22).
		\item To ensure Claim (d) (p.117) in the proof of Theorem 5.1 in~\cite{S},
		which states that if $T'$ is a proper initial segment
                of $T$	with at least two vertices then
		there are at least four colours missing and not used on $T'$. Here
		$T'$ is itself $\phi$-elementary by a minimality assumption on $T$.  
	\end{enumerate}
	
	The assumptions in our statement Theorem~\ref{TashT} of Tashkinov's Theorem
	contain the assumptions for Kierstead's Theorem so (1) follows
	immediately. Statement (2) is also immediate for every $v\in
	V(G)\setminus\{x,y\}$, and follows for $x$ and $y$ as well since
	$k\geq\Delta(G)$ and both $x$ and $y$ are incident to the
	uncoloured edge $e_0$. Thus it remains to
	verify that the assumptions of Theorem~\ref{TashT} ensure (3)
	holds. 
	
	Suppose $V(T')=\{w_0,w_1,\ldots,w_{t+1}\}$ where $t\geq 0$. By
	the assumption $d(x)+d(y)\leq2k-2$, and since $e_0$ is
	uncoloured, there are at least
	$k-(d(x)-1)+k-(d(y)-1)\geq2k+2-(2k-2)=4$ colours missing on the first
	two vertices of $T'$ (which are all different, otherwise $e_0$ could be coloured). Each 
	$w_i$ with $i\geq 2$ is missing at least one colour by the assumption
	$d(w_i)\leq k-1$, and since $T'$ is
	$\phi$-elementary all these $t+4$ colours are distinct. The number of
	colours used on $T$ is at most $|E(T)|-1=|V(T)|-2=t$. Hence at
	least four colours are 	missing and not used on $T'$, verifying~(3).
\end{proof}

\begin{remark}\label{re:TashAlg}
	In the proof of Theorem~\ref{TashT} the condition that $\chi'(G)=k+1$ 	can be replaced with the argument that for a
	colouring $\phi$, if $V(T)$ is not $\phi$-elementary then one can
	change the colouring $\phi$ to get a $k$-edge-colouring of
        $G$. The way to change $\phi$ is based on an alternating-paths
        argument and as a result Theorem~\ref{TashT} is in fact
        algorithmic. See Section~\ref{sec:alg} for more details.
\end{remark}

We also need the following simpler statement, which follows easily from known results, e.g.~\cite{CK} (see
also~\cite{S}, Theorem 2.9). It can also be proved directly, with an argument similar to the proof of Vizing's Theorem.  However, as a warm-up we show how to
derive it from Theorem~\ref{TashT}.

\begin{lemma}\label{lem:Chi'Tash}
	Let $k$ be a positive integer. Let $G$ be a multigraph with
        maximum degree at most $k$ and maximum edge 
	multiplicity at most $\mu$. Suppose that every vertex of $G$ except one has
	degree at most $k-t$ where $t\geq\max\{2,\mu\}$. Then
	$\chi'(G)\leq k$. 
\end{lemma}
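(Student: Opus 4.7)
The plan is to argue by contradiction using Theorem~\ref{TashT}. Suppose $\chi'(G)>k$ and pick a $k$-critical subgraph $H\subseteq G$, so that $\chi'(H)=k+1$ and $\chi'(H-e)=k$ for every $e\in E(H)$.

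If $z$ is isolated in $H$ (in particular, if $z\notin V(H)$), then every vertex of $H$ has degree at most $k-t$ and $\mu(H)\leq\mu\leq t$; Vizing's classical multigraph theorem then gives $\chi'(H)\leq\Delta(H)+\mu(H)\leq k$, contradicting $\chi'(H)=k+1$. So I may assume $d_H(z)\geq 1$, fix an edge $e_0=zy\in E(H)$, and let $\phi$ be a $k$-edge-colouring of $H-e_0$, provided by criticality. The hypotheses of Theorem~\ref{TashT} are satisfied for $(H,e_0)$: $\Delta(H)\leq k$; $d_H(z)+d_H(y)\leq k+(k-t)\leq 2k-2$ because $t\geq 2$; and $d_H(v)\leq k-t\leq k-1$ for each $v\notin\{z,y\}$. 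Let $T$ be an inclusion-maximal $\phi$-Tashkinov tree starting at $e_0$ and write $|V(T)|=q+1$ with $q\geq 1$; Theorem~\ref{TashT} then gives that $V(T)$ is $\phi$-elementary.

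The contradiction comes from a two-sided estimate on $|\cM_{T,\phi}|$. Elementariness gives $|\cM_{T,\phi}|=\sum_{v\in V(T)}|\bar\phi(v)|$. Using $|\bar\phi(v)|=k-d_H(v)$ for $v\in V(T)\setminus\{z,y\}$ and $|\bar\phi(u)|=k-d_H(u)+1$ for $u\in\{z,y\}$, together with $d_H(v)\leq k-t$ for every $v\neq z$, I obtain
\[
|\cM_{T,\phi}|\;\geq\;(q-1)t+(t+1)+\bigl(k-d_H(z)+1\bigr)\;=\;qt+k-d_H(z)+2.
\]
For the matching upper bound I would focus on the high-degree vertex $z$: by elementariness the $|\cM_{T,\phi}|-|\bar\phi(z)|$ colours of $\cM_{T,\phi}$ not missing at $z$ are all used at $z$, each on a unique edge of $H$ incident to $z$. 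Maximality of $T$ forces every such edge to lie inside $V(T)$. Since at most $\mu q$ edges of $H$ connect $z$ to $V(T)\setminus\{z\}$ and $e_0$ among them is uncoloured,
\[
|\cM_{T,\phi}|-|\bar\phi(z)|\;\leq\;\mu q-1,\qquad\text{i.e.,}\qquad |\cM_{T,\phi}|\leq\mu q+k-d_H(z).
\]
Comparing the two bounds yields $qt+2\leq\mu q$, equivalently $q(\mu-t)\geq 2$; but $q\geq 1$ and $\mu\leq t$ force $q(\mu-t)\leq 0$, the desired contradiction.

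The main obstacle is spotting which vertex to use for the upper bound: at any $v\neq z$ the term $|\bar\phi(v)|\geq t$ is too generous for the estimate to close, whereas $|\bar\phi(z)|$ may be as small as $1$ and pins $|\cM_{T,\phi}|$ down tightly enough against the internal-degree bound $d_T(z)\leq\mu q$ furnished by maximality.
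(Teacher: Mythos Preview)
Your proof is correct and follows essentially the same route as the paper's: pass to a $k$-critical subgraph, choose $e_0$ incident to the high-degree vertex, apply Theorem~\ref{TashT} to a maximal Tashkinov tree, and compare the number of colours missing on $V(T)\setminus\{z\}$ against the internal degree of $z$ bounded by $\mu(|V(T)|-1)$. The only cosmetic differences are that you dispatch the case where $z$ is isolated via Vizing's multigraph bound (the paper handles it uniformly by letting $x$ be an arbitrary endpoint), and you do the bookkeeping a bit more sharply to obtain $q(\mu-t)\geq 2$ rather than $\mu>t$.
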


\begin{proof}
	
	Suppose on the contrary that $\chi'(G)\geq k+1$.
	Note that every
	subgraph of $G$ also satisfies the conditions of the lemma.
	Thus by removing edges one by one if
	necessary, we may assume that $\chi'(G)=k+1$ but
	$\chi'(G-e)=k$ for every edge $e$, i.e. $G$ is
	$k$-critical. 

The conditions tell us that $G$ has at most one vertex of degree
larger than $k-t$. If such a vertex exists, let us call it $x$ and
choose an edge $e_0$ incident to $x$ in $G$ 
	to be the uncoloured edge. Otherwise we can choose $e_0$ to be
	an arbitrary edge and set $x$ to be one of its endpoints.
	
	Let $\phi$ be a $k$-edge-colouring of $G-e_0$.  Let $y$ denote the other
	endpoint of $e_0$. Then the degree assumption implies that
	$d(x)+d(y)\leq k+k-t\leq2k-2$ and $d(v)\leq k-1$ for 
	each $v\in V(G)\setminus\{x,y\}$.
	Let $T$ be a maximal $\phi$-Tashkinov tree starting with
	$e_0$. Thus the assumptions of Theorem~\ref{TashT} are satisfied,
	and hence $T$ is $\phi$-elementary.
	
	Clearly $|V(T)|\geq 2$.  
	Note that since every vertex different from $x$ has
	degree at most $k-t$, each has at least $t$ missing colours. Since
	also $y\in V(T-x)$ is incident to the uncoloured edge $e_0$ in
	$G[(V(T)]$, the total number of colours missing at vertices of $T-x$ is
	at least $t(|V(T)|-1)+1$. Since $T$ is $\phi$-elementary these are
	all distinct.
	
	Since $T$ is maximal we may conclude that for every colour
	$\alpha\in\cM_{T,\phi}$, no edge $e$ joining $V(T)$ to
	$V(G)\setminus V(T)$ is such that $\phi(e)=\alpha$.
	Therefore since $T$ is $\phi$-elementary, all colours in
	$\cM_{T-x,\phi}$ appear 
	on edges of $G[(V(T)]$ incident to $x$. Therefore $d_{V(T)}(x)\geq
	t(|V(T)|-1)+1$. But 
	clearly $d_{V(T)}(x)\leq\mu(|V(T)|-1)$, so we find $\mu>t$, contradicting
	the assumption of the lemma. Hence $\chi'(G)\leq k$.
\end{proof}
We apply Lemma~\ref{lem:Chi'Tash} only in the case $k=\Delta(G)$,
but for the purposes of the discussion of algorithmic issues in Section~\ref{sec:alg} it is
more convenient to state it in the more general form given here.

Using the previous lemma, we can deduce the following upper bound for the chromatic index,
 assuming that the difference between the two largest degrees is bigger than the difference between the maximum and the minimum edge multiplicity.  

\begin{corollary}\label{cor:RemoveCopiesKn}
	Let $G$ be a multigraph on $n$ vertices with
	degree sequence $d_1\geq d_2\geq\ldots\geq d_n$. Assume that $d_1-d_2\geq \mu(G)-\mu_{min}\geq 2$, where $\mu_{min}:=\mu_{min}(G)$ is the minimum multiplicity of $G$. Then $\chi'(G)=\Delta(G)$ if $n$ is even and $\chi'(G)\leq\Delta(G)+\mu_{min}$ if $n$ is odd.
\end{corollary}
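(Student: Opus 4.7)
The plan is to apply Lemma \ref{lem:Chi'Tash} after peeling off $\mu_{min}$ parallel copies of $K_n$ from $G$. Observe first that the hypothesis $d_1 - d_2 \geq 2$ forces a unique vertex $x$ of maximum degree. Define $G' := G - \mu_{min} K_n$, i.e.\ delete $\mu_{min}$ parallel copies of every pair $\{u,v\} \in \binom{V}{2}$; this is well-defined because every pair has multiplicity at least $\mu_{min}$ (and the operation is vacuous when $\mu_{min} = 0$). Each vertex loses exactly $(n-1)\mu_{min}$ from its degree, so $\Delta(G') = \Delta(G) - (n-1)\mu_{min}$, the top-two-degree gap in $G'$ is still $d_1 - d_2$, and $\mu(G') = \mu(G) - \mu_{min}$.

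I would then apply Lemma \ref{lem:Chi'Tash} to $G'$ with $k := \Delta(G')$ and $t := d_1 - d_2$. The hypothesis $d_1 - d_2 \geq \mu(G) - \mu_{min} \geq 2$ gives $t \geq \max\{2, \mu(G')\}$, and every vertex other than $x$ has degree at most $d_2 - (n-1)\mu_{min} = k - t$ in $G'$, as required. The lemma yields $\chi'(G') \leq k = \Delta(G) - (n-1)\mu_{min}$.

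Finally, I would colour the deleted edges, which form $\mu_{min}$ parallel copies of $K_n$, with fresh colours. Since $\chi'(K_n) = n - 1$ for even $n$ and $\chi'(K_n) = n$ for odd $n$, the multigraph $\mu_{min} K_n$ can be edge-coloured with $\mu_{min}(n-1)$ colours in the even case and $\mu_{min} n$ colours in the odd case. Combining: for even $n$,
\[
\chi'(G) \leq \bigl(\Delta(G) - (n-1)\mu_{min}\bigr) + \mu_{min}(n-1) = \Delta(G),
\]
which together with the trivial lower bound gives $\chi'(G) = \Delta(G)$; and for odd $n$,
\[
\chi'(G) \leq \bigl(\Delta(G) - (n-1)\mu_{min}\bigr) + \mu_{min} n = \Delta(G) + \mu_{min}.
\]
There is no substantive obstacle here: the only thing to check carefully is that peeling $\mu_{min} K_n$ preserves the degree-gap hypothesis and reduces the maximum multiplicity by exactly $\mu_{min}$, which is the direct computation above.
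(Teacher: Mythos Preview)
Your proof is correct and follows essentially the same approach as the paper: decompose $G$ as $\mu_{min}K_n$ plus a remainder $G'$ (the paper's $G_2$), apply Lemma~\ref{lem:Chi'Tash} to $G'$ with $k=\Delta(G')$, and combine with the standard edge-colouring of $\mu_{min}K_n$. The only cosmetic difference is that you explicitly name $t=d_1-d_2$ and note the uniqueness of the maximum-degree vertex, whereas the paper leaves these implicit.
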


\begin{proof}
	We decompose the graph $G$ into two multigraphs: $G_1\cup G_2=G$. Let $G_1$ be the complete multigraph on $n$ vertices with all edge multiplicities $\mu_{min}$. If $n$ is even, we have that $\chi'(K_n)=n-1$ and therefore $\chi'(G_1)= (n-1)\mu_{min}$. If $n$ is odd, we have that $\chi'(K_n)=n$ and therefore $\chi'(G_1)\leq n\mu_{min}$.
	
	Now let $E(G_2)=E(G)\setminus E(G_1)$ and $V(G_2)=V(G)$. Then $\mu(G_2)=\mu(G)-\mu_{min}$. Denote by $d_1(G_2), d_2(G_2)$ the two largest degrees in $G_2$. Then $d_1-d_2=d_1(G_2)-d_2(G_2)$.
	By the assumption we have that $d_1-d_2\geq \mu(G)-\mu_{min}$, that is, 
	$\mu(G_2)\leq d_1(G_2)-d_2(G_2)$. Thus, using Lemma \ref{lem:Chi'Tash}
	applied on $G_2$ with $k=\Delta(G_2)$ we  have that  $\chi'(G_2)= \Delta(G_2)=\Delta(G)-(n-1)\mu_{min}$.
	
	All in all, we have that $$\chi'(G)\leq \chi'(G_1)+\chi'(G_2)\leq n\mu_{min}+\Delta(G)-(n-1)\mu_{min}=\Delta(G)+\mu_{min},$$ when $n$ is odd, and
	$$\chi'(G)\leq \chi'(G_1)+\chi'(G_2)=(n-1)\mu_{min}+\Delta(G)-(n-1)\mu_{min}=\Delta(G),$$ when $n$ is even. Since always $\Delta(G)\leq \chi'(G)$, the claim follows.   
\end{proof}

The remainder of this section is devoted to the proof of Theorem~\ref{nnlapp}.

\subsubsection{The structure of maximum Tashkinov trees}

In order to prove Theorem \ref{nnlapp} we use the following lemma
which gives a more detailed picture of the structure of a multigraph
with a  maximum Tashkinov tree $T$. The set $Q$ will be $V(T)$, and
the other sets will be described in the discussion following the
statement of the lemma.

\begin{lemma} \label{nnl} Let $G$ be a multigraph with
	degree sequence $d_1\geq d_2\geq\ldots\geq d_n$. Suppose $\chi'(G)>k$
	where $k\geq d_1$ and $k\geq d_2+2$. Then there exist vertex sets $X$, $Z$,
	$Q$ and $U$ in $G$ with the following properties.
	\begin{enumerate}[$(1)$]
		\item $Z\subset Q\subseteq U$ and $X=V(G)\setminus U$,
		\item $|Q|$ is odd and $|Q|\geq 2+\sum_{z\in Z}(k-d_G(z))$,
		\item 
		$$|E[Q]|>\frac{|Q|-1}2(|E[X,Q\setminus Z]|+\sum_{v\in
			Q}(k-d_G(v))+2),$$
		\item $\sum_{v\in U}(k-d_G(v))\leq k-2$. Moreover if $n$ is odd and
		$|X|\leq 1$ then $\sum_{v\in V(G)}(k-d_G(v))\leq k-2$.
	\end{enumerate}
\end{lemma}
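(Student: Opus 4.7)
The plan is to pass to a $k$-critical subgraph, apply Theorem~\ref{TashT} to a maximum Tashkinov tree rooted at a carefully chosen uncoloured edge, and then read off the four sets $X$, $Z$, $Q$, $U$ from the vertex set of the tree together with its ``missing colour'' structure.

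First I would reduce to the case that $G$ is $k$-critical: removing any edge does not increase $d_1$, $d_2$, or any multiplicity, so the hypotheses $k\geq d_1$ and $k\geq d_2+2$ survive, while $\chi'(G)>k$ is preserved by choosing a suitable critical subgraph. Thus I may assume $\chi'(G)=k+1$ and $\chi'(G-e)=k$ for every edge $e$. Since $d_2\leq k-2$, at most one vertex has degree exceeding $k-2$; call such a vertex $v_0$ if it exists. Take the uncoloured edge $e_0=xy$ to be incident to $v_0$ when $v_0$ exists, and arbitrary otherwise. Then $d(x)+d(y)\leq d_1+d_2\leq 2k-2$ and every vertex in $V(G)\setminus\{x,y\}$ has degree at most $k-1$, which are precisely the hypotheses of Theorem~\ref{TashT}. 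Let $\phi$ be a $k$-edge-colouring of $G-e_0$ and $T$ a $\phi$-Tashkinov tree starting with $e_0$, with the pair $(\phi,T)$ chosen to maximize $|V(T)|$. Setting $Q:=V(T)$, Theorem~\ref{TashT} guarantees that $Q$ is $\phi$-elementary.

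The next step is to identify the other three sets. My working choice is to take $U:=Q$ and $X:=V(G)\setminus Q$, and to define $Z\subseteq Q$ as the defective vertices of $Q$ whose missing colours are all ``consumed'' internally by edges of $T$ inside $Q$ (so that the remaining defective vertices in $Q\setminus Z$ are precisely those whose missing colours account for the edges leaving $Q$ that appear in property~(3)). Property~(4) with $U=Q$ is then immediate from elementarity: the endpoints $x,y$ each miss one extra colour under $\phi$, so the total number of distinct colours missing at $Q$ equals $\sum_{v\in Q}(k-d_G(v))+2$, and this cannot exceed $k$. Property~(2) has two parts: the oddness of $|Q|$ is the classical odd-order property of a maximum Tashkinov tree in a critical multigraph, proved by a Kempe-chain swap between two colours missing at distinct vertices of $Q$; and the lower bound $|Q|\geq 2+\sum_{z\in Z}(k-d_G(z))$ comes from counting that each $z\in Z$ contributes at least $k-d_G(z)$ edges of $T$ (one per missing colour of $z$), in addition to the two ``seed'' edges attached to $e_0$. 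For property~(3), every colour class is a matching of size at most $(|Q|-1)/2$ inside the odd set $Q$; by maximality of $T$ no colour missing at $Q$ colours a $Q$-$X$ edge, so the edges of $E[Q]$ are distributed among the missing colours plus those non-missing colours whose matchings intersect $Q$. Bookkeeping exactly those $Q$-$X$ edges whose $Q$-endpoint lies in $Q\setminus Z$ (which corresponds to an unavoidable loss of potential matching room inside $Q$) yields the strict inequality with the $|E[X,Q\setminus Z]|$ correction term and the $+2$ from the uncoloured edge.

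The main obstacle is the definition of $Z$ (and possibly an enlargement of $U$ beyond $Q$) so that the three numerical inequalities hold simultaneously with the sharp constants stated; what makes this delicate is that the missing colours at $Q$ must be partitioned into those that ``grow'' $T$ inside $Q$ and those that would otherwise leave, and the partition has to respect the edges counted in $|E[X,Q\setminus Z]|$. A secondary technical point is the oddness of $|Q|$, which is not contained in Theorem~\ref{TashT} as stated and requires the additional Kempe-swap argument standard in the Tashkinov literature; the same swap is ultimately what produces the ``$+2$'' appearing in (2) and the ``$-2$'' appearing in (4).
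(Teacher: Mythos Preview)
Your framework matches the paper's: reduce to a $k$-critical subgraph, choose $e_0$ incident to the unique vertex (if any) of degree larger than $k-2$, take a max-pair $(\phi,T)$, set $Q=V(T)$, and invoke Theorem~\ref{TashT}. Your description of $Z$ (vertices of $Q$ all of whose missing colours are used on edges of $T$) is also the paper's definition. The gap is in your choice of $U$ and $X$.

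With $U=Q$ and $X=V(G)\setminus Q$, property~(3) is simply false in general. The reason is \emph{defective} colours: a colour $\gamma$ that is not missing on $Q$ but appears on at least two (in fact, by parity, at least three) edges leaving $Q$. Such a $\gamma$ contributes several edges to $|E[X,Q\setminus Z]|$ while contributing at most $(|Q|-3)/2$ edges inside $Q$, so the bookkeeping you describe cannot give the stated bound with coefficient $(|Q|-1)/2$. The paper's argument for (3) works precisely because, with its definition of $X$, every edge in $E[X,Q\setminus Z]$ carries a \emph{non-defective, non-missing} colour: these colours are pairwise distinct and each one does form a near-perfect matching inside $Q$.

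To make (3) true, the paper enlarges $U$ to $Q\cup\bigcup_{\gamma\in D_T}\Gamma_\gamma(Q\setminus Z)$, i.e., it absorbs into $U$ every outside endpoint of a defectively coloured edge leaving $Q\setminus Z$. But then (4) is no longer immediate: one must show that this bigger set $U$ is still $\phi$-elementary. This is the substantive part of the proof (the paper's Lemma~\ref{u}), established via three Kempe-chain claims showing that for any such absorbed vertex $a$, any colour missing at $a$ is neither missing on $T$ nor shared with another absorbed vertex. You flag ``possibly an enlargement of $U$ beyond $Q$'' as an obstacle, and that is exactly the missing idea; without it the proposal does not go through.

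Two minor points. First, the oddness of $|Q|$ does not need a Kempe swap: pick any colour $\beta$ missing at $Q$; by maximality of $T$ no $\beta$-edge leaves $Q$, and by elementarity exactly one vertex of $Q$ misses $\beta$, so the $\beta$-class is a near-perfect matching in $G[Q]$. Second, when you pass to a $k$-critical spanning subgraph $G'$, you must check that the conclusions for $G'$ transfer back to $G$; this is routine but not automatic for (3), since both $|E[Q]|$ and $|E[X,Q\setminus Z]|$ change when edges are restored (the paper handles this by tracking the difference $t$ in $|E[X,Q\setminus Z]|$).
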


The rest of this subsection is devoted to the proof of
Lemma~\ref{nnl}. We begin by recalling some definitions. 
For a partial edge colouring $\phi$ of a multigraph $G$,
a set $S$ of vertices is said to be $\phi$-{\it elementary} if no two
vertices in $S$ have a common missing colour under $\phi$. A colour
$\beta$ is
said to be {\it used} on a Tashkinov tree $T$ if some edge of $T$ is coloured
$\beta$. (Note that any used colour is necessarily missing at some
vertex of $T$.) A colour $\gamma$ is said to be {\it defective} for
$T$ if at least two edges coloured
$\gamma$ leave $V(T)$. We write $D_T$ for the set of defective colours
for $T$. For a set 
$S$ and a colour $\gamma$ we write $\Gamma_{\gamma}(S)$ for the set
of vertices outside $S$ that are joined to $S$ via an edge coloured
$\gamma$. A {\it max-pair} for $G$ is
a pair $(\phi,T)$ such that $\phi$ is a partial edge colouring with $|\phi|$  as large as possible, $T$ 
is a $\phi$-Tashkinov tree, and for all pairs $(\psi,T')$ where $\psi$
is a partial colouring with the same set of colours as $\phi$,
$E(\psi)=E(\phi)$, and $T'$
is a $\psi$-Tashkinov tree, we have $|V(T)|\geq|V(T')|$.   

Let $G$ be as in the assumptions of Lemma~\ref{nnl}. It suffices to
prove the statement for $k=\chi'(G)-1$. 

Next we show that it suffices to prove Lemma~\ref{nnl} for
$k$-critical multigraphs $G$. To see this, first note that if $G'$ is
a spanning subgraph of $G$ then the assumptions $k\geq d_1$ and
$k\geq d_2+2$ still hold for $G'$. Suppose we are able to prove that there
exist sets $Q$, $Z$, $U$ and $X$ such that the
conclusions of Lemma~\ref{nnl} hold for $G'$. Then it is clear that
Conclusions (2) and (4) hold for $G$ as well, since $d_G(v)\geq
d_{G'}(v)$ for every vertex $v$. Conclusion (1) is the same for $G$ and $G'$
since $V(G)=V(G')$. For Conclusion (3), let $t=|E_G[X,Q\setminus
Z]|-|E_{G'}[X,Q\setminus Z]|$. Then $\sum_{v\in
	Q}d_G(v)\geq\sum_{v\in Q}d_{G'}(v)+t$, and therefore
\begin{align*}
|E_G[Q]|&\geq|E_{G'}[Q]|>\frac{|Q|-1}2(|E_{G'}[X,Q\setminus Z]|+\sum_{v\in
	Q}(k-d_{G'}(v))+2)\\
&\geq\frac{|Q|-1}2(|E_G[X,Q\setminus Z]|-t+\sum_{v\in
	Q}(k-d_{G}(v))+t+2)\\
&= \frac{|Q|-1}2(|E_G[X,Q\setminus Z]|+\sum_{v\in
	Q}(k-d_{G}(v))+2).
\end{align*}
This shows that we may assume (by removing edges of $G$ one by one if
necessary) that $G$ is $k$-critical. As before, if there still
exists a vertex of degree larger than $k-2$ (which will be unique if it
exists by the assumptions of the lemma) then we choose one of its incident edges to be the uncoloured
edge $e_0$. Otherwise $e_0$ can be chosen arbitrarily. Then the
assumptions of Lemma~\ref{nnl} imply 
that the endpoints $x$ and $y$ of $e_0$ satisfy
$d(x)+d(y)\leq d_1+d_2\leq 2k-2$, and that $d(v)\leq k-1$ for each $v\in
V(G)\setminus\{x,y\}$. Thus at any point in the proof we may apply
Theorem~\ref{TashT} to any $k$-edge-colouring $\phi$ of $G-e_0$ and
any $\phi$-Tashkinov tree $T$.

We choose $\phi$ and $T$ such that $(\phi,T)$ is a max-pair starting with $e_0$.
The sets in the conclusion of Lemma~\ref{nnl} are defined as follows:
$Q=V(T)$, $Z\subset Q$ is the set of vertices $v$ in $Q$ such that
every colour missing at $v$ is used on $T$ (note $Z=\emptyset$ is
possible), $U=Q\cup(\bigcup_{\gamma\in
	D_T}\Gamma_{\gamma}(Q\setminus Z))$, and $X=V(G)\setminus U$.

Our proof will consist of a series of lemmas. We remark that many
of the results and ideas used here have appeared in other works
(e.g. \cite{ChenYuZang,Scheide,S,T}) but in order to keep this paper self-contained we
include all proofs.  

\begin{lemma}\label{q} With these definitions,
	$|Q|$ is odd and $|Q|\geq 2+\sum_{z\in Z}(k-d_G(z))$. Moreover
	every defective colour for $T$ occurs on at least three edges 
	leaving $Q$.
\end{lemma}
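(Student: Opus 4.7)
The plan is to introduce, for each colour $\alpha$, three quantities associated to $Q = V(T)$: $m_\alpha := |\{v \in Q : \alpha \in \bar\phi(v)\}|$, the number $r_\alpha$ of $\alpha$-coloured edges inside $G[Q]$, and the number $s_\alpha$ of $\alpha$-coloured edges leaving $Q$. Since the edges of colour $\alpha$ form a matching in $G$, counting incidences with $Q$ yields the basic identity
$$|Q| = m_\alpha + 2r_\alpha + s_\alpha.$$
Two features of the max-pair $(\phi,T)$ will be used throughout: (i) by Theorem~\ref{TashT}, $V(T)=Q$ is $\phi$-elementary, so $m_\alpha \le 1$ for every $\alpha$; and (ii) the maximality of $|V(T)|$ in the max-pair forces $s_\alpha = 0$ whenever $\alpha \in \cM_{T,\phi}$, because otherwise appending such an $\alpha$-edge and its outside endpoint to $T$ would yield a strictly longer $\phi$-Tashkinov tree, contradicting the choice of $(\phi,T)$.

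To show $|Q|$ is odd, observe that the endpoints $x,y \in Q$ of the uncoloured edge $e_0$ satisfy $d_\phi(x) \le k-1$, so at least one colour $\alpha$ is missing at $x$. For such $\alpha$ we have $m_\alpha = 1$ and, by (ii), $s_\alpha = 0$, so the identity reduces to $|Q| \equiv 1 \pmod{2}$.

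For the lower bound on $|Q|$, note that $T$ has $|Q|-1$ edges of which exactly one ($e_0$) is uncoloured, so at most $|Q|-2$ distinct colours are used on $T$. By the definition of $Z$, for every $z \in Z$ all $k - d_\phi(z) \ge k - d_G(z)$ colours in $\bar\phi(z)$ appear on $T$; by (i) these sets of missing colours are pairwise disjoint across $z \in Z$. Summing,
$$\sum_{z \in Z}(k - d_G(z)) \le \sum_{z \in Z}(k - d_\phi(z)) \le |Q|-2,$$
which rearranges to the required inequality.

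Finally, for a defective colour $\gamma$ we have $s_\gamma \ge 2 > 0$, so by the contrapositive of (ii), $m_\gamma = 0$. The identity becomes $|Q| = 2r_\gamma + s_\gamma$, and since $|Q|$ is odd, $s_\gamma$ must be odd, forcing $s_\gamma \ge 3$. The step that requires the most care is property (ii): one needs to verify from the max-pair definition that extending $T$ by any such $\alpha$-edge with $\alpha \in \cM_{T,\phi}$ genuinely produces another valid $\phi$-Tashkinov tree with the same coloured edge set, contradicting the maximality of $|V(T)|$ in the class of pairs $(\psi,T')$ considered in the definition.
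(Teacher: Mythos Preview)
Your proof is correct and follows essentially the same approach as the paper. The identity $|Q|=m_\alpha+2r_\alpha+s_\alpha$ is a clean way of packaging the parity arguments that the paper leaves implicit (the paper phrases oddness as ``the $\beta$-colour class in $G[Q]$ is a matching missing exactly one vertex'' and derives $s_\gamma\ge3$ directly from ``no defective colour is missing on $T$'' plus oddness), and your bound on $|Q|$ via the number of coloured tree edges is exactly the paper's count. One minor remark: for property~(ii) you only need that $T$ is a \emph{maximal} $\phi$-Tashkinov tree, not the full max-pair hypothesis; the extension by an $\alpha$-edge to a new vertex outside $Q$ is immediately a valid $\phi$-Tashkinov tree by the definition, so the verification you flag as delicate is in fact routine.
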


\begin{proof}
	Since $Q$ contains the two endpoints of the uncoloured edge we know
	$|Q|\geq 2$. Since $k\geq d_1$, both endpoints of the uncoloured edge
	are missing at 
	least one colour (and by maximality of $\phi$ no colour is missing at
	both endpoints), thus by 
	definition of Tashkinov tree and by maximality $|Q|>2$. 
	By maximality of $T$, no edge coloured with a missing colour $\beta$ leaves
	$Q$, so by Theorem~\ref{TashT} the $\beta$ colour class in $G[Q]$ is a
	matching that misses 
	exactly one vertex, i.e. $|Q|$ is odd (and hence at least
	three). By maximality of $T$, no defective colour is missing on $T$, which
	implies that every defective colour occurs on at least three edges
	leaving $V(T)$.
	
	Each vertex  $z\in Z$ is missing
	at least $k-d_G(z)$ colours, and by Theorem~\ref{TashT} these are all
	distinct. Hence the number of colours appearing on edges of $T$ is at
	least $\sum_{z\in Z}(k-d_G(z))$. One edge of $T$ is uncoloured, thus
	$T$ has at least $\sum_{z\in Z}(k-d_G(z))+2$ vertices. 
	%
	%
\end{proof}

\begin{lemma}\label{eq} The set $Q$ satisfies
	$$|E[Q]|>\frac{|Q|-1}2(|E[X,Q\setminus Z]|+\sum_{v\in
		Q}(k-d_G(v))+2).$$
\end{lemma}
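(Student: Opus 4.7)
The plan is to establish the inequality through a colour-class counting argument that exploits the oddness of $|Q|$. Fix a max-pair $(\phi,T)$ with $Q=V(T)$; by Theorem~\ref{TashT} the set $Q$ is $\phi$-elementary. For each colour $\alpha\in[k]$ the class $\phi^{-1}(\alpha)$ is a matching of $G-e_0$; let $a^{\rm in}_\alpha$ and $a^{\rm out}_\alpha$ be the numbers of $\alpha$-edges lying in $G[Q]$ and leaving $Q$, respectively. I would partition $[k]$ into three sets: the set $M$ of colours missing at some vertex of $Q$, the set $N$ of non-missing non-defective colours, and the defective set $D_T$. Counting pairs $(v,\alpha)$ with $v\in Q$ and $\alpha$ missing at $v$ in two ways, using $\phi$-elementarity together with the fact that both endpoints of $e_0$ lie in $Q$, yields $|M|=m$ where $m:=\sum_{v\in Q}(k-d_G(v))+2$.

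Next I would evaluate $a^{\rm in}_\alpha$ case by case. Since each vertex of $Q$ meets at most one $\alpha$-edge, $2a^{\rm in}_\alpha+a^{\rm out}_\alpha$ equals $|Q|$ or $|Q|-1$ according to whether $\alpha\notin M$ or $\alpha\in M$. For $\alpha\in M$, maximality of $T$ forces $a^{\rm out}_\alpha=0$, so $a^{\rm in}_\alpha=(|Q|-1)/2$. For $\alpha\in N$, the parity of $|Q|$ forces $a^{\rm out}_\alpha=1$ and again $a^{\rm in}_\alpha=(|Q|-1)/2$. For $\alpha\in D_T$, Lemma~\ref{q} gives $a^{\rm out}_\alpha\geq 3$. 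Summing over all $\alpha$ and adding the uncoloured edge $e_0$ produces the identity
$$|E[Q]|=1+k\cdot\frac{|Q|-1}{2}-R,\qquad R:=\sum_{\alpha\in D_T}\frac{a^{\rm out}_\alpha-1}{2}.$$

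The crucial structural step is the bound $|E[X,Q\setminus Z]|\leq|N|$. To see this, consider any edge $e=uw$ with $u\in X$ and $w\in Q\setminus Z$, and set $\alpha=\phi(e)$. Then $\alpha\notin M$ since a missing colour cannot label an edge leaving $Q$ (by maximality of $T$), and $\alpha\notin D_T$ because otherwise $u\in\Gamma_\alpha(Q\setminus Z)\subseteq U$, contradicting $u\in X=V(G)\setminus U$. Hence $\alpha\in N$, and since each colour in $N$ labels exactly one edge leaving $Q$, the assignment $e\mapsto\phi(e)$ is an injection into $N$.

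Finally, I would combine the identity for $|E[Q]|$ with $k=m+|N|+|D_T|$, the just-proved inequality $|N|\geq|E[X,Q\setminus Z]|$, and the uniform bound $a^{\rm out}_\alpha\leq|Q|$ (which gives $R\leq|D_T|(|Q|-1)/2$) to obtain
\begin{align*}
|E[Q]|-\frac{|Q|-1}{2}\bigl(|E[X,Q\setminus Z]|+m\bigr)
&=1+\frac{|Q|-1}{2}\bigl(|N|+|D_T|-|E[X,Q\setminus Z]|\bigr)-R\\
&\geq 1+\frac{|Q|-1}{2}|D_T|-R\;\geq\;1\;>\;0.
\end{align*}
The main obstacle lies precisely in the bound $|E[X,Q\setminus Z]|\leq|N|$: the set $U$ has been engineered so that no defective colour labels an edge from $Q\setminus Z$ to $X$, and this structural feature is exactly what permits the injection into $N$. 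Without it, defective colours could consume the available slack and the single unit of surplus provided by the uncoloured edge $e_0$ would not suffice to force the strict inequality.
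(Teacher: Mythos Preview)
Your proof is correct and follows essentially the same approach as the paper's. Both arguments hinge on the same two structural observations: (i) every colour missing on $T$ induces a perfect matching of size $(|Q|-1)/2$ in $G[Q]$ (by maximality of $T$ and $\phi$-elementarity), and (ii) every edge of $E[X,Q\setminus Z]$ carries a non-missing, non-defective colour, each such colour being used at most once on the boundary; the paper simply counts these $|E[X,Q\setminus Z]|+m$ disjoint matchings directly and adds $e_0$, whereas you derive the exact identity $|E[Q]|=1+k(|Q|-1)/2-R$ first and then bound the defective remainder $R$, which is a slightly more elaborate bookkeeping of the same idea.
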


\begin{proof}
	By definition of $U$, no edge joining $Q\setminus Z$ to $X$ is
	coloured with a defective colour (for $Q$). By maximality of $T$, 
	no edge joining $Q\setminus Z$ to $X$ is
	coloured with a colour missing on $T$. Recall also that the only
	uncoloured edge is $e_0$ which is not in $E[Q\setminus Z,X]$. Thus each edge of
	$E[Q\setminus Z,X]$ is coloured with some $\delta$ where the colour
	class of $\delta$ in 
	$G[Q]$ is a matching of size $\frac{|Q|-1}2$, and these matchings are
	all disjoint from the colour classes of missing colours (which are
	also matchings of size $\frac{|Q|-1}2$). Moreover, no colour $\delta$
	can appear more than once on $E[Q\setminus Z,X]$, otherwise it would
	be a defective colour.
	
	The number of colours  missing on $T$ is at least $\sum_{v\in
		Q}(k-d_G(v))+2$, and these are all distinct by
	Theorem~\ref{TashT}. Putting these facts together gives that the
	number of coloured edges in $E[Q]$ is at least
	$$\frac{|Q|-1}2(|E[X,Q\setminus Z]|+\sum_{v\in
		Q}(k-d_G(v))+2).$$
	The fact that the uncoloured edge $e_0$ is also in $E[Q]$ gives the
	strict inequality, as required.
\end{proof}

The next lemma will imply Conclusion (4)  of
Lemma~\ref{nnl} immediately.

\begin{lemma}\label{u}
	The set $U$ is $\phi$-elementary.
\end{lemma}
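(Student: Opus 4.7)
My plan is to argue by contradiction, using Theorem~\ref{TashT} together with a Kempe-chain exchange and the maximality properties of the chosen max-pair $(\phi, T)$. Suppose $U$ fails to be $\phi$-elementary, so that some colour $\alpha$ is missing at two distinct vertices $u, v \in U$. If both $u$ and $v$ lie in $Q = V(T)$, then $V(T)$ is not $\phi$-elementary, contradicting Theorem~\ref{TashT}, whose hypotheses are satisfied here by the reduction preceding Lemma~\ref{nnl} (so that $d(x)+d(y)\le 2k-2$ and every other vertex has degree at most $k-1$). Hence we may assume $u \in U \setminus Q$. By definition of $U$, there is a defective colour $\gamma \in D_T$ and a vertex $z \in Q \setminus Z$ with $\phi(zu) = \gamma$, and since $z \notin Z$ we may fix a colour $\beta$ missing at $z$ that is not used on any edge of $T$.

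The main argument proceeds by performing a swap along the $(\alpha, \gamma)$-Kempe chain $P$ emanating from an endpoint of the conflict (either $v$ or $u$, depending on the configuration). Since $\alpha$ is missing at the chosen endpoint, the chain begins with a $\gamma$-edge; after interchanging $\alpha$ and $\gamma$ along $P$ one obtains a new partial colouring $\psi$ with $E(\psi) = E(\phi)$ and the same set of colours used. The aim is then to exhibit a $\psi$-Tashkinov tree $T'$ starting at $e_0$ with $|V(T')| > |V(T)|$ — typically by retaining the appropriate initial portion of $T$ and using the newly-freed missing colour (either $\alpha$ or $\gamma$) at a vertex of $T$ to append the edge $zu$, and possibly further edges beyond — thereby contradicting the maximality of $|V(T)|$ in the max-pair $(\phi, T)$. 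In some sub-cases one instead produces a colouring with strictly more coloured edges, contradicting the maximality of $|\phi|$.

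The main obstacle is to verify that this exchange argument does yield a contradiction in every configuration. By Lemma~\ref{q} each defective colour leaves $Q$ on at least three edges, and by maximality of $T$ the colour $\gamma$ is not missing at any vertex of $Q$, so the chain $P$ can weave between $Q$ and its external $\gamma$-neighbourhood and may flip the colours of several edges of $T$ itself. One must therefore case-split on the location of $v$ (in $Q$ versus in $U \setminus Q$), on where the chain $P$ terminates (at $u$, at another vertex of $\Gamma_\gamma(Q\setminus Z)$, inside $Q$, or outside $U$), and on whether internal edges of $T$ are affected by the swap; in each branch one must either rebuild a strictly longer $\psi$-Tashkinov tree, or exhibit a way to properly colour the uncoloured edge $e_0$. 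This sort of careful Kempe-chain case analysis is standard in the Tashkinov-tree literature (e.g.~\cite{ChenYuZang,Scheide,S,T}), and adapting those exchange arguments to the present notions of $Z$ and $U$ is the technical heart of the proof.
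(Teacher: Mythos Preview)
Your overall architecture (contradiction via Theorem~\ref{TashT}, then a Kempe-chain swap exploiting the max-pair property) is right, and you even isolate the key auxiliary colour $\beta$ missing at $z\in Q\setminus Z$ and unused on $T$. But then you abandon $\beta$ and base the whole argument on the $(\alpha,\gamma)$-chain, where $\gamma$ is the \emph{defective} colour. That is exactly the wrong pair. Because $\gamma$ sits on at least three edges leaving $Q$ and may also coincide with colours used on $T$ via other missing colours, an $(\alpha,\gamma)$-swap can recolour edges of $T$, move $\alpha$ around inside $Q$, and disturb the Tashkinov structure in ways that are genuinely hard to control; you acknowledge this yourself and then defer the entire case analysis to ``standard'' literature. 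As written, this is a plan for a proof, not a proof, and the obstacle you name is self-inflicted.

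The paper's argument swaps on the $(\alpha,\sigma)$-chain instead, with $\sigma$ (your $\beta$) chosen missing at $q$ and \emph{not used on} $T$. This single change removes the obstacle completely: once one shows (Claim~1) that $\alpha$ cannot be missing on $T$, neither $\alpha$ nor $\sigma$ is the colour of any edge of $T$, so the swap leaves $T$ intact as a Tashkinov tree. In Claim~1 both $\alpha$ and $\sigma$ are missing on $T$, hence neither leaves $Q$, so the chain from $a\notin Q$ never enters $Q$; after the swap one recolours $qa$ with $\sigma$, freeing $\gamma$ at $q$, and extends $T$ along another $\gamma$-edge (there is one, since $\gamma$ was defective). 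Claim~2 then pins the $(\alpha,\sigma)$-chain from $a$ to end at $q$, and Claim~3 (two outside vertices sharing $\alpha$) reduces to Claim~1 by one more $(\alpha,\sigma)$-swap. No case-split on where the chain weaves, no rebuilding of $T$ from scratch. Your missing idea is simply: put $\beta$, not $\gamma$, into the Kempe chain.
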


\begin{proof} 
	Let $qa$ be an edge of $G$ with $q\in Q\setminus Z$ and
	$a\notin Q$. Suppose $qa$ is coloured with some $\gamma$ 
	that is defective for
	$T$. Since $q\notin Z$ we know that there exists a colour
	$\sigma$ missing at $q$ that is not
	used on $T$. 
	
	\noindent {\bf Claim 1:} Let $\alpha$ be a colour
	missing at $a$. Then $\alpha$ is not missing on $T$.
	
	\noindent{\it Proof of Claim 1.} Suppose on the contrary that $\alpha$
	is  missing on $T$. By maximality of $T$ no
	edge coloured $\alpha$ or $\sigma$ leaves $T$. Hence the
	$(\alpha,\sigma)$-path $P$ from $a$ does not contain any vertex of $Q$.
	Form a new colouring by switching on $P$. Then $\sigma$ is missing at
	both $a$ and 
	$q$, and the tree $T$ is unchanged. Now
	recolour $qa$ from $\gamma$ to 
	$\sigma$, to get a colouring $\psi$ with $E(\psi)=E(\phi)$. Then
	$T$ is a $\psi$-Tashkinov tree since $\sigma$ was not used on $T$. Note
	$\gamma\notin\{\alpha,\sigma\}$. Since
	$\gamma$ was defective, there remain at least two edges leaving $Q$
	coloured $\gamma$, while $\gamma$ is missing at $q$. Hence $T$ could
	be extended under $\psi$, contradicting that fact that $(\phi,T)$ was 
	a max-pair.
	
	\noindent {\bf Claim 2:} Let $\alpha$ be a colour missing at $a$. Then
	the $(\alpha,\sigma)$-path $P$ from $a$ ends at $q$.
	
	\noindent{\it Proof of Claim 2.} Suppose on the contrary that $P$ does
	not end at $q$. Switch on $P$ to obtain a new colouring. Then
	since no edge of $T$ was coloured $\alpha$ (using Claim 1) or
	$\sigma$, we know that 
	$T$ is unchanged. But now $\sigma$ is missing at $a$ as well, so recolouring
	$qa$ to $\sigma$ gives a colouring $\psi$ that again contradicts that fact
	that $(\phi,T)$ was a max-pair, because $T$ is still a
	$\psi$-Tashkinov tree and (as in Claim 1) we can extend
	$T$ via another edge coloured $\gamma$.
	
	\medskip
	
	Now consider another edge $rb$ of $G$ with $r\in Q\setminus Z$ and
	$b\notin Q$, where $a\not=b$ ($q=r$ is possible). Suppose $rb$ is
	coloured with some
	$\delta$ that is defective for $T$. (Note $\gamma=\delta$
	is possible, in which case $q\not= r$.)
	
	\noindent {\bf Claim 3:} No colour $\alpha$ is missing at both $a$ and
	$b$.
	
	\noindent{\it Proof of Claim 3.} Suppose on the contrary that $\alpha$
	is such a colour. We know by Claim 2 that the $(\alpha,\sigma)$-path
	$P$ from $a$ ends at $q$. Switch on $P$ to obtain a new colouring
	$\psi$. Since no edge of $T$ was coloured $\alpha$ (using Claim 1) or
	$\sigma$, we know that $T$ is unchanged, except that $\alpha$ is now
	missing at $q$ instead of $\sigma$. The vertex $b$ was not on $P$ and
	hence $\alpha$ is still missing at $b$. But now $(\psi,T)$ is a
	max-pair and $\delta$ is a defective colour for $T$ (observe
	$\delta\notin\{\alpha,\sigma\}$), and $\alpha$ is missing at $b$ and
	also at $q\in Q$. This contradicts Claim 1 applied to $rb$ and the
	max-pair $(\psi,T)$.   
	
	\medskip
	
	Now Claims 1 and 3 together with Theorem~\ref{TashT} give us the
	desired conclusion that $U$ is $\phi$-elementary. 
\end{proof}

Lemma~\ref{u} implies Conclusion (4) in Lemma~\ref{nnl} because
there are at least $2+\sum_{v\in U}(k-d_G(v))$ missing colours on the
vertices of $U$ (recall that $Q\subseteq U$ contains the
uncoloured edge). This is therefore at most the total number $k$ of
colours. Moreover if $n$ is odd then no colour can be missing on
exactly two vertices of $G$, hence if $|X|\leq 1$ then in fact the
whole set $V(G)$ is $\phi$-elementary. This implies that $2+\sum_{v\in
	V(G)}(k-d_G(v))\leq k$ as required.





\subsubsection{Conditions for high chromatic index}

In this section we show how to use Lemma \ref{nnl} in order to prove Theorem~\ref{nnlapp}.

%

\begin{proof}
	Let $G$ be as given with $\chi'(G)>k$, and suppose that
	none of (a)--(c) holds. Then in particular the assumptions of Lemma~\ref{nnl}
	are satisfied. Let $X$, $Z$, $Q$ and $U$ be as given by
	Lemma~\ref{nnl}.

	First we claim that $|Z|\leq\frac{|Q|+1}3$. By Lemma~\ref{nnl}(2) we know
	$|Q|\geq 2+\sum_{z\in Z}(k-d_G(z))$. Since (a) and
	(b) do not hold we find $|Q|\geq 2+3(|Z|-1)$, verifying our claim.
	
	Since
	$|E[X,Q\setminus Z]|\geq|X|(|Q|-|Z|)\mu_{min}$, we derive from 
	Lemma~\ref{nnl}(3) that 
	$$\mu\geq\frac{|E[Q]|}{{|Q|\choose
			2}}>|X|(1-\frac{|Z|}{|Q|})\mu_{min}+\frac1{|Q|}(2+\sum_{v\in Q}(k-d_G(v))).$$
	Since (a) and (b) do not hold we know that $\sum_{v\in
		Q}(k-d_G(v))\geq 3(|Q|-1)$. Therefore
	$\mu>|X|(1-\frac{|Z|}{|Q|})\mu_{min}+3-\frac1{|Q|}$. If $\mu_{min}=0$, then by the fact that $(c)$ does not hold we have that $\mu\leq 2$. 	By Lemma~\ref{nnl}(2) we know that $|Q|\geq 3$ because $|Q|$ is odd, and therefore 	$\mu>3-\frac1{|Q|}>2$. Thus we can assume that $\mu_{min}> 0$.
	Since
	$|Z|\leq\frac{|Q|+1}3$ we find 
	$$|X|<\frac{\mu-3+\frac1{|Q|}}{\mu_{min}(1-\frac{|Z|}{|Q|})}=\frac{\mu-3+\frac1{|Q|}}{\mu_{min}}\Big{(}\frac3{2-\frac1{|Q|}}\Big{)}.$$
	Again, by the fact that $|Q|\geq 3$  we conclude (using the fact that (c) does not hold) that 
	$|X|<\frac{\mu-3+\frac1{3}}{\mu_{min}}(\frac3{2-\frac1{3}})\leq2$. 
	
	Therefore $|X|\leq 1$. Since $n$ is odd, Lemma~\ref{nnl}(4) tells us
	that $\sum_{v\in V(G)}(k-d_G(v))\leq k-2$, in other words
	$kn-2|E(G)|\leq k-2$. Thus (d)
	holds, completing the proof. 
\end{proof}

\vspace{3mm}
\subsection{Binomial distribution bounds}\label{sec:Bin}

In many of the probabilistic statements in this paper we use the \textit{binomial distribution}. We say that $X\sim \Bin(m,p)$ if $\Pr[X=k]=\binom{m}{k}p^k(1-p)^{m-k}$ where $0\leq k\leq m$ is an integer. 
We start with four basic observations.

\begin{observation}\label{obs:BinDec}
	Let $d,d',m,m'$ be integers and let $p,p'\in [0,1]$. Assume that $m'\leq m$, $d\leq d'$ and $p'\leq p$. Then for $X\sim \Bin(m,p)$ and $X'\sim \Bin (m',p')$ we have $\Pr[X'\geq d']\leq \Pr[X\geq d]$.
\end{observation}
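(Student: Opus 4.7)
The plan is to reduce the three-parameter monotonicity to three separate one-parameter monotonicities and chain the resulting inequalities. Specifically, I will show that
\[
\Pr[X'\geq d'] \;\leq\; \Pr[\Bin(m',p)\geq d'] \;\leq\; \Pr[\Bin(m,p)\geq d'] \;\leq\; \Pr[\Bin(m,p)\geq d] \;=\; \Pr[X\geq d],
\]
using respectively the monotonicity in $p$, in $m$, and in $d$. The last step is trivial since $d\leq d'$ implies $\{X\geq d'\}\subseteq\{X\geq d\}$, so I will focus on the other two.

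For monotonicity in $p$, I would couple the two binomials on a common probability space: take $U_1,\ldots,U_{m'}$ i.i.d.\ uniform on $[0,1]$ and set $Y_i=\mathbf{1}[U_i\leq p']$ and $Y'_i=\mathbf{1}[U_i\leq p]$. Since $p'\leq p$ we have $Y_i\leq Y'_i$ pointwise, so $\sum Y_i\leq \sum Y'_i$ pointwise, giving $\Pr[\Bin(m',p')\geq d']\leq \Pr[\Bin(m',p)\geq d']$. For monotonicity in $m$, I would take $m$ i.i.d.\ Bernoulli$(p)$ variables $Z_1,\ldots,Z_m$ and compare $\sum_{i=1}^{m'} Z_i\leq \sum_{i=1}^{m} Z_i$ pointwise (using $m'\leq m$), which yields $\Pr[\Bin(m',p)\geq d']\leq \Pr[\Bin(m,p)\geq d']$.

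Chaining the three inequalities gives the claim. There is no real obstacle here; the only thing to double-check is that the $m=0$ or $p\in\{0,1\}$ boundary cases still obey the same couplings, which they do trivially. This justifies why the observation is stated without proof elsewhere and can be used throughout the paper as a convenient stochastic-dominance tool.
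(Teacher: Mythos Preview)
Your proof is correct; the coupling arguments for monotonicity in $p$ and in $m$ are the standard way to establish stochastic dominance of binomials, and chaining them with the trivial monotonicity in the threshold $d$ gives exactly the stated inequality. The paper itself provides no proof of this observation---it is simply stated as a basic fact and used later---so your argument supplies the routine justification that the authors chose to omit.
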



%

%

\begin{observation}\label{obs:BinFrac}
	Let $0<p<1$ and let $k,m$ be  integers, $0<k\leq \frac m2$. Let $X\sim \Bin(m,p)$. Then
	$$\frac {\Pr[X=k]}{\Pr[X=k+1]}=\frac {(k+1)(1-p)}{(m-k)\cdot p}<\frac {4k}{mp}. $$
	In addition, $\Pr[X\geq k]=\Pr[X=k]+\Pr[X\geq k+1]<(1+\frac {4k}{mp})\Pr[X\geq k+1]$.
\end{observation}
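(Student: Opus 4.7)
The plan is to handle the two assertions in turn, both by direct computation starting from the defining formula for the binomial point probabilities.

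First I would prove the equality $\Pr[X=k]/\Pr[X=k+1]=(k+1)(1-p)/((m-k)p)$ simply by expanding $\binom{m}{k}p^k(1-p)^{m-k}$ and $\binom{m}{k+1}p^{k+1}(1-p)^{m-k-1}$ and cancelling common factors (both binomial coefficients in terms of factorials and the powers of $p$ and $1-p$). This is a one-line algebraic identity and needs no probabilistic input.

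Next I would derive the strict inequality $(k+1)(1-p)/((m-k)p)<4k/(mp)$ by two separate crude bounds on the numerator and denominator. Since $k\geq 1$, we have $k+1\leq 2k$, and trivially $1-p<1$, so the numerator is strictly bounded by $2k$. Since $k\leq m/2$, we have $m-k\geq m/2$, so $1/(m-k)\leq 2/m$. Multiplying these two bounds and including the factor $1/p$ yields $4k/(mp)$, as desired. (Strictness comes from $1-p<1$; the hypothesis $0<p<1$ makes this legitimate.)

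For the second assertion, I would combine the identity $\Pr[X\geq k]=\Pr[X=k]+\Pr[X\geq k+1]$ with the bound just proved. Since $k+1\leq m/2+1\leq m$ the event $\{X=k+1\}$ has positive probability, and the first assertion gives $\Pr[X=k]<(4k/(mp))\Pr[X=k+1]\leq (4k/(mp))\Pr[X\geq k+1]$. Substituting into the split for $\Pr[X\geq k]$ and collecting terms yields $\Pr[X\geq k]<(1+4k/(mp))\Pr[X\geq k+1]$, completing the observation. There is no real obstacle here; the only point to watch is that the chain is valid even at the boundary $k=m/2$, which is covered by the hypothesis.
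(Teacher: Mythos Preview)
Your proposal is correct; the paper states this as an observation without proof, and your direct computation (cancelling in the ratio of binomial point masses, then bounding $k+1\leq 2k$, $1-p<1$, $m-k\geq m/2$) is exactly the natural verification the reader is expected to supply.
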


In the following claim we show that a binomial variable decays relatively slowly around its expectation.

\begin{claim}\label{claim:alphaBin}
	Let $p:=p(m)=o(1)$
	and $mp=\omega(1)$. Let  $h\geq1$ be such that $d=mp+h$ is an integer and $h=o(mp)$. Then for $X\sim \Bin(m,p)$  and some $\alpha=\Theta\left(\frac {h}{mp}\right)$ we have
	$\Pr[X=d+1]=(1-\alpha)\Pr[X=d]$.
	If also $h=\Omega(\sqrt {mp})$, then 
	$$\Pr[X=mp+h]=O\left(\alpha\right)\Pr[X\geq mp+h].$$
	\end{claim}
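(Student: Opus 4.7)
My plan for the first statement is to start from the exact binomial ratio
\begin{equation*}
\frac{\Pr[X=d+1]}{\Pr[X=d]} \;=\; \frac{(m-d)p}{(d+1)(1-p)},
\end{equation*}
which is immediate from the definition, and then substitute $d=mp+h$. A short simplification (computing $(d+1)(1-p)-(m-d)p$) yields
\begin{equation*}
1-\frac{\Pr[X=d+1]}{\Pr[X=d]} \;=\; \frac{h+1-p}{(mp+h+1)(1-p)}.
\end{equation*}
The hypotheses $p=o(1)$ and $h=o(mp)$ turn the denominator into $mp(1+o(1))$ and the numerator into $(h+1)(1+o(1))$, so the right-hand side equals $(h+1)(1+o(1))/(mp)$, which is $\Theta(h/(mp))$ since $h\geq 1$. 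Calling this quantity $\alpha$ gives exactly the first conclusion.

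For the second statement I plan to iterate the same identity. Define $\alpha_j$ by $\Pr[X=j+1]=(1-\alpha_j)\Pr[X=j]$; the computation above with $j$ in place of $d$ shows $\alpha_j=\Theta((j-mp)/(mp))$ as long as $j-mp=o(mp)$, and in particular $\alpha_j$ is monotone increasing in $j$. The inequality $\Pr[X\geq d]\leq(1/\alpha)\Pr[X=d]$ would follow immediately by bounding each successive ratio by $1-\alpha$; however, the conclusion we actually need is the reverse direction, $\Pr[X\geq d]\geq\Omega(1/\alpha)\Pr[X=d]$, which is what the statement $\Pr[X=mp+h]=O(\alpha)\Pr[X\geq mp+h]$ is asking for.

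To obtain this lower bound I will restrict attention to the window of indices $j\in[d,d+\lfloor h\rfloor]$. Throughout this window $j-mp\in[h,2h]$, so the first part gives $\alpha_j=\Theta(\alpha)$ uniformly, whence $\Pr[X=d+i]\geq(1-C\alpha)^{i}\Pr[X=d]$ for some absolute constant $C>0$ and every $0\leq i\leq h$. Taking $i\leq 1/(C\alpha)$ keeps this at least $e^{-1}\Pr[X=d]$, and summing those $\Theta(1/\alpha)$ terms produces $\Pr[X\geq d]\geq\Omega(1/\alpha)\Pr[X=d]$, as required. The extra hypothesis $h=\Omega(\sqrt{mp})$ is used precisely here: it is equivalent to $1/\alpha=\Theta(mp/h)\leq h$, i.e.\ the window $[0,\lfloor 1/(C\alpha)\rfloor]$ of indices on which each $\Pr[X=d+i]$ is still a constant fraction of $\Pr[X=d]$ fits inside $[0,\lfloor h\rfloor]$. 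The main (and essentially only) obstacle is matching these two ranges; there is no deeper combinatorial difficulty, but one must track the constants inside $\Theta(\cdot)$ carefully to ensure the ratio estimate $\alpha_j=\Theta(\alpha)$ is valid uniformly over the entire summation window.
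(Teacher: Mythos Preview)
Your proposal is correct and follows essentially the same route as the paper: compute the exact ratio $\Pr[X=d+1]/\Pr[X=d]$, simplify to extract $\alpha=\Theta(h/(mp))$, and then lower-bound $\Pr[X\geq d]$ by summing the geometric-type bound $\Pr[X=d+i]\geq(1-\Theta(\alpha))^{i}\Pr[X=d]$ over a window of length $\Theta(h)$, using $h=\Omega(\sqrt{mp})$ (equivalently $\alpha h=\Omega(1)$) to guarantee that enough terms contribute. The only cosmetic difference is that the paper sums the full geometric series $\sum_{k=0}^{h}(1-\alpha)^{k}$ and invokes $1-(1-\alpha)^{h}=\Theta(1)$, whereas you truncate at $i\leq 1/(C\alpha)$ and bound each retained term below by $e^{-1}\Pr[X=d]$; these are equivalent bookkeeping choices.
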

	
	\begin{proof}
		For the first part,
			\begin{align*}
			\frac {\Pr[X=d+1]}{\Pr[X=d]} &= \frac {m-d}{d+1}\cdot  \frac p{1-p}
			= \frac {m-mp-h}{mp+h+1}\cdot  \frac p{1-p}\\
			&= \frac {mp-mp^2-hp}{mp-mp^2-hp+h+1-p}\\
			&= 1-\frac {h+1-p}{mp-mp^2-hp+h+1-p}:=1-\alpha,
			\end{align*}
			where $\alpha=\Theta\left(\frac {h}{mp}\right)$.
			
			For the second pert, let $d\leq k\leq mp+2h$, then
		$\frac {\Pr[X=k+1]}{\Pr[X=k]} 
			\geq 1-\alpha$,
					where $\alpha=\Theta(\frac {h}{mp})$.
			Note that $\alpha\cdot h =\Omega(1)$ and thus $1-(1-\alpha)^h=\Theta(1)$. Therefore,
			\begin{align*}
			\Pr\left[X\geq d\right]
			&=\sum_{k= d}^{\infty}\Pr\left[X= k\right]
			\geq \sum_{k= d}^{mp+2h}\Pr\left[X= k\right]\\
			&\geq \sum_{k= d}^{mp+2h}(1-\alpha)^{k-d}\Pr\left[X= d\right]
			= \Pr\left[X= d\right]\sum_{k= 0}^{h}(1-\alpha)^{k}\\
			&\geq \frac {c}\alpha\cdot\Pr\left[\Bin\left(m,p\right)= d\right],
			\end{align*}
			
			for some constant $c>0$.
			Thus, $\Pr[\Bin(m,p)=d]= O(\alpha) \Pr[\Bin(m,p)\geq d]$ for $\alpha=\Theta(\frac {h}{mp})$.	
	\end{proof}

One of the most famous bounds on the tails of the binomial distribution, which we use extensively in this paper, is due to  Chernoff (see, e.g.,
\cite{AloSpe2008}, \cite{JLR}).  
%
%
 
\begin{lemma}\label{Che}
		Let $X\sim \Bin(n,p)$,	$\mu=\mathbb{E}(X)$ and $a\geq 0$, then
		\begin{enumerate}
			\item $\Pr\left[X\leq \mu-a\right]\leq \exp\left(-\frac{a^2}{2\mu}\right)$;
			\item $\Pr\left[X\geq \mu+a\right]\leq\exp\left(-\frac{a^2}{2(\mu+\frac a3)}\right)$.
		\end{enumerate}
\end{lemma}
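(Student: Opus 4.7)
The plan is to follow the classical Chernoff argument via the moment-generating function. Write $X=\sum_{i=1}^n X_i$ where the $X_i$ are i.i.d.\ Bernoulli$(p)$ indicators. Then for any real $t$ we have $\mathbb{E}[e^{tX}]=(1-p+pe^t)^n\leq\exp(np(e^t-1))=\exp(\mu(e^t-1))$, using the elementary inequality $1+x\leq e^x$ applied to $x=p(e^t-1)$. This is the only probabilistic ingredient; everything else is optimisation.

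For the upper tail, I would fix $t>0$ and apply Markov's inequality to $e^{tX}$, obtaining
\[
\Pr[X\geq\mu+a]\;=\;\Pr[e^{tX}\geq e^{t(\mu+a)}]\;\leq\;\exp\bigl(\mu(e^t-1)-t(\mu+a)\bigr).
\]
Minimising the right-hand side at $t=\log(1+a/\mu)$ gives the tight Bennett exponent $-\mu\varphi(a/\mu)$, where $\varphi(x)=(1+x)\log(1+x)-x$. To bring this into the shape stated in part (2), I would then invoke the calculus inequality $\varphi(x)\geq x^2/(2+2x/3)$ valid for $x\geq 0$, which is verified by checking that the difference vanishes to first order at $x=0$ and has nonnegative second derivative.

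For the lower tail, the same scheme with $t<0$ (equivalently, applying Markov's inequality to $e^{-sX}$ for $s>0$) and optimising at $s=-\log(1-a/\mu)$ (with the case $a>\mu$ being trivial as the probability is $0$) produces an exponent $-\mu\psi(a/\mu)$ with $\psi(x)=(1-x)\log(1-x)+x$. Here the auxiliary inequality $\psi(x)\geq x^2/2$ for $x\in[0,1]$, again by comparing derivatives at $x=0$, yields the cleaner bound $\exp(-a^2/(2\mu))$ without the $a/3$ correction. The asymmetry between parts (1) and (2) is precisely the asymmetry between these two inequalities for $\varphi$ and $\psi$.

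The main obstacle is nothing deep, only the verification of those two one-variable inequalities for $\varphi$ and $\psi$; these are completely standard. Indeed, since the lemma is a textbook statement the most efficient course of action is simply to cite \cite{AloSpe2008,JLR} as the paper already does, and that is what I would do in the final write-up.
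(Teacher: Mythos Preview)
Your proposal is correct and aligns with the paper's own treatment: the paper does not give a proof of this lemma at all but simply cites the standard references \cite{AloSpe2008,JLR}, exactly as you suggest doing in your final write-up. Your outlined moment-generating-function argument is the standard derivation found in those references, so there is nothing to add.
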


\begin{lemma}\label{Che0}
	Let $X\sim \Bin(n,p)$.
	 Then, $\Pr[X\geq k]\leq (\frac {{\rm{e}}np}{k})^k.$
\end{lemma}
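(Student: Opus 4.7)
The plan is to establish this by a direct union bound on $k$-subsets of the underlying Bernoulli trials, followed by the standard binomial coefficient estimate. Concretely, let $Y_1,\dots,Y_n$ be independent $\{0,1\}$-valued random variables with $\Pr[Y_i=1]=p$, so that $X=\sum_i Y_i \sim \Bin(n,p)$. The first step is to observe that the event $\{X\geq k\}$ is precisely the event that there exists some $k$-element subset $S\subseteq [n]$ with $Y_i=1$ for all $i\in S$.

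The second step is to apply the union bound over the $\binom{n}{k}$ choices of such a subset $S$. Since the $Y_i$ are independent, $\Pr[\forall i\in S:\ Y_i=1]=p^k$ for each fixed $S$, yielding
\[
\Pr[X\geq k]\ \leq\ \binom{n}{k}p^k.
\]

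The third step is the routine estimate $\binom{n}{k}\leq \frac{n^k}{k!}\leq \bigl(\frac{{\rm e}n}{k}\bigr)^k$, using $k!\geq (k/{\rm e})^k$ (which itself follows immediately from the Taylor series bound ${\rm e}^k\geq k^k/k!$). Plugging this into the previous display gives
\[
\Pr[X\geq k]\ \leq\ \Bigl(\frac{{\rm e}n}{k}\Bigr)^k p^k\ =\ \Bigl(\frac{{\rm e}np}{k}\Bigr)^k,
\]
which is exactly the claim.

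There is no real obstacle here: both ingredients (the union bound and the $k!\geq (k/{\rm e})^k$ inequality) are completely standard, and the lemma is essentially an immediate consequence. The only minor cosmetic point is that the bound is only interesting when $k\geq {\rm e}np$ (otherwise the right-hand side exceeds $1$ and the statement is vacuous), but since the lemma is stated as a general upper bound no case distinction is needed in the proof.
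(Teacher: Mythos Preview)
Your proof is correct and matches the paper's own argument essentially verbatim: bound $\Pr[X\geq k]\leq \binom{n}{k}p^k$ and then use $\binom{n}{k}\leq ({\rm e}n/k)^k$.
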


\begin{proof}
	Recall that $\binom n\ell\leq (\frac {{\rm{e}}n}\ell)^\ell$. Since $X\sim \Bin(n,p)$ we have $\Pr[X\geq k]\leq \binom nk p^k\leq (\frac {{\rm{e}}n}k)^kp^k$. 
\end{proof}
%

%

%
%

	The next lemma is due to DeMoivre-Laplace, and can be found in \cite{Bol98}.
	\begin{lemma}\label{lem:DeMoLap}
		Let $m$ be a sufficiently large integer and let $0<p:=p(m)<1$ such that $mp(1-p)\to\infty$. Let $X\sim \Bin(m,p)$ and let $d=mp+x\sqrt {mp(1-p)}$ where $x=o( \sqrt[6] {mp(1-p)})$ and $x\to \infty$. Then, $$\Pr[X\geq d]=(1+o(1)) {\frac 1{x\sqrt{2\pi}}}{\rm{e}}^{-x^2/2}={\rm{e}}^{-\frac {x^2}2-\log x-\frac 12\log (2\pi) +o(1)}.$$
	\end{lemma}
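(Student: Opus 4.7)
The plan is to prove this as a Laplace-type local-to-integral limit theorem for the binomial. Set $\sigma = \sqrt{mp(1-p)}$, so that $d = mp + x\sigma$ and the hypothesis becomes $1 \ll x \ll \sigma^{1/3}$. The strategy has two phases: first obtain a uniform local approximation $\Pr[X=k]\approx (\sigma\sqrt{2\pi})^{-1}\exp(-(k-mp)^2/(2\sigma^2))$ across a window containing $[d,\,mp+L\sigma]$ with $L=2x$, and then use a Riemann-sum argument plus the classical Gaussian tail asymptotic to convert the sum into the claimed expression.

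For the local approximation, I would start from $\Pr[X=k]=\binom{m}{k}p^k(1-p)^{m-k}$ and apply Stirling's formula to each factorial. Writing $q=k/m$, this gives
\[
\Pr[X=k]=\frac{1+o(1)}{\sqrt{2\pi m q(1-q)}}\,\exp\!\bigl(-m D(q\,\|\,p)\bigr),
\]
uniformly as long as $mq(1-q)\to\infty$, where $D(q\|p)=q\log(q/p)+(1-q)\log((1-q)/(1-p))$. A direct computation yields $D(p)=D'(p)=0$ and $D''(p)=1/(p(1-p))$, so substituting $q=p+t\sigma/m$ and Taylor expanding $D$ about $p$ gives $m\,D(p+t\sigma/m\,\|\,p)=t^2/2+O(t^3/\sigma)$. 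Combined with the prefactor $1/\sqrt{2\pi m q(1-q)}=(1+o(1))/(\sigma\sqrt{2\pi})$ (valid for $|t|=o(\sigma)$), this produces
\[
\Pr[X=mp+t\sigma]=\frac{1+o(1)}{\sigma\sqrt{2\pi}}\,e^{-t^2/2},
\]
uniformly for $|t|=o(\sigma^{1/3})$, which is precisely the range needed to cover $t\in[x,L]$.

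For the passage to the integral, split the tail as $\Pr[X\geq d]=\sum_{k=d}^{\lfloor mp+L\sigma\rfloor}\Pr[X=k]+\Pr[X\geq mp+L\sigma]$. The far tail is bounded by Chernoff (Lemma \ref{Che}): $\Pr[X\geq mp+L\sigma]\leq\exp(-\Omega(L^2))=\exp(-\Omega(x^2))$, which is $o(x^{-1}e^{-x^2/2})$ since $x\to\infty$. On the inner window the local estimate applies uniformly, so
\[
\sum_{k=d}^{\lfloor mp+L\sigma\rfloor}\Pr[X=k]=\frac{1+o(1)}{\sigma\sqrt{2\pi}}\sum_{k=d}^{\lfloor mp+L\sigma\rfloor}e^{-(k-mp)^2/(2\sigma^2)},
\]
which is a Riemann sum of step $1/\sigma$ for $\frac{1}{\sqrt{2\pi}}\int_x^{L} e^{-t^2/2}\,dt$. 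Since the integrand's logarithmic derivative is $-t$ with $|t|\leq L=o(\sigma)$, the relative Riemann error is $o(1)$. Extending the upper limit to $\infty$ introduces an error of at most $L^{-1}e^{-L^2/2}=o(x^{-1}e^{-x^2/2})$. Finally, integration by parts yields the standard asymptotic $\int_x^\infty e^{-t^2/2}\,dt=(1+o(1))\,x^{-1}e^{-x^2/2}$ as $x\to\infty$, giving the first form in the statement; the second follows by taking logarithms.

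The main obstacle is the careful bookkeeping of uniformity: the $(1+o(1))$ factor from Stirling and the cubic Taylor-remainder in the exponent must together remain $1+o(1)$ across every $k$ in the summation window $[d,mp+L\sigma]$, not just at a single $k$. The hypothesis $x=o(\sqrt[6]{mp(1-p)})=o(\sigma^{1/3})$ is calibrated exactly so that $t^3/\sigma=o(1)$ up to $t=L=2x$; any weakening would force one to retain the next (Edgeworth) correction in the local expansion, after which the clean Gaussian asymptotic on the right-hand side would no longer be valid.
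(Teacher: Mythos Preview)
Your argument is correct and follows the standard DeMoivre--Laplace route: Stirling gives the local limit theorem $\Pr[X=mp+t\sigma]=(1+o(1))(\sigma\sqrt{2\pi})^{-1}e^{-t^2/2}$ uniformly for $|t|=o(\sigma^{1/3})$, and a Riemann-sum/tail-truncation argument converts the sum into $\int_x^\infty e^{-t^2/2}\,dt/\sqrt{2\pi}$, which is then evaluated by the Mills-ratio asymptotic. The uniformity bookkeeping you flag is handled correctly by the choice $L=2x$ together with the hypothesis $x=o(\sigma^{1/3})$, which keeps the cubic remainder $O(t^3/\sigma)$ at $o(1)$ throughout the window.

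As to comparison with the paper: there is nothing to compare. The paper does not prove this lemma; it is stated with the attribution ``The next lemma is due to DeMoivre--Laplace, and can be found in~\cite{Bol98}'' and quoted without proof. Your write-up is essentially the textbook derivation that Bollob\'as gives, so supplying it is fine but not required in this context.
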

	
%

In the next two lemmas we show that the tail probabilities of two binomial random variables with close parameters are asymptotically equal. 
\begin{lemma}\label{lem:BinChange1}
	Let $m$ be a positive integer and let $p:=p(m)=o(1)$ such that $mp(1-p)\to\infty$. Let $t$ be an integer  and let $x$  be such that:
	\begin{itemize}
		\item $x=\frac {t-mp}{\sqrt{mp(1-p)}}\to \infty$,
		\item $x = o((mp(1-p))^{1/6})$,
	\end{itemize}   
	 Let $\alpha:=\alpha(m)$ and $\beta:=\beta (m)$ be such that $|\alpha|=o(1)$, $|\beta|=o(1)$, $m(1-\alpha)$ is an integer and $\frac {{(1-\alpha)(1+\beta)(1-p)}}{ {1-p(1+\beta)}}=1+o(\frac 1{x^2})$. Then, for $X\sim \Bin(m,p)$ and $X'\sim \Bin(m',p')$, 
	$$\Pr\left[X'\geq t' \right]= (1+o(1))\Pr[X\geq t],$$
	where $m'=m(1-\alpha)$, $p'=p{(1+\beta)}$, and $t'=\lceil t(1-\alpha)(1+\beta)\rceil$
\end{lemma}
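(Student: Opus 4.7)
The plan is to apply the DeMoivre--Laplace estimate (Lemma~\ref{lem:DeMoLap}) to both tails and compare the resulting exponents. Write $t = mp + x\sqrt{mp(1-p)}$ as in the hypothesis, and set
\[
x' = \frac{t' - m'p'}{\sqrt{m'p'(1-p')}},
\]
where $m' = m(1-\alpha)$ and $p' = p(1+\beta)$. First I would observe that the ceiling in the definition of $t'$ contributes at most $1/\sqrt{m'p'(1-p')} = o(1/x)$ to $x'$, which is negligible, so up to this additive $o(1/x)$ term we may treat $t' - m'p'$ as $(1-\alpha)(1+\beta)(t-mp)$.

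The central calculation is then
\[
x' = \frac{(1-\alpha)(1+\beta)(t-mp)}{\sqrt{m(1-\alpha)p(1+\beta)(1-p(1+\beta))}}
   = x\sqrt{\frac{(1-\alpha)(1+\beta)(1-p)}{1-p(1+\beta)}}\, (1 + o(1/x)),
\]
and by the hypothesis on $\alpha, \beta, p$ the square root equals $1 + o(1/x^2)$. Consequently $x' = x(1 + o(1/x^2))$, so $(x')^2 = x^2 + o(1)$ and $\log x' = \log x + o(1)$. Before invoking Lemma~\ref{lem:DeMoLap} for $X'$ I would also verify its hypotheses: since $\alpha, \beta, p = o(1)$ we have $m'p'(1-p') = (1+o(1))mp(1-p) \to \infty$, $x' \to \infty$, and $x' = o((m'p'(1-p'))^{1/6})$ because these quantities differ from the primed versions by a factor $1+o(1)$.

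Applying Lemma~\ref{lem:DeMoLap} twice now yields
\[
\Pr[X \geq t] = e^{-x^2/2 - \log x - \frac{1}{2}\log(2\pi) + o(1)}
\quad\text{and}\quad
\Pr[X' \geq t'] = e^{-(x')^2/2 - \log x' - \frac{1}{2}\log(2\pi) + o(1)},
\]
and the two exponents differ by $o(1)$, giving $\Pr[X' \geq t'] = (1+o(1))\Pr[X \geq t]$ as required.

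The only delicate point is the ``$1+o(1/x^2)$'' hypothesis: one needs this level of precision because the exponent of the leading Gaussian factor is of order $x^2$, so an error of size $o(1/x^2)$ in the ratio $x'/x$ is exactly what is needed to keep $(x')^2 - x^2 = o(1)$. I expect this to be the main subtlety — verifying that the cruder estimates $\alpha, \beta, p = o(1)$ do not already suffice, and that the sharper quantitative condition on the compound ratio is both used and sufficient. Everything else (controlling the ceiling, checking the Lemma~\ref{lem:DeMoLap} hypotheses for the primed variables, converting a $1+o(1/x^2)$ multiplicative error in $x'$ into $o(1)$ additive errors in $(x')^2$ and $\log x'$) is routine.
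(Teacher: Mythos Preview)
Your approach is exactly the paper's: compute $x'$, show it equals $x\sqrt{r}$ up to a small additive error with $r=\frac{(1-\alpha)(1+\beta)(1-p)}{1-p(1+\beta)}=1+o(1/x^2)$, and apply Lemma~\ref{lem:DeMoLap} to both tails. There is one arithmetic slip you should tighten: you bound the ceiling contribution to $x'$ only by $o(1/x)$, then write $x'=x\sqrt{r}\,(1+o(1/x))$, and from that and $\sqrt{r}=1+o(1/x^2)$ you conclude $x'=x(1+o(1/x^2))$. As written this does not follow, since $(1+o(1/x^2))(1+o(1/x))=1+o(1/x)$, which only gives $(x')^2=x^2+o(x)$. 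The fix is that the ceiling contribution is actually $O(1/\sqrt{m'p'(1-p')})=o(1/x^2)$, because the hypothesis $x=o((mp(1-p))^{1/6})$ gives $x^2=o((mp(1-p))^{1/3})=o(\sqrt{mp(1-p)})$; this is precisely the bound the paper records. With that in place your chain $x'=x\sqrt{r}+o(1/x^2)=x(1+o(1/x^2))$, hence $(x')^2=x^2+o(1)$ and $\log x'=\log x+o(1)$, is correct and matches the paper line for line.
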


\begin{proof}
	Set $a=t'-t(1-\alpha)(1+\beta)<1$ and write 
	$$x'=\frac {t'-m'p'}{\sqrt {m'p'(1-p')}}=\frac {t-mp+\frac a{(1-\alpha)(1+\beta)}}{\sqrt {mp(1-p)}}\cdot \sqrt{\frac {{(1-\alpha)(1+\beta)}}{\frac {1-p(1+\beta)}{1-p}}}=x\cdot \sqrt{\frac {{(1-\alpha)(1+\beta)(1-p)}}{ {1-p(1+\beta)}}}+o\left(\frac 1{x^2}\right).$$
	Since $x'=(1+o(1))x$, by Lemma \ref{lem:DeMoLap} we deduce,
	\begin{align*}
	\Pr\left[X'\geq t'\right]={\rm{e}}^{-\frac {x^2}2\left(\frac {{(1-\alpha)(1+\beta)(1-p)}}{ {1-p(1+\beta)}}\right)-\log x-\frac 12\log \left(\frac {{(1-\alpha)(1+\beta)(1-p)}}{ {1-p(1+\beta)}}\right)-\frac 12\log (2\pi) +o(1)}. 
	\end{align*}
	Note that $\log \left(\frac {{(1-\alpha)(1+\beta)(1-p)}}{ {1-p(1+\beta)}}\right)=o(1)$ and recall that $\frac {{(1-\alpha)(1+\beta)(1-p)}}{ {1-p(1+\beta)}}=1+o(\frac 1{x^2})$. Thus, again by Lemma~\ref{lem:DeMoLap},
	\[
	\Pr\left[X'\geq t'\right]={\rm{e}}^{-\frac {x^2}2-\log x-\frac 12\log (2\pi) +o(1)}=(1+o(1))\Pr[X\geq t]. 
\qedhere\]
\end{proof}

\begin{lemma}\label{BinEq}
	Let $n$ be an integer and $n'\leq n$ such that $n'=n(1-o(1))$. Let $0\leq p:=p(n)\leq 1$ and $p'\leq p$ such that $p'=p(1-o(1))$. Let $k\leq n'$ be an integer. Let $X\sim \Bin (n,p)$. Assume there exists an integer $s$ such that  $k\leq s\leq n'$, $\Pr[X\geq s]=o(\Pr[X\geq k])$, $\frac {n-n'}{n'}=o(\frac 1s)$, and $\frac {p-p'}p=o(\frac 1s)$. Then for  $X'\sim \Bin(n',p')$ we have $\Pr[X\geq k]=(1+o_n(1))\Pr[X'\geq k]$.
	\end{lemma}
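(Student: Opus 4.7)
The plan is to compare the two tails by a term-by-term analysis of the probability mass functions on the range $[k,s-1]$, and use the tail hypothesis to absorb the mass beyond $s$ into error terms.

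The main step is to show that uniformly for $k\le j\le s-1$,
\[
\frac{\Pr[X'=j]}{\Pr[X=j]}=\prod_{i=0}^{j-1}\frac{n'-i}{n-i}\cdot\left(\frac{p'}{p}\right)^{\!j}\cdot\frac{(1-p')^{n'-j}}{(1-p)^{n-j}}=1+o(1).
\]
Each of the three factors is treated separately. Using $(n-n')/n'=o(1/s)$ one writes $(n'-i)/(n-i)=1-(n-n')/(n-i)$ and sums to get a logarithmic defect of at most $j\cdot(n-n')/(n-j)\le s\cdot o(1/s)\cdot O(1)=o(1)$, provided $n-j=\Theta(n)$ in the relevant regime. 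The middle factor equals $(1-(p-p')/p)^{j}=1+j\cdot o(1/s)=1+o(1)$. The third factor splits as $(1-p)^{-(n-n')}\cdot(1+(p-p')/(1-p))^{n'-j}$; the logarithm of each piece is of order $o(np/s)$, which is $o(1)$ because the requirement $\Pr[X\ge s]=o(\Pr[X\ge k])\le 1$ forces $\Pr[X\ge s]=o(1)$, and therefore $s$ must lie at least near the mean $np$, so $np/s=O(1)$.

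Granted this uniform estimate, summing over $j\in[k,s-1]$ gives
\[
\Pr[k\le X'\le s-1]=(1+o(1))\bigl(\Pr[X\ge k]-\Pr[X\ge s]\bigr)=(1+o(1))\Pr[X\ge k],
\]
by the tail hypothesis. Finally, since $n'\le n$ and $p'\le p$, the variable $X'$ is stochastically dominated by $X$, so $\Pr[X'\ge s]\le\Pr[X\ge s]=o(\Pr[X\ge k])$. Combining,
\[
\Pr[X'\ge k]=\Pr[k\le X'\le s-1]+\Pr[X'\ge s]=(1+o(1))\Pr[X\ge k],
\]
as required.

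The main obstacle is the uniform PMF ratio bound on the entire range $[k,s-1]$: one must check that neither $n-j$ collapses nor the cumulative corrections in any of the three factors exceed $o(1)$. In the regime where this lemma will be applied, $s$ exceeds $np$ only by at most a polylogarithmic factor and $p=o(1)$, so $n-j=\Theta(n)$ and $1-p=\Theta(1)$ throughout, and the estimates above go through routinely.
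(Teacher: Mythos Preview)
Your approach is essentially the same as the paper's: compare the PMFs term by term on $[k,s]$, absorb the remainder using $\Pr[X\ge s]=o(\Pr[X\ge k])$, and invoke stochastic domination. The one substantive difference is that you prove a \emph{two-sided} estimate $\Pr[X'=j]/\Pr[X=j]=1+o(1)$, whereas the paper only proves the one-sided bound $\Pr[X'=j]\ge(1-o(1))\Pr[X=j]$ and gets the reverse inequality $\Pr[X'\ge k]\le\Pr[X\ge k]$ directly from domination. Because of this, the paper can simply drop $(1-p')^{n'-j}$ to $(1-p)^{n-j}$ using $p'\le p$ and $n'\le n$, and never needs to control that third factor at all; it therefore does not need the side conditions $np/s=O(1)$, $1-p=\Theta(1)$, or $n-s=\Theta(n)$ that you flag at the end. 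Your argument is correct in the regimes where the lemma is applied in the paper, but the paper's one-sided version is cleaner and proves the lemma as stated without those caveats.
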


\begin{proof}
	 The direction $\Pr[X\geq k]\geq \Pr[X'\geq k]$ follows by Observation~\ref{obs:BinDec}. For showing the other direction,  
	recall that $\Pr[X\geq s]=o(\Pr[X\geq k])$, and thus $\sum_{d=k}^{s}\binom {n}d p^d(1-p)^{n-d}=(1-o(1))\sum_{d=k}^{n}\binom {n}d p^d(1-p)^{n-d}$. By the assumptions we also have that ${\rm{e}}^{-2s(\frac{n-n'}{n'}+\frac{p-p'}p)}=(1+o(1))$. Therefore,
		\begin{align*}
		\Pr[X'\geq k]&=\sum_{d=k}^{n'}\binom {n'}d (p')^d(1-p')^{n'-d}
		\geq	\sum_{d=k}^{n'}\binom {n'}d (p')^d(1-p)^{n-d}\\
		&=	\sum_{d=k}^{n'}\binom {n}d p^d(1-p)^{n-d}\cdot \frac {(n-d)(n-d-1)\dots (n'-d+1)}{n(n-1)\dots(n'+1)}\cdot \left(\frac {p'}p \right)^d\\
		&\geq \sum_{d=k}^{n'}\binom {n}d p^d(1-p)^{n-d}\cdot \left(\frac {n'-d}{n'}\right)^{n-n'}\cdot \left(\frac {p'}p \right)^d\\
		&\geq\sum_{d=k}^{n'}\binom {n}d p^d(1-p)^{n-d}\cdot {\rm{e}}^{-2d\frac{n-n'}{n'}}\cdot {\rm{e}}^{-2d\frac{p-p'}p} 
		\geq\sum_{d=k}^{s}\binom {n}d p^d(1-p)^{n-d}\cdot {\rm{e}}^{-2d(\frac{n-n'}{n'}+\frac{p-p'}p)} \\
		&\geq {\rm{e}}^{-2s(\frac{n-n'}{n'}+\frac{p-p'}p)}\sum_{d=k}^{s}\binom {n}d p^d(1-p)^{n-d}
		= (1-o(1))\sum_{d=k}^{s}\binom {n}d p^d(1-p)^{n-d}\\
		&= (1-o(1))\sum_{d=k}^{n}\binom {n}d p^d(1-p)^{n-d}
		=(1-o(1))\Pr[X\geq k]. \qedhere
		\end{align*}
	\end{proof}

\vspace{3mm}
\subsection{Properties of random graphs and random multigraphs}\label{sec:Prop}

We start with two simple properties of random graphs.
The first claim shows that in the sparse regime, a typical random graph has no cycles. We leave the proof of this claim as an (easy) exercise.

\begin{claim}\label{GnpForest}
	Let $p=o(\frac 1n)$ then \whp $G\sim G(n,p)$ is a forest.
\end{claim}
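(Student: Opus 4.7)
The plan is to use the first moment method, bounding the expected number of cycles in $G \sim G(n,p)$ and applying Markov's inequality. Since being a forest is equivalent to containing no cycle, it suffices to show that the expected total number of cycles tends to zero.

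First I would count, for each fixed $k \geq 3$, the expected number of cycles of length $k$ in $G(n,p)$. The number of potential $k$-cycles on a labeled vertex set of size $n$ is $\binom{n}{k}\frac{(k-1)!}{2}$, and each such cycle appears in $G(n,p)$ with probability exactly $p^k$ (independence of edges). Thus
\[
\mathbb{E}[X_k] = \binom{n}{k}\frac{(k-1)!}{2}\, p^k \leq \frac{n^k p^k}{2k},
\]
where $X_k$ denotes the number of $k$-cycles in $G$.

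Next I would sum over $k$ from $3$ to $n$. Using the hypothesis $p = o(1/n)$, i.e.\ $np = o(1)$, one gets
\[
\mathbb{E}\Bigl[\sum_{k=3}^{n} X_k\Bigr] \leq \sum_{k=3}^{n} \frac{(np)^k}{2k} \leq \sum_{k=3}^{\infty} (np)^k = \frac{(np)^3}{1-np} = o(1).
\]
By Markov's inequality, the probability that $G$ contains at least one cycle is $o(1)$, so w.h.p.\ $G$ is acyclic and hence a forest. There is no real obstacle here; the only mild point to watch is that the series must be summable, which is guaranteed by $np = o(1)$ (otherwise the geometric tail would not converge). The whole argument fits in a few lines, which is consistent with the excerpt calling it an easy exercise.
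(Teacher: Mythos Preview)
Your proof is correct; this is exactly the standard first-moment argument one expects here. The paper does not actually prove this claim but leaves it as an easy exercise, so there is nothing to compare against beyond noting that your approach is the intended one.
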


The following theorem is a standard result regarding the maximum degree of a random graph in the relatively dense regime.

\begin{theorem}[See, e.g., Theorem 3.9 in \cite{Bol98}]\label{GnpUniMax}
	Let $p\leq \frac 12$ and $p=\omega(\frac {\log n}n)$. Then \whp $G\sim G(n,p)$ has a unique vertex of maximum degree.
\end{theorem}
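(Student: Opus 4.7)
The plan is a moment-method argument on the degree sequence. For each vertex $v$, the degree $d_v$ is distributed as $\Bin(n-1,p)$ (with mild cross-vertex correlations from shared edges). Set $Y_d:=|\{v:d_v=d\}|$, $N_d:=|\{v:d_v\geq d\}|$, $\mu_d:=\mathbb{E}[Y_d]=n\Pr[\Bin(n-1,p)=d]$, and $\lambda_d:=\mathbb{E}[N_d]=n\Pr[\Bin(n-1,p)\geq d]$. By the Chernoff bound (Lemma~\ref{Che}) together with the hypothesis $p=\omega(\log n/n)$, the degrees concentrate around $(n-1)p$, and a standard first/second moment argument on $N_d$ shows that $\Delta(G)$ lies in a short window around the critical value $d^*$ at which $\lambda_{d^*}$ transitions from $\omega(1)$ to $o(1)$; explicitly $d^*=(n-1)p+\Theta(\sqrt{np\log n})$.

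The key structural fact, used throughout, is that for distinct $u,v\in V(G)$, conditioning on whether $uv\in E(G)$ makes the residual degrees $d_u-\mathbf{1}[uv\in E(G)]$ and $d_v-\mathbf{1}[uv\in E(G)]$ independent $\Bin(n-2,p)$ random variables. This gives the explicit identity
\[
\Pr[d_u=d_v=d]=p\cdot\Pr[\Bin(n-2,p)=d-1]^2+(1-p)\cdot\Pr[\Bin(n-2,p)=d]^2,
\]
from which both the variance of $N_d$ and a first moment bound on the number of same-degree pairs follow.

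The non-uniqueness event decomposes as $\bigcup_d\{Y_d\geq 2,\,N_{d+1}=0\}$, and I would bound
\[
\Pr[\Delta(G)\text{ is not uniquely attained}]\leq \sum_d \Pr[Y_d\geq 2,\,N_{d+1}=0]\lesssim \sum_d \mu_d^2\,e^{-\lambda_{d+1}},
\]
using the first-moment estimate $\Pr[Y_d\geq 2]\leq \binom{n}{2}\Pr[d_u=d_v=d]\lesssim \mu_d^2$ from the identity above, combined with a Poisson-type estimate $\Pr[N_{d+1}=0]\approx e^{-\lambda_{d+1}}$. Letting $\rho_d:=\lambda_{d+1}/\lambda_d$ denote the one-step tail ratio, one has $\mu_d\asymp (1-\rho_d)\lambda_d$, so the contribution from the window $\{d:\lambda_d=\Theta(1)\}$ (of length $\Theta(1/(1-\rho))$) is $O(1-\rho)$. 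Outside this window, either $e^{-\lambda_{d+1}}$ (for $d\ll d^*$) or $\mu_d$ (for $d\gg d^*$) decays fast enough to be negligible. Under the hypotheses $p=\omega(\log n/n)$ and $p\leq 1/2$ we have $np(1-p)=\omega(\log n)$, which via sharp local estimates (Lemma~\ref{lem:DeMoLap}, or Observation~\ref{obs:BinFrac} applied above the mean) yields $1-\rho=\Theta(\sqrt{\log n/(np(1-p))})=o(1)$ at $d^*$, so the total sum is $o(1)$.

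The main obstacle is justifying the Poisson-type estimate $\Pr[N_{d+1}=0]\approx e^{-\lambda_{d+1}}$, since the events $\{d_v\geq d+1\}$ are positively correlated across $v$ through shared edges. The edge-conditioning identity above pins the pairwise covariance down to $p(1-p)\Pr[\Bin(n-2,p)=d]^2$, which is small enough compared to $\lambda_{d+1}^2$ in this regime to allow a Brun-sieve or Janson-type inequality to carry the Poisson approximation through; the same covariance estimate also drives the concentration of $N_d$ in the first step.
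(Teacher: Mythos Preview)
The paper does not give its own proof of this theorem: it is quoted as a known result (Theorem~3.9 in Bollob\'as), so there is nothing in the paper to compare your argument against line by line. Your outline is essentially the standard textbook proof and is sound in spirit, so it is an acceptable substitute.

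One point to tighten: the inequality $\Pr[Y_d\geq 2,\,N_{d+1}=0]\lesssim \mu_d^2\,e^{-\lambda_{d+1}}$ is written as a product, but $\{Y_d\geq 2\}$ and $\{N_{d+1}=0\}$ are not independent and no general inequality gives the product directly. What actually works (and is what you implicitly use anyway) is to split by regime and take the minimum: for $d$ below the critical window use $\Pr[N_{d+1}=0]$ alone, for $d$ above it use $\Pr[Y_d\geq 2]\lesssim\mu_d^2$ alone, and in the window of length $\Theta(1/(1-\rho))$ the bound $\Pr[Y_d\geq 2]\lesssim\mu_d^2=\Theta((1-\rho)^2)$ already sums to $O(1-\rho)=o(1)$ without any help from the Poisson factor. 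With that adjustment the argument is clean, and the Janson/Brun justification you mention for $\Pr[N_{d+1}=0]$ is only needed below the window.

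For context, the closest thing in the paper to a proof of this statement is the multigraph analogue Lemma~\ref{lem0:DegDiff}, which proceeds somewhat differently: rather than decomposing over the exact value of $\Delta$, it first pins $d_1,d_2$ into a window $[d_-,d_+]$ and then bounds directly, via a union bound over pairs and a pointwise ratio estimate (Claim~\ref{claim:alphaBin}), the probability that any two degrees in the window lie within $t$ of each other. That route avoids the Poisson approximation for $\Pr[N_{d+1}=0]$ entirely and yields the quantitatively stronger conclusion that $d_1-d_2$ is large, not merely positive.
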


Recall that in the $M(n,m)$ model each multigraph has vertex set $[n]$ and $m$ pairs from $[n]$ are chosen independently, uniformly at random. 

\begin{observation}
	For any pair $u,v\in [n]$, $\mu(uv)\sim \emph{Bin}(m,\frac 1{\binom n2})$.
\end{observation}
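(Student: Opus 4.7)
The plan is to verify this observation directly from the definition of the $M(n,m)$ model, since it is essentially unpacking the sampling procedure. First I would fix a pair $\{u,v\}\in\binom{[n]}{2}$ and introduce indicator random variables $X_1,\dots,X_m$, where $X_i=1$ if the $i$-th sampled pair equals $\{u,v\}$ and $X_i=0$ otherwise. Since $\mu(uv)$ counts the number of times the pair $\{u,v\}$ appears among the $m$ sampled pairs, we have $\mu(uv)=\sum_{i=1}^m X_i$.

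Next I would observe that each $X_i$ is a Bernoulli random variable with success probability $\frac{1}{\binom{n}{2}}$. This follows because the $i$-th sampled pair is drawn uniformly from $\binom{[n]}{2}$, so the probability it is the particular pair $\{u,v\}$ is exactly $1/\binom{n}{2}$. The fact that the model draws the $m$ pairs \emph{independently} (with repetitions) makes $X_1,\dots,X_m$ mutually independent. Hence $\mu(uv)$ is a sum of $m$ i.i.d. Bernoulli$\bigl(1/\binom{n}{2}\bigr)$ variables, which by definition has the $\mathrm{Bin}\bigl(m,1/\binom{n}{2}\bigr)$ distribution.

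There is essentially no obstacle: the only thing to be careful about is making sure one interprets "pairs from $[n]$" in the model's definition as unordered pairs (elements of $\binom{[n]}{2}$), so that the uniform distribution puts mass $1/\binom{n}{2}$ on each $\{u,v\}$; this is consistent with the model described in the introduction. Given this, the observation is an immediate consequence of the definition of the binomial distribution as the distribution of a sum of independent identically distributed Bernoulli trials, and no further work is needed.
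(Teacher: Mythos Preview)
Your proposal is correct and is exactly the natural unpacking of the definition of $M(n,m)$; the paper states this as an observation without proof, and your argument via i.i.d.\ Bernoulli indicators is the canonical justification the authors are implicitly invoking.
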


%

\begin{observation}
	For any vertex $v\in [n]$, $d(v)\sim \emph{Bin}(m,\frac {n-1}{\binom n2}=\frac 2n)$.
\end{observation}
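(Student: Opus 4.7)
The plan is essentially immediate: express $d(v)$ as a sum of $m$ i.i.d.\ Bernoulli indicators and read off the distribution from the definition of the model.

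First I would unpack $M(n,m)$: the multigraph $M$ is built by sampling $m$ pairs $e_1,\ldots,e_m$ from $\binom{[n]}{2}$ independently and uniformly at random, and $E(M)$ is the multiset $\{e_1,\ldots,e_m\}$ with repetitions recorded. Under the paper's convention $d_G(v)=|\{e\in E(G):v\in e\}|$, which counts multiplicities, this gives the identity
\[
d(v)\;=\;\sum_{i=1}^{m}\mathbf{1}[v\in e_i].
\]
This representation is the only point where one must be careful: a pair $\{u,v\}$ sampled twice contributes $2$ to $d(v)$, which is exactly what the indicator sum records.

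Next I would compute the distribution of a single indicator. For fixed $v\in[n]$, the number of pairs in $\binom{[n]}{2}$ that contain $v$ is $n-1$, and each pair is equally likely, so
\[
\Pr[v\in e_i]\;=\;\frac{n-1}{\binom{n}{2}}\;=\;\frac{2}{n}.
\]
Finally, since the pairs $e_1,\ldots,e_m$ are mutually independent by the very definition of $M(n,m)$, the indicators $\mathbf{1}[v\in e_i]$ are mutually independent Bernoulli$(2/n)$ variables. Their sum is therefore exactly $\mathrm{Bin}(m,2/n)$, which is the claim. There is no real obstacle; the whole content lies in recognising that the i.i.d.\ uniform sampling of edges transfers verbatim to an i.i.d.\ Bernoulli structure on the incidence indicators at $v$.
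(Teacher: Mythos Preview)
Your argument is correct and is precisely the natural justification the paper has in mind: the observation is stated in the paper without proof because it is immediate from the definition of $M(n,m)$, and you have simply spelled out that immediate step by writing $d(v)=\sum_{i=1}^m \mathbf{1}[v\in e_i]$ as a sum of $m$ independent Bernoulli$\bigl(\tfrac{n-1}{\binom n2}\bigr)$ indicators.
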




%

In the following observation one can find a typical upper bound for the maximum degree of a random multigraph.

\begin{observation}\label{obs0:maxdeg}
	Let $m\geq \frac n{\log \log n}$ and $M\sim \mnm$. Then \whp $\Delta(M)\leq \frac {2m\log^2n}n$. Moreover, $\Pr\left[d(v)>\frac {2m\log^2 n}n\right]<n^{-5}$ for $v\in [n]$.
\end{observation}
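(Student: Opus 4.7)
The plan is to prove the second (quantitative) statement first, since the first assertion then follows immediately by a union bound over the $n$ vertices. Fix a vertex $v\in[n]$. As recorded in the observation preceding this one, the degree of $v$ in $M\sim M(n,m)$ is distributed as $d(v)\sim\mathrm{Bin}\bigl(m,\tfrac{2}{n}\bigr)$, so $\mathbb{E}[d(v)]=\tfrac{2m}{n}$. Set $k=\tfrac{2m\log^2 n}{n}=\log^2 n\cdot\mathbb{E}[d(v)]$, which is the target threshold.

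The cleanest way to finish is to apply Lemma~\ref{Che0} directly: with $X\sim\mathrm{Bin}(m,2/n)$ and the integer threshold $k$,
\[
\Pr[d(v)\geq k]\;\leq\;\left(\frac{\mathrm{e} m\cdot 2/n}{k}\right)^{k}\;=\;\left(\frac{\mathrm{e}}{\log^{2}n}\right)^{\!k}.
\]
Taking logarithms, this bound is at most $\exp\!\bigl(-k(2\log\log n - 1)\bigr)$. Substituting $k=\tfrac{2m\log^2 n}{n}$, the exponent becomes
\[
-\,\frac{2m\log^2 n}{n}\,(2\log\log n - 1),
\]
and we need this to be at most $-5\log n$. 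Using the hypothesis $m\geq n/\log\log n$, the quantity $\tfrac{2m\log^2 n}{n}(2\log\log n-1)$ is at least $(4+o(1))\log^2 n$, which dominates $5\log n$ for $n$ large. Hence $\Pr[d(v)>\tfrac{2m\log^2 n}{n}]<n^{-5}$, establishing the moreover clause.

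For the first assertion, a union bound over the $n$ vertices gives
\[
\Pr\!\left[\Delta(M)>\frac{2m\log^2 n}{n}\right]\;\leq\;n\cdot n^{-5}\;=\;n^{-4}\;=\;o(1),
\]
so whp $\Delta(M)\leq\tfrac{2m\log^2 n}{n}$, as required.

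There is no real obstacle here; the only care needed is in checking that the lower bound $m\geq n/\log\log n$ is indeed strong enough to make the Chernoff/binomial tail at the deviation level $\log^2 n\cdot \mathbb{E}[d(v)]$ beat $n^{-5}$. The slackness in the calculation above (one gets roughly $n^{-\Theta(\log n/\log\log n)}$) confirms that the chosen lower bound on $m$ is comfortable. Alternatively, one could invoke Lemma~\ref{Che}(2) with $\mu=2m/n$ and $a=\mu(\log^2 n-1)$, which yields the same conclusion with essentially the same arithmetic.
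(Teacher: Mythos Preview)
Your proof is correct and follows essentially the same route as the paper: both apply Lemma~\ref{Che0} to $d(v)\sim\mathrm{Bin}(m,2/n)$ at the threshold $k=\tfrac{2m\log^2 n}{n}$, use the hypothesis $m\geq n/\log\log n$ to bound the exponent, and finish with a union bound over the $n$ vertices. Your write-up is in fact more explicit about the arithmetic than the paper's (which simply records the bound $\mathrm{e}^{-2\log^2 n/\log\log n}<n^{-5}$); the only cosmetic slip is that ``at least $(4+o(1))\log^2 n$'' should read ``at least $(4-o(1))\log^2 n$'', which does not affect the conclusion.
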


\begin{proof}
	Since $d(v)\sim \Bin(m,\frac 2n)$ we have by Lemma \ref{Che0},
	$$\Pr\left[d(v)>\frac {2m\log^2 n}n\right]\leq {\rm{e}}^{-\frac {2\log^2n}{\log\log n}}<\frac 1{n^5}.$$ By the union bound the claim follows.
\end{proof}


The next observation refers to the  edge-multiplicities of a random multigraph for different values of $m$. 

\begin{observation}\label{obs:multiplicity}
	Let $n$ ne an integer and $m:=m(n)$. Let $M\sim \mnm$. Then the following hold.
	\begin{enumerate}[$(1)$]
		\item If $m\leq n\log^{100} n$, then 
		the probability that there exists an edge $e$ with $\mu(e)\geq 3$ is at most $O(\frac {\log^{300}n}n)$.
		\item If $m< n^2\log^2 n$, then with probability $1-o(\frac 1{n^4})$, for every $e\in \binom {[n]}2$, one has $\mu(e)\leq 30\log^2 n$.
		\item If $m=\omega (n^2\log n)$, then with probability $1-o(\frac 1{n^4})$, for every $e\in \binom {[n]}2$, one has $\mu(e)\in \frac m{\binom n2}\pm 4\sqrt {\frac m{\binom n2}\log n}$.
	\end{enumerate}
\end{observation}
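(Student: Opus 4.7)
My plan is to treat each of the three statements by first analyzing the distribution of the multiplicity $\mu(e)$ for a single fixed pair $e \in \binom{[n]}{2}$, and then applying a union bound over all $\binom{n}{2}$ pairs. Throughout, I use that $\mu(e) \sim \Bin(m, 1/\binom{n}{2})$, with mean $\lambda := m/\binom{n}{2}$ which is $\Theta(m/n^2)$.

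For part (1), since $\lambda = o(1)$ in this regime, the natural tool is Lemma \ref{Che0}. With $k=3$ it gives $\Pr[\mu(e) \geq 3] \leq ({\rm e} m \lambda /(3m))^3 \cdot m^3/\lambda^0 \cdot$ — more cleanly, $\Pr[\mu(e)\geq 3]\leq ({\rm e}\cdot m\cdot (2/n^2)/3)^3 = O(m^3/n^6)$. A union bound over the $\binom{n}{2}$ pairs yields $O(m^3/n^4)$, and substituting $m\leq n\log^{100}n$ produces the claimed $O(\log^{300}n/n)$ bound.

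For part (2), I would again invoke Lemma \ref{Che0}, this time with $k = 30\log^2 n$. Since $m < n^2 \log^2 n$ gives $\lambda < 2\log^2 n$, the ratio $\mathrm{e} m \cdot (2/n^2)/k$ is bounded by a constant strictly less than $1$ (roughly $2{\rm e}/30 < 1/5$). Hence $\Pr[\mu(e)\geq 30\log^2 n]\leq (1/5)^{30\log^2 n}$, which is $n^{-\Omega(\log n)}$ and easily beats the union bound factor $\binom{n}{2}$, giving $o(1/n^4)$.

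For part (3), the regime $m = \omega(n^2 \log n)$ forces $\lambda = \omega(\log n)$, so I expect sharp concentration around the mean and would apply the Chernoff bound of Lemma \ref{Che}. Setting $a = 4\sqrt{\lambda \log n}$, the lower tail gives $\exp(-a^2/(2\lambda)) = \exp(-8\log n) = n^{-8}$, and the upper tail gives the same bound up to a $(1+o(1))$ factor in the exponent, since $a/\lambda = o(1)$ makes the correction in $\lambda + a/3$ negligible. A union bound over $\binom{n}{2}$ pairs then yields $O(n^{-6})=o(n^{-4})$. None of the three parts appears to present a real obstacle; the only care needed is checking that the constants in the exponents are generous enough to absorb the $\binom{n}{2}$ factor and still give $o(1/n^4)$.
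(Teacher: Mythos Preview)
Your proposal is correct and follows essentially the same approach as the paper: for parts (1) and (2) you use the crude binomial tail bound of Lemma~\ref{Che0}, for part (3) you use the Chernoff bound of Lemma~\ref{Che} with $a=4\sqrt{\lambda\log n}$, and in every case you finish with a union bound over the $\binom{n}{2}$ pairs. The paper carries out exactly these steps with the same constants; the only cosmetic difference is that the paper records the single-pair bound in part (1) as $O(\log^{300}n/n^3)$ directly rather than via $O(m^3/n^6)$, and in part (3) it states the upper-tail exponent as $-7\log n$ after absorbing the $a/3$ correction, whereas you simply note the correction is $(1+o(1))$.
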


\begin{proof}
	First 	Recall that $\mu(e)\sim \Bin(m,\binom n2 ^{-1})$. Assume that  $m\leq n\log^{100} n$.
		By Lemma~\ref{Che0}, 
		$$\Pr[\mu(e)\geq 3]
		=O\left(\frac {\log^{300}n}{n^3}\right).$$
		Then by the union bound, the probability that there exists an edge $e$ with $\mu(e)\geq 3$ is at most $O(\frac {\log^{300}n}n)$.
		
		Next, assume that $m< n^2\log^2 n$. 
	 Then $\mathbb E(\mu(e))<3\log^2n$, and by Lemma \ref{Che0} we have
		$$\Pr\left[\mu (e)>30\log^2n\right]\leq {\rm{e}}^{-30\log^2n}=o\left(\frac 1{n^6}\right),$$
		and by the union bound Item $(2)$ follows.
		
		Finally, assume that $m=\omega (n^2\log n)$. Take $a=4\sqrt {\frac m{\binom n2}\log n}$, by Lemma \ref{Che},
		$$\Pr\left[\mu(e)\geq \mathbb E(\mu(e))+a\right]\leq {\rm{e}}^{-\frac {a^2}{2(\mathbb E(X)+a/3)}}={\rm{e}}^{-\frac {16\log n\mathbb E(X)}{2(\mathbb E(X)+o(\mathbb E(X)))}}\leq {\rm{e}}^{-7\log n}=o\left(\frac 1{n^6}\right).$$
		By the union bound, with probability $1-o(\frac 1{n^4})$, for every $e\in \binom {[n]}2$ we have $\mu(e)\leq \mathbb E(\mu(e))+a=\frac m{\binom n2}+4\sqrt {\frac m{\binom n2}\log n}$.
		
		Similarly, by Lemma \ref{Che},
		$$\Pr\left[\mu(e)\leq \mathbb E(\mu(e))-a\right]\leq {\rm{e}}^{-\frac {a^2}{2\mathbb E(X)}}={\rm{e}}^{-\frac {16\log n\mathbb E(X)}{2\mathbb E(X)}}\leq {\rm{e}}^{-7\log n}=o\left(\frac 1{n^6}\right).$$
		By the union bound, with probability $1-o(\frac 1{n^4})$, for every $e\in \binom {[n]}2$ we have $\mu(e)\geq \mathbb E(\mu(e))-a=\frac m{\binom n2}-4\sqrt {\frac m{\binom n2}\log n}$. 
\end{proof}

%
%
%
%
\begin{corollary}\label{cor:DifMu}
	Let $m=\omega (n^2\log n)$ and $M\sim \mnm$ and let $\mu(M)=\max\{\mu(e) \mid e\in \binom {[n]}2 \}$, $\mu_{min}=\min\{\mu(e) \mid e\in \binom {[n]}2 \}$. Then \whp $\mu(M)-\mu_{min}=O\left(\sqrt {\frac{m\log n}{n^2}}\right)$.
\end{corollary}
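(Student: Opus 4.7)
The plan is to derive this corollary as an essentially immediate consequence of Observation~\ref{obs:multiplicity}(3). Since we are in the regime $m=\omega(n^2\log n)$, that observation tells us that with probability $1-o(1/n^4)$, every edge $e\in\binom{[n]}{2}$ satisfies
\[
\mu(e)\in\frac{m}{\binom{n}{2}}\pm 4\sqrt{\frac{m}{\binom{n}{2}}\log n}.
\]
In particular, both the maximizer $e_{\max}$ achieving $\mu(M)$ and the minimizer $e_{\min}$ achieving $\mu_{\min}$ lie in this window simultaneously.

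The key step is then just to subtract: on this high-probability event,
\[
\mu(M)-\mu_{\min}\;\leq\;\left(\frac{m}{\binom{n}{2}}+4\sqrt{\frac{m}{\binom{n}{2}}\log n}\right)-\left(\frac{m}{\binom{n}{2}}-4\sqrt{\frac{m}{\binom{n}{2}}\log n}\right)=8\sqrt{\frac{m}{\binom{n}{2}}\log n}.
\]
Using $\binom{n}{2}=\Theta(n^2)$, this quantity is $O\bigl(\sqrt{m\log n/n^2}\bigr)$, which is exactly the bound claimed in the corollary.

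There is essentially no obstacle here: the corollary is purely a restatement of the two-sided concentration in Observation~\ref{obs:multiplicity}(3). The only thing to note is that a single application of that observation handles both tails simultaneously via the union bound (which was already folded into its proof), so we do not need to worry about correlations between $e_{\max}$ and $e_{\min}$. Thus a one- or two-line argument of the form ``apply Observation~\ref{obs:multiplicity}(3) and take the difference of the two ends of the interval'' suffices.
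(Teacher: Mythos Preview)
Your proposal is correct and is exactly the intended argument: the paper states this as a corollary of Observation~\ref{obs:multiplicity}(3) with no separate proof, and your one-line subtraction of the two ends of the concentration window is precisely how it follows.
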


We will use the following claims in the proofs of Theorem \ref{GSforRM} and Theorem \ref{thm:nEven}. The proofs of the claims are rather technicals and can be found in the next subsection. The first claim shows that the typical maximum degree of a  random multigraph is not too far from its expectation. 	Let $\varepsilon\in (0,1)$ be a constant and define $d_+=\frac {2m}n+(1+ \varepsilon)\sqrt {\frac {4m}n(1-\frac 2n)\log n}$. 

\begin{claim}\label{lem0:MaxDeg2}
Let $n$ be a sufficiently large integer, and let $m=\omega (n\log^3n)$. Let $M\sim \mnm$.  Then \whp  $\Delta(M)\leq d_+$.  
\end{claim}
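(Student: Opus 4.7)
The plan is a direct union bound combined with a sharp tail estimate for the binomial distribution. Since $M \sim M(n,m)$ is formed by $m$ independent uniform draws from $\binom{[n]}{2}$, each draw contains any fixed vertex $v$ independently with probability $(n-1)/\binom{n}{2} = 2/n$, so $d(v) \sim \Bin(m, 2/n)$ for every $v \in [n]$. Thus $\mathbb E[d(v)] = 2m/n$ and $\text{Var}(d(v)) = \frac{2m}{n}(1-\frac{2}{n})$, and the target threshold rewrites as
$$d_+ = \mathbb E[d(v)] + x\sqrt{\text{Var}(d(v))}, \qquad \text{where } x = (1+\varepsilon)\sqrt{2\log n}.$$

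The natural tool is then Lemma~\ref{lem:DeMoLap} (De Moivre--Laplace), whose hypotheses require $mp(1-p) \to \infty$ and $x = o\bigl((mp(1-p))^{1/6}\bigr)$ at $p = 2/n$. The assumption $m = \omega(n \log^3 n)$ yields $mp = \omega(\log^3 n)$, so $(mp(1-p))^{1/6} = \omega(\sqrt{\log n})$, which comfortably dominates $x = \Theta(\sqrt{\log n})$; both preconditions hold. Lemma~\ref{lem:DeMoLap} then delivers
$$\Pr[d(v) \geq d_+] = \exp\!\left(-\tfrac{x^2}{2} - \log x - \tfrac12 \log(2\pi) + o(1)\right) = n^{-(1+\varepsilon)^2 + o(1)}.$$

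Since $(1+\varepsilon)^2 > 1$ by a definite positive margin, this probability is $o(1/n)$, and a union bound over the $n$ vertices gives $\Pr[\Delta(M) > d_+] = o(1)$, as required. I do not see a genuine obstacle here: the delicate point is that the tail in question sits at $\sqrt{\log n}$ standard deviations, exactly the regime where ordinary Chernoff bounds are on the edge, which is why the slightly subtler De Moivre--Laplace estimate is convenient (the factor $(1+\varepsilon)^2 > 1$ is what provides the slack needed to beat the union bound). The hypothesis $m = \omega(n \log^3 n)$ is engineered precisely to satisfy the sixth-root condition in Lemma~\ref{lem:DeMoLap}, so the proof should go through essentially by bookkeeping.
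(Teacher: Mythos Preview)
Your proof is correct and follows essentially the same approach as the paper. The paper packages the De Moivre--Laplace computation into Remark~\ref{re:Y_s} and then applies Markov's inequality to the expected number $\mathbb E(Y_+)$ of vertices of degree at least $d_+$, while you apply the union bound directly; these are equivalent here, and both rest on the same tail estimate $\Pr[d(v)\geq d_+]=\Theta\bigl(n^{-(1+\varepsilon)^2}/\sqrt{\log n}\bigr)$ obtained from Lemma~\ref{lem:DeMoLap}.
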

%

In the next lemma we show that the typical gap between the two largest degrees in a random multigraph is quite large. This phenomena is already known in the context of random graph (see, e.g., \cite{Bol98}), and here we prove an analogous statement for random multigraphs.

\begin{lemma}\label{lem0:DegDiff}
	Let $n$ be sufficiently large integer and let $m=\omega (n\log^5n)$. Let $M\sim \mnm$, and let $d_1 \geq d_2 \geq \dots \geq d_n$ be the (ordered) degree sequence of $M$.  Then \whp $d_1-d_2\geq\frac {1}{f(n)}\sqrt{\frac {2m}{n \log n}}$, where $f(n)=o(\log n)$ is a function tending to infinity arbitrarily slowly with $n$. 
\end{lemma}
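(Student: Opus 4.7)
The approach is to exploit the fact that each degree $d(v)$ in $M \sim M(n,m)$ is distributed as $X \sim \Bin(m, 2/n)$, and that the degree vector behaves like an $n$-sample of (negatively correlated, nearly i.i.d.)\ such binomials. Set $\mu = 2m/n$ and $\sigma = \sqrt{mp(1-p)} = (1+o(1))\sqrt{2m/n}$, so the target gap is $g = \sigma/(f(n)\sqrt{\log n})$ up to lower-order factors. Since $m = \omega(n\log^5 n)$, the DeMoivre--Laplace asymptotic of Lemma~\ref{lem:DeMoLap} is available throughout the range $x := (t-\mu)/\sigma \leq (1+\varepsilon)\sqrt{2\log n}$, and in that range the geometric decay rate of the upper tail is $\alpha = \Theta(\sqrt{\log n}/\sigma)$ by Claim~\ref{claim:alphaBin}. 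The identity driving the entire argument is $g\alpha = 1/f(n) \to 0$.

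The plan is first to localize $d_1$. Pick $T_-$ so that $n\Pr[X \geq T_-] = \log n$; by DeMoivre--Laplace this forces $T_- = \mu + (1+o(1))\sigma\sqrt{2\log n}$. Let $N = |\{v : d(v) \geq T_-\}|$, so $\mathbb{E}[N] = \log n$. Since the indicators $\mathbf{1}[d(v) \geq T_-]$ are negatively correlated functions of the $m$ i.i.d.\ edges of $M$, $\mathrm{Var}(N) \leq \mathbb{E}[N]$, and Chebyshev's inequality gives $\Pr[N = 0] = O(1/\log n) = o(1)$, so $d_1 \geq T_-$ \whp. Combined with Claim~\ref{lem0:MaxDeg2}, which yields $d_1 \leq d_+ := \mu + (1+\varepsilon)\sigma\sqrt{2\log n}$ \whp, this confines $d_1$ to a standardized window around $\sqrt{2\log n}$.

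The heart of the argument is then a pair/window union bound. If $d_1 - d_2 < g$ and $d_1 \in [T_-, d_+]$, then there exist distinct $u, v$ and $t \in [T_-, d_+]$ with $d(u) = t$, $d(v) \in [t - g, t-1]$, and $d(w) \leq t$ for every $w \neq u, v$. Summing over $t$ and over ordered pairs $(u,v)$, and invoking the negative association of the degree vector of $M(n,m)$ to replace the joint probability by a product of marginals, yields
\[
\Pr[d_1 - d_2 < g] \leq o(1) + n^2 \sum_{t = T_-}^{d_+} \Pr[X = t]\, \Pr[X \in [t-g, t-1]]\, \Pr[X \leq t]^{n-2}.
\]
Now estimate each factor: by Claim~\ref{claim:alphaBin}, $\Pr[X = t] = O(\alpha)\Pr[X \geq t]$; by Observation~\ref{obs:BinFrac} together with $g\alpha = o(1)$, $\Pr[X \in [t-g, t-1]] = O(g\alpha)\Pr[X \geq t]$; and $\Pr[X \leq t]^{n-2} \leq e^{-(1-o(1))r}$ where $r := n\Pr[X \geq t]$. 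Each summand is therefore $O(g\alpha^2 r^2 e^{-r})$. Converting the sum over $t$ to an integral via $|dr/dt| = \Theta(\alpha r)$ gives
\[
\Pr[d_1 - d_2 < g] \leq o(1) + O\Big(g\alpha \int_0^{\infty} r e^{-r}\,dr\Big) = O(1/f(n)) = o(1),
\]
as desired.

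The main obstacle is the product-of-marginals inequality in the main display, i.e., the negative association of the degree vector of $M(n,m)$. The degrees form the coordinates of a multinomial-type vector driven by the $m$ i.i.d.\ edge choices, which is indeed negatively associated, but one has to apply this carefully to the joint event $\{d(u) = t,\; d(v) \in [t-g, t-1],\; \max_{w \neq u,v} d(w) \leq t\}$, which mixes an equality with the crucial ``cutoff'' factor $\Pr[X \leq t]^{n-2}$ over $n-2$ vertices (without this cutoff the sum diverges). The other, more routine, technical issue is uniform control of the DeMoivre--Laplace estimate and the value of $\alpha$ across the relevant window; this is where the strength $m = \omega(n\log^5 n)$ is used.
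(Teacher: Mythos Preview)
Your approach is genuinely different from the paper's, and the key difference is how you localize $d_1$ and $d_2$. You put $T_-$ at the level where $n\Pr[X\geq T_-]=\log n$, and then need the cutoff factor $\Pr[X\leq t]^{n-2}$ to make the pair sum converge. The paper instead takes $d_-$ at the much higher level $n\Pr[X\geq d_-]\approx f^{1/3}(n)$, uses a second-moment argument (Claim~\ref{claim:d_-}) to show that \whp at least two vertices have degree $\geq d_-$ (so \emph{both} $d_1,d_2$ lie in $[d_-,d_+]$), and then runs a plain union bound over pairs $u,v$ with $d(u),d(v)\in[d_-,d_+]$ and $|d(u)-d(v)|\leq t$, with no cutoff factor at all. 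The point is that $n^2\Pr[X\geq d_-]^2\approx f^{2/3}(n)$, and this residual is killed by $t\alpha=O(1/f(n))$, giving $O(f^{-1/3}(n))=o(1)$. Dependence between $d(u)$ and $d(v)$ is handled not via negative association but by explicit conditioning (Remark~\ref{re:condi} and Claim~\ref{claim:sBin}): given $d(u)$, the distribution of $d(v)$ in $M-u$ is dominated by a $\Bin(m-d(u),2/(n-1))$ variable, shifted by at most $\mu(M)$.

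The gap in your argument is exactly the one you flag as the ``main obstacle'', and it is genuine: negative association does \emph{not} yield the product upper bound you need. NA gives $\text{Cov}(f,g)\leq 0$ for coordinatewise increasing $f,g$ on disjoint index sets, but your joint event mixes the decreasing event $\{\max_{w\neq u,v}d(w)\leq t\}$ with the non-monotone events $\{d(u)=t\}$ and $\{d(v)\in[t-g,t-1]\}$. For a decreasing event and an increasing event on disjoint coordinates, NA gives \emph{positive} correlation, i.e.\ the wrong direction. A two-variable example already breaks the inequality: if $X_1+X_2=1$ with $X_1\sim\text{Bernoulli}(1/2)$ (an NA pair), then $\Pr[X_1=1,X_2=0]=\tfrac12>\tfrac14=\Pr[X_1=1]\Pr[X_2=0]$, which is precisely the ``top value at $u$, next value at $v$'' configuration. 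So the displayed factorization is not a consequence of NA, and without the cutoff factor your sum with threshold $T_-$ diverges. You could try to salvage the cutoff by conditioning on $d(u)=t$ and then controlling the remaining $n-1$ degrees explicitly, but that is essentially the content of Remark~\ref{re:condi}; at that point it is cleaner to adopt the paper's threshold $d_-$ and drop the cutoff entirely.
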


\vspace{3mm}
\subsubsection{Proofs of Claim \ref{lem0:MaxDeg2} and Lemma \ref{lem0:DegDiff}}

We will use the following notation. Let $X_d$ be the number of vertices of degree $d$ in $M\sim \mnm$, and let $Y_d=\sum_{\ell\geq d}X_\ell$. Note that $\mathbb E(Y_d)=\sum_{v\in [n]}\Pr[d(v)\geq d]$.

In both proofs we will use the following approximation.
\begin{remark}\label{re:Y_s}
	 	Let $\alpha>-1$ be a constant and $Y_s=\sum_{d\geq  d_s}X_d$ where $d_s=\left\lceil mp+(1+\alpha)\sqrt {2mp(1-p)\log n}\right\rceil$. Note that here $p=\frac 2n$ and $x:=\left(1+\alpha+o\left(\frac 1{\sqrt {mp}}\right)\right)\sqrt{2\log n}=o(\sqrt[6] {m/n})$. Then
	By Lemma~\ref{lem:DeMoLap},
	\begin{align*}
	\mathbb E(Y_s) 
	&= n(1+o(1))\frac 1{x\sqrt {2\pi}}{\rm{e}}^{-x^2/2}
	= (1+o(1))\frac n{(1+\alpha)\sqrt {4\pi\log n}}{\rm{e}}^{-(1+\alpha)^2\log n}
	=\Theta\left( {\frac {n^{-2\alpha-\alpha^2}}{\sqrt {\log n} }}\right).
	\end{align*}
\end{remark}

We first prove Claim \ref{lem0:MaxDeg2}.
\begin{proof}[Proof of Claim \ref{lem0:MaxDeg2}]
		Let $Y_+=\sum_{d\geq d_+}X_d$. By Remark~\ref{re:Y_s} we have 
	\begin{align*}
	\mathbb E(Y_+) 
	&= \Theta\left( {\frac {n^{-2\varepsilon -\varepsilon^2}}{\sqrt {\log n} }}\right)=o\left(\frac 1{n^{\varepsilon}}\right).
	\end{align*}
	By Markov's inequality the claim follows.
\end{proof}

\medskip

The proof of Lemma \ref{lem0:DegDiff} is  more complicated, and we need the following statements.

Let $X\sim \Bin(m,\frac 2n)$ and let $\varepsilon\in (0,1)$. Let $d_-$ be the maximal integer such that $\Pr[X\geq d_-]\geq \frac{f^{1/3}(n)}n$ where $f(n)$ is as in Lemma~\ref{lem0:DegDiff}. Let $Y_-=\sum_{d\geq d_-}X_d$.

\begin{remark}\label{re:d-}
The following hold.
	\begin{enumerate}[$(1)$]
		\item $\mathbb E (Y_-)\geq f^{1/3}(n)$.
		\item If $d_\varepsilon=\left\lceil \frac {2m}n+(1-\varepsilon)\sqrt{\frac {4m}n (1-\frac 2n)\log n}\right\rceil$ and $Y_\varepsilon= \sum_{d\geq d_\varepsilon}X_d$, then by Remark~\ref{re:Y_s}, we have that $\mathbb E(Y_\varepsilon)\geq n^{\varepsilon}$, and therefore $d_->\frac {2m}n+(1-\varepsilon)\sqrt{\frac {4m}n (1-\frac 2n)\log n}$.
		\item If $d_{up}=\left\lceil\frac {2m}n+(1+\frac \varepsilon2)\sqrt{\frac {4m}n (1-\frac 2n)\log n}\right\rceil$ and $Y_{up}=\sum_{d\geq d_{up}}X_d$, then by Remark~\ref{re:Y_s}, we have that $\mathbb E(Y_{up})=o(1)$. Combining it with  item $(1)$ and the fact that the tail of the binomial random variable decays slowly (see Claim~\ref{claim:alphaBin}) we have $d_-<\frac {2m}n+(1+\frac\varepsilon2)\sqrt{\frac {4m}n (1-\frac 2n)\log n}$.
		\item From items $(2)$,$(3)$, we have that $d_+-d_- \in [\frac \varepsilon2\sqrt{\frac {4m\log n}n}, 2\varepsilon\sqrt{\frac {4m\log n}n}]$. 
	\end{enumerate}
\end{remark}

\begin{claim}\label{claim:sBin}
	Let $ m =\omega (n\log^5 n)$. Let $X\sim \Bin \left(m,\frac {2}{n}\right)$ and $Y\sim \Bin \left(m-d_-,\frac {2}{n-1}\right)$.
	Assume that $\mu\leq \max\left\{30\log^2n, \frac m{\binom n2}+ 4\sqrt {\frac m{\binom n2}\log n}\right\}$ is a positive integer. Then 
	$$\Pr\left[Y\geq d_--\mu\right]\leq (1+o(1))\Pr\left[X\geq d_-\right].$$ 
\end{claim}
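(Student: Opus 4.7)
The plan is to apply the DeMoivre--Laplace estimate (Lemma~\ref{lem:DeMoLap}) to both $\Pr[X \geq d_-]$ and $\Pr[Y \geq d_- - \mu]$ and show that the corresponding standardized deviations $x_X := (d_- - \mathbb{E}(X))/\sigma_X$ and $x_Y := (d_- - \mu - \mathbb{E}(Y))/\sigma_Y$ (where $\sigma_X, \sigma_Y$ denote the standard deviations of $X, Y$) satisfy $|x_X - x_Y| = o(1/\sqrt{\log n})$ whenever $x_Y < x_X$. Since $x_X = \Theta(\sqrt{\log n})$ by Remark~\ref{re:d-}, this will force $x_Y^2 \geq x_X^2 - o(1)$ and $|\log x_Y - \log x_X| = o(1)$, and Lemma~\ref{lem:DeMoLap} then gives $\Pr[Y \geq d_- - \mu] = (1+o(1))\Pr[X \geq d_-]$, which suffices.

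A direct computation, using $d_- = 2m/n + a$ with $a = \Theta(\sqrt{(m/n)\log n})$ from Remark~\ref{re:d-}, yields
$$\mathbb{E}(X) - \mathbb{E}(Y) = \frac{2m}{n(n-1)} + \frac{2a}{n-1}, \qquad \sigma_Y^2/\sigma_X^2 = 1 - 1/n + O(1/n^2),$$
whence $x_Y \sigma_Y - x_X \sigma_X = \nu - \mu + 2a/(n-1)$, where $\nu := 2m/(n(n-1)) = m/\binom{n}{2}$ is the expected edge multiplicity. If $\mu \leq \nu$ then this quantity is $\geq 2a/(n-1) > 0$, so $x_Y \sigma_Y \geq x_X \sigma_X$ and hence $x_Y \geq x_X(\sigma_X/\sigma_Y) = x_X(1 + O(1/n))$, giving $x_Y - x_X \geq -O(\sqrt{\log n}/n) = -o(1/\sqrt{\log n})$.

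The main obstacle is the case $\mu > \nu$. Here the hypothesis $\mu \leq \max\{30\log^2 n,\, \nu + 4\sqrt{\nu \log n}\}$ gives $\mu - \nu \leq \max\{30\log^2 n,\, 4\sqrt{\nu \log n}\}$. Dividing by $\sigma_X = \Theta(\sqrt{\nu n})$, the term $4\sqrt{\nu \log n}/\sigma_X = O(\sqrt{\log n/n}) = o(1/\sqrt{\log n})$ since $\log^2 n = o(n)$, and the term $30\log^2 n/\sigma_X = O(\log^2 n/\sqrt{m/n}) = o(1/\sqrt{\log n})$ by the hypothesis $m = \omega(n\log^5 n)$ inherited from Lemma~\ref{lem0:DegDiff} (which ensures $\sqrt{m/n} = \omega(\log^{5/2} n)$). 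Together with the contributions $2a/(n-1)/\sigma_X = O(\sqrt{\log n}/n)$ and the variance correction $x_X(\sigma_X/\sigma_Y - 1) = O(\sqrt{\log n}/n)$, this gives $|x_Y - x_X| = o(1/\sqrt{\log n})$, as required. The key observation making the argument work uniformly is that the quantity $m/\binom{n}{2}$ appearing in the hypothesis on $\mu$ is exactly the first-order term in $\mathbb{E}(X) - \mathbb{E}(Y)$, so the dangerous large-$m$ contribution cancels precisely.
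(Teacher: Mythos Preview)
Your approach is essentially the same as the paper's: both rest on DeMoivre--Laplace and on the cancellation you correctly isolate, namely that the first-order shift $\mathbb E(X)-\mathbb E(Y)=m/\binom n2+2a/(n-1)$ matches the $m/\binom n2$ term in the hypothesis on $\mu$. The paper packages the DeMoivre--Laplace step through Lemma~\ref{lem:BinChange1} (applied at the auxiliary threshold $d'=\lceil d_-(1-d_-/m)(1+1/(n-1))\rceil$) and then bridges from $d'$ to $d_--\mu$ via the pointwise ratio estimate of Claim~\ref{claim:alphaBin}; you instead compare $x_X$ and $x_Y$ directly and bound each contribution to their difference. Your calculations of those contributions are correct, and in particular the use of $m=\omega(n\log^5 n)$ to kill the $30\log^2 n/\sigma_X$ term is exactly right.

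There is one gap, though it is easily repaired. You plan to apply Lemma~\ref{lem:DeMoLap} to $Y$ at $d_--\mu$, which requires $x_Y=o((m/n)^{1/6})$. But the hypothesis allows $\mu$ to be any positive integer up to the stated maximum, so in particular $\mu=1$ is admissible; if moreover $m\ge n^4$ then $x_Y\approx \nu/\sigma_Y=\Theta(\sqrt m/n^{3/2})$, which is \emph{not} $o((m/n)^{1/6})$, and Lemma~\ref{lem:DeMoLap} fails (and your asserted asymptotic equality $\Pr[Y\ge d_--\mu]=(1+o(1))\Pr[X\ge d_-]$ is in fact false there: the left side is exponentially smaller). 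The fix is one sentence: since $\Pr[Y\ge d_--\mu]$ is monotone increasing in $\mu$, it suffices to prove the inequality for the largest permitted $\mu$, and then $|\mu-\nu|\le\max\{30\log^2 n,4\sqrt{\nu\log n}\}$ so your Case-2 bound $|x_X-x_Y|=o(1/\sqrt{\log n})$ applies and Lemma~\ref{lem:DeMoLap} is legitimate for $Y$. The paper's route sidesteps this by comparing $d_--\mu$ to $d'$ and invoking plain monotonicity of $\Pr[Y\ge\cdot]$ whenever $d_--\mu\ge d'$.
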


\begin{proof}
	Let $\alpha=\frac {d_-}m$, $\beta=\frac 1{n-1}$. Write $d_-=\frac {2m}n+x\sqrt{\frac {2m}n (1-\frac 2n)}$, then by Remark~\ref{re:d-}, $x=\Theta (\sqrt{\log n})$. Let $d'=\lceil d_-(1-\alpha)(1+\beta)\rceil$ By Lemma \ref{lem:BinChange1} we have that 
	$$\Pr\left[Y\geq d'\right]= (1+o(1))\Pr\left[X\geq d_-\right].$$
	Now, if $d_--\mu\geq d'$ then we are done. If not, then recall that by Remark~\ref{re:d-}, there exist  constants $c_1,c_2>0$ such that $\frac {2m}n + c_1 \sqrt{\frac {2m\log n}n}\leq d_-\leq\frac {2m}n + c_2 \sqrt{\frac {2m\log n}n}$.
	Now, let $\ell>0$ be such that $d_--\mu+\ell=d'$, and note that $\ell$ is an integer and that $(1-\alpha)(1+\beta)=1-\frac {d_-n}{m(n-1)}+\frac 1{n-1}$. By the condition on $\mu$ we have that $\mu\leq 30\log^2n+ \frac m{\binom n2}+ 4\sqrt {\frac m{\binom n2}\log n}$. Then
	\begin{align*}
	\ell &
	\leq \mu+d_-(1-\alpha)(1+\beta)+1-d_- = \mu+d_-\left(-\frac {d_-n}{m(n-1)}+\frac 1{n-1}\right)+1
	=\mu-\frac {d_-^2n}{m(n-1)}+\frac{d_-}{n-1} +1\\
	&\leq  \mu - \frac n{m(n-1)}\left(\frac {2m}n + c_1 \sqrt{\frac {2m\log n}n}\right)^2+\frac 1{n-1}\left(\frac {2m}n + c_2 \sqrt{\frac {2m\log n}n}\right)+1\\
	& = \mu  -\frac {2m}{n(n-1)} -  {\frac {2c^2_1\log n}{n-1}} + (c_2-4c_1) \sqrt{\frac {2m\log n}{n(n-1)^2}}+1\\
	& \leq 30\log^2n+ \frac {2m}{n(n-1)}+4\sqrt {\frac {2m\log n}{n(n-1)}}  -\frac {2m}{n(n-1)} -  {\frac {2c^2_1\log n}{n-1}} + (c_2-4c_1) \sqrt{\frac {2m\log n}{n(n-1)^2}}+1\\
	& = O\left( \sqrt {\frac {m\log n}{n(n-1)}} +\log^2n \right) .
	\end{align*}
	
	Write $m'=m-d_-$, $p'=\frac 2{n-1}$.
	Set $h=d_--\mu-m'p'=d'-\ell-m'p'$, then $h>0$ and in fact $h=\Theta\left(\sqrt {\frac {m\log n}{n}}\right)$.  
%
	 Therefore by Claim \ref{claim:alphaBin} we have that $\Pr[Y=d']>(1-\alpha)^\ell\Pr[Y=d'-\ell]$ for $\alpha=O(\frac {\sqrt{m\log n}}{\sqrt n m'p'})$. Since $\alpha\ell=o(1)$, we have $(1-\alpha)^\ell=(1-o(1))$. Thus,
	\begin{align*}
	\Pr\left(Y\geq d'\right) 
	&=\sum_{k= d'}^{\infty}\Pr\left(Y= k\right)
	\geq \sum_{k= d'}^{\infty}(1-\alpha)^{\ell}\Pr\left(Y= k-\ell\right)
	\geq (1-o(1))\sum_{k= d'}^{\infty}\Pr\left(Y= k-\ell\right)\\
	&=(1-o(1)) \Pr\left(Y\geq d'-\ell\right)
	=(1-o(1)) \Pr\left(Y\geq d_--\mu\right).
	\end{align*}
	 
	 All in all, we have that 
	 $\Pr\left(Y\geq d_--\mu\right)\leq (1+o(1))\Pr\left(X\geq d_-\right).$
\end{proof}

\begin{remark}\label{re:condi}
	Let $k_1,k_2$ be integers. Let $M\sim M(n,m)$  and assume that $u,v\in V(M)$ are two vertices. Recall that $d(u)\sim \Bin(m,\frac 2n)$ and that $\mu(uv)\leq \mu(M)$. Also, conditioning on the event $d(v)\geq k_2$, the probability that a new edge is in $E[u,V\setminus \{u,v\}]$ is $\frac 2{n-1}$ (that is, the probability for an edge to contain $u$ in the sub(multi)graph $M-v$). Let $d'(u)\sim \Bin(m-k_2,\frac 2{n-1})$.  Then by the monotonicity of the binomial distribution (see Observation~\ref{obs:BinDec}) we have that  
	$$\Pr\left[d(u)\geq k_1 \mid d(v)\geq k_2 \right]\leq \Pr[d'(u)\geq k_1-\mu(M)].$$
	Similarly,
	$\Pr\left[d(u)= k_1 \mid d(v)= k_2 \right]\leq \Pr[d'(u)= k_1-\mu(M)].$
\end{remark} 

\begin{claim}\label{claim:d_-}
	Let $m=\omega (n\log^5 n)$, and
	let $Y_-=\sum_{d\geq d_-}X_d$. Then \whp $Y_-\geq 2$.
\end{claim}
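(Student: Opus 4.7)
The plan is to apply the second-moment method to $Y_-$. By Remark~\ref{re:d-}(1) we have $\mathbb{E}(Y_-)\geq f^{1/3}(n)\to\infty$, so it suffices to show $\mathrm{Var}(Y_-)=o\bigl(\mathbb{E}(Y_-)^2\bigr)$; Chebyshev's inequality will then yield $\Pr[Y_-\leq 1]=o(1)$, which is the desired conclusion.

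First I would expand
\[
\mathbb{E}(Y_-^2)=\mathbb{E}(Y_-)+\sum_{u\neq v}\Pr\bigl[d(u)\geq d_-,\,d(v)\geq d_-\bigr]
\]
and concentrate on the off-diagonal terms. Let $\mathcal{E}$ be the event that $\mu(M)\leq \mu_0$, where $\mu_0$ is the typical upper bound supplied by Observation~\ref{obs:multiplicity}; note that $\Pr[\mathcal{E}^c]=o(n^{-4})$, and that $\mu_0$ satisfies the hypothesis of Claim~\ref{claim:sBin} in each of the regimes $m<n^2\log^2n$ and $m\geq n^2\log^2n$. For fixed distinct $u,v\in[n]$, conditioning on $d(v)=k$ makes the edges of $M$ not incident to $v$ iid uniform over pairs avoiding $v$, contributing to $d(u)$ an independent $\mathrm{Bin}(m-k,\,2/(n-1))$ summand; the remaining contribution to $d(u)$ is exactly $\mu(uv)\leq \mu(M)$. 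On $\mathcal{E}$, combined with the stochastic monotonicity already used in Remark~\ref{re:condi}, this gives
\[
\Pr\bigl[d(u)\geq d_-,\,d(v)\geq d_-,\,\mathcal{E}\bigr]\leq \Pr[d(v)\geq d_-]\cdot \Pr\Bigl[\mathrm{Bin}\bigl(m-d_-,\tfrac{2}{n-1}\bigr)\geq d_--\mu_0\Bigr].
\]

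Now I would invoke Claim~\ref{claim:sBin}, tailored precisely to absorb the shift by $\mu_0$: it yields $\Pr[\mathrm{Bin}(m-d_-,2/(n-1))\geq d_--\mu_0]\leq (1+o(1))\Pr[X\geq d_-]$. Hence each off-diagonal term is at most $(1+o(1))\bigl(\mathbb{E}(Y_-)/n\bigr)^2$, while the total contribution of $\mathcal{E}^c$ across the $\binom{n}{2}$ pairs is at most $n^2\Pr[\mathcal{E}^c]=o(n^{-2})$, which is negligible next to $\mathbb{E}(Y_-)^2$. Summing gives
\[
\mathbb{E}(Y_-^2)\leq (1+o(1))\mathbb{E}(Y_-)^2+\mathbb{E}(Y_-)+o(1),
\]
so $\mathrm{Var}(Y_-)=o\bigl(\mathbb{E}(Y_-)^2\bigr)$, and Chebyshev's inequality delivers $\Pr[Y_-\leq 1]\to 0$.

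The only real obstacle is the positive correlation between the indicators $\{d(u)\geq d_-\}$ and $\{d(v)\geq d_-\}$ coming from the shared multiplicity $\mu(uv)$: an edge $uv$ is simultaneously counted in both $d(u)$ and $d(v)$. Conditioning on $\mathcal{E}$ bounds this shared contribution by $\mu_0$, and Claim~\ref{claim:sBin} is exactly what quantifies the negligibility of the resulting shift in the binomial threshold. The hypothesis $m=\omega(n\log^5 n)$ is used through Claim~\ref{claim:sBin}, ensuring $\mu_0$ is sufficiently small compared with the natural fluctuation scale $\sqrt{m/n}$.
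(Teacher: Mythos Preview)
Your proposal is correct and is essentially the same argument as the paper's: the paper also applies the second-moment method, bounding $\mathbb{E}(Y_-(Y_--1))$ by invoking Observation~\ref{obs:multiplicity} for the typical bound on $\mu(M)$, Remark~\ref{re:condi} for the conditional degree distribution, and Claim~\ref{claim:sBin} to absorb the shift by $\mu$, then finishing with Chebyshev and $\mathbb{E}(Y_-)\geq f^{1/3}(n)$.
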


\begin{proof}
	Recall that $Y_-=\sum_{v\in [n]}\textbf{1}_{d(v)\geq d_-}$, therefore  $\mathbb E(Y_-)=\sum_{v\in [n]}\Pr(d(v)\geq d_-)$ and $\mathbb E(Y_-^2)=\sum_{v,u\in [n]}\Pr(d(v)\geq d_-,\ d(u)\geq d_-)$. Moreover, by Observation  \ref{obs:multiplicity} we have that\\ $\mu(M)\leq \max \{30\log^2n, \frac m{\binom n2}+ 4\sqrt {\frac m{\binom n2}\log n}\}$ with probability $1-o(\frac 1{n^4})$. 
	Let $X\sim \Bin \left(m,\frac {2}{n}\right)$ and $Y\sim \Bin \left(m-d_-,\frac {2}{n-1}\right)$.	Thus, by Claim \ref{claim:sBin} and Remark~\ref{re:condi}

	\begin{align*}
	\mathbb E(Y_-(Y_--1))&=\mathbb E(Y_-^2)-\mathbb E(Y_-)=\sum_{v\neq u\in [n]}\Pr[d(v)\geq d_-,\ d(u)\geq d_-]\\
	&=n(n-1)\Pr[d(v_1)\geq d_-,\ d(v_2)\geq d_-]\\
	&=n(n-1)\Pr[d(v_1)\geq d_-]\Pr[d(v_2)\geq d_- \mid d(v_1)\geq d_-]\\
	&\leq n(n-1)\Pr\left[X\geq d_-\right]\left(\Pr\left[Y\geq d_--\mu(M)\right]+o\left(\frac 1{n^4}\right)\right)\\
	&\leq (1+o(1))n^2\Pr\left[X\geq d_-\right]\left(\Pr\left[X\geq d_- \right]+o\left(\frac 1{n^4}\right) \right)\\
	&=(1+o(1))(\mathbb E(Y_-))^2.
	\end{align*}
	
	Therefore, $\Pr[Y_{-}<\frac 12 \mathbb E(Y_{-})]<\frac {\mathbb E(Y_{-}(Y_{-}-1))+\mathbb E(Y_{-})-(\mathbb E(Y_{-}))^2}{(\mathbb E(Y_{-})^2)/4}\leq o(1)+\frac {4}{\mathbb E(Y_{-})}=o(1)+O(\frac 1{f^{1/3}(n)})=o(1)$. Thus \whp $Y_-\geq 2$.
	%
	%
\end{proof}

\medskip

We are now ready to prove Lemma \ref{lem0:DegDiff}.
\begin{proof}[Proof of Lemma \ref{lem0:DegDiff}]
%
%
	We will actually prove something stronger.  We will prove that \whp for every $u,v\in \binom {[n]}2$ such that $d(v),d(u)\in [d_-,d_+]$ we have $|d(v)-d(u)|\geq \frac {1}{f(n)}\sqrt{\frac {2m}{n \log n}}$. Since by Claim~\ref{lem0:MaxDeg2} and Claim~\ref{claim:d_-}  we have that \whp $d_1,d_2\in [d_-,d_+]$, the statement will follow.
	
	Let $X\sim \Bin\left(m,\frac 2{n}\right)$ and $Y\sim \Bin\left(m-d_-,\frac 2{n-1}\right)$.
	By Claim \ref{claim:sBin} we have $\Pr\left[Y\geq d_--\mu\right]\leq(1+o(1))\Pr\left[X\geq d_-\right]$ for $\mu\leq \max \left\{30\log^2n,\frac m{\binom n2}+ 4\sqrt {\frac m{\binom n2}\log n}\right\}$.  
	Let $\ell_1,\ell_2\in [d_-,d_+]$, $m'=m-\ell_1$, and $p'=\frac 2{n-1}$. Let $h=\ell_2-\mu(M)-m'p'=\Theta\left(\sqrt{\frac {m\log n}n}\right)$ and $Y_{\ell_1}\sim \Bin\left(m-\ell_1,\frac {2}{n-1}\right)$. 	
	Thus, by Claim~\ref{claim:alphaBin} we have $\Pr\left[Y_{\ell_1}= \ell_2-\mu(M)\right]\leq \alpha\Pr\left[Y_{\ell_1}\geq \ell_2-\mu(M)\right]$, for  $\alpha=\Theta (\frac {nh}{m})=O \left(\sqrt{\frac {n\log n}m}\right)$.
	Also, note that $\Pr\left[ X\geq d_-\right]\leq (2+o(1))\Pr\left[ X\geq d_-+1\right]$. Indeed, by Claim~\ref{claim:alphaBin}
	\begin{align*}
	\Pr\left[ X\geq d_-\right]&=\Pr\left[ X= d_-\right]+\Pr\left[ X\geq d_-+1\right]\\
	&\leq \frac 1{1-\alpha}\Pr\left[ X= d_-+1\right]+\Pr\left[ X\geq d_-+1\right]\\
	&\leq \left(1+\frac 1{1-\alpha}\right)\Pr\left[ X\geq d_-+1\right]=(2+o(1))\Pr\left[ X\geq d_-+1\right].
	\end{align*} 
	Since by Observation \ref{obs:multiplicity} we have that $\mu(M)\leq \max \left\{30\log^2n,\frac m{\binom n2}+ 4\sqrt {\frac m{\binom n2}\log n}\right\}$ with probability $1-o(\frac 1{n^4})$, and by Lemma \ref{lem0:MaxDeg2} and Claim~\ref{claim:d_-}  we have that \whp $d_1,d_2\in [d_-,d_+]$, 
	using Remark~\ref{re:condi} and Observation~\ref{obs:multiplicity} we conclude that
	
	\begin{align*}
	&\Pr[\exists u,v\in [n],\ d(u),d(v)\in [d_-,d_+],\ |d(u)-d(v)|\leq t]
	\leq n^2\Pr[d(u),d(v)\in [d_-,d_+],\  |d(u)-d(v)|\leq t]\\
	&\leq (1+o(1))n^2\sum_{\ell_1=d_-}^{d_+}\sum_{\substack{\ell_2=d_-\\  |\ell_1-\ell_2|\leq t}}^{d_+}\Pr[d(u)=\ell_1,\ d(v)=\ell_2]\\
	&\leq (1+o(1)) n^2\sum_{\ell_1=d_-}^{d_+}\sum_{\substack{\ell_2=d_-\\  |\ell_1-\ell_2|\leq t}}^{d_+}\Pr[d(u)=\ell_1]\Pr[ d(v)=\ell_2 \mid d(u)=\ell_1]\\
	&\leq (1+o(1)) n^2\sum_{\ell_1=d_-}^{d_+}\sum_{\substack{\ell_2=d_-\\  |\ell_1-\ell_2|\leq t}}^{d_+}\Pr[d(u)=\ell_1]\Pr\left[ Y_{\ell_1}=\ell_2-\mu(M)\right]\\
	&\leq (1+o(1)) n^2\sum_{\ell_1=d_-}^{d_+}\sum_{\substack{\ell_2=d_-\\  |\ell_1-\ell_2|\leq t}}^{d_+}\Pr[d(u)=\ell_1]\alpha\Pr\left[Y_{\ell_1}\geq \ell_2-\mu(M)\right]\\
	&\leq (1+o(1))n^2\alpha\Pr\left[Y\geq d_--\mu(M)\right]\sum_{\ell_1=d_-}^{d_+}\sum_{\substack{\ell_2=d_-\\  |\ell_1-\ell_2|\leq t}}^{d_+}\Pr[d(u)=\ell_1]\\
	&\leq (1+o(1))\cdot n^2(2t+1)\alpha\left(\Pr\left[ X\geq d_-\right]+o\left(\frac 1{n^4}\right)\right)\sum_{\ell_1=d_-}^{d_+}\Pr[d(u)=\ell_1]\\
	&\leq (1+o(1))\cdot n^2(2t+1)\alpha\left(\Pr\left[ X\geq d_-\right]+o\left(\frac 1{n^4}\right)\right)\Pr\left[ X\geq d_-\right]\\
	&\leq (1+o(1))\cdot n^2(2t+1)\alpha \Pr\left[ X\geq d_-\right]^2
	\leq (2+o(1))\cdot n^2(2t+1)\alpha\Pr\left[ X\geq d_-+1\right]^2\\
	&=O \left(n^2t\alpha \left(\frac {f^{1/3}(n)} {n}\right)^2\right)
	= O (t\alpha f^{2/3}(n)).
	\end{align*}

	All in all, for $t=\frac {1}{f(n)}\sqrt{\frac {2m}{n \log n}}$ and $\alpha=O\left(\sqrt{\frac {n\log n}m}\right)$, we have
	\[\Pr[d_1-d_2<t]= O (t\alpha f^{2/3}(n))=o(1). \qedhere\]
\end{proof}

\vspace{5mm}
\section{The chromatic index of random graphs}
In this section we will prove Theorem \ref{thm:Gnp}. 

The proof of Theorem \ref{thm:Gnp} is a simpler version of the proof of Theorem \ref{GSforRM} for small values of $m$. For a warm-up, we included the details below.

Let $G\sim G(n,p)$.
First we will handle the case where $p=\omega(\frac {\log n}n)$ and $p\leq \frac 12$. By Theorem~\ref{GnpUniMax}, \whp there exists a unique vertex of maximum degree. Then, by Theorem~\ref{vizing} we have that \whp $\chi'(G)=\Delta(G)$.

Next, let us consider the sparse case, where $p=o(\frac 1n)$. By Lemma \ref{GnpForest}, $G\sim G(n,p)$ is \whp a forest and therefore by Theorem \ref{vizing} \whp $\chi'(G)=\Delta(G)$.

Thus, we may assume from now on that $\frac 1{n\log\log n}\leq p\leq \frac {\log^2n}n$. 
Let $G\sim G(n,p)$, recall that $d(v)\sim \Bin(n-1,p)$ for $v\in [n]$. We will consider the vertices of \textit{very large} degree.  Let $X\sim \Bin(n-1,p)$, let $d_0=\max \{d \mid \Pr[X\geq d]\geq n^{-0.9}\ \}$, and define $L_0=\{v\in [n]\mid d(v)\geq d_0 \}$. Note that for $v\in [n]$, $\Pr[d(v)\geq d_0+1]<n^{-0.9}$ and $\Pr[v\in L_0]\geq n^{-0.9}$. 

If we can show that \whp the set $L_0$ is an independent set and not empty, then clearly \whp the set of vertices of maximum degree forms an independent set, and then by Theorem \ref{vizing} we are done. For proving that \whp every two vertices in $L_0$ are not neighbours we will use the following two claims. 

\begin{observation}\label{obs:Bin}
	Since $d(v)\sim \Bin(n-1,p)$ and $d_0< \log^3n$, we have by Observation~\ref{obs:BinFrac} that $\Pr[d(v)\geq d_0]\leq (1+\frac{4d_0}{(n-1)p})\Pr[d(v)\geq d_0+1]<(4\log^4n+1)n^{-0.9}$, and similarly $\Pr[d(v)\geq d_0-1]<(1+\frac{4d_0}{(n-1)p})\Pr[d(v)\geq d_0]
	\leq n^{-0.9}\log^{8}n$.
\end{observation}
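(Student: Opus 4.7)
My plan is to read this observation as a direct consequence of Observation~\ref{obs:BinFrac}, with the only real preliminary being to confirm the stated bound $d_0<\log^3 n$. Since $d(v)\sim\Bin(n-1,p)$ with $(n-1)p\leq \log^2 n$ in the relevant regime $p\leq \log^2 n/n$, Lemma~\ref{Che0} yields $\Pr[d(v)\geq k]\leq(\mathrm{e}(n-1)p/k)^{k}\leq(\mathrm{e}\log^2 n/k)^{k}$. Taking $k=\log^3 n$ gives $(\mathrm{e}/\log n)^{\log^3 n}$, which is far below $n^{-0.9}$; since $d_0$ is by definition the largest $d$ with $\Pr[d(v)\geq d]\geq n^{-0.9}$, we must have $d_0<\log^3 n$.

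Next I would bound the multiplicative factor $1+\tfrac{4d_0}{(n-1)p}$. Using $p\geq 1/(n\log\log n)$, we get $(n-1)p\geq (1-o(1))/\log\log n$, hence $\tfrac{4d_0}{(n-1)p}\leq 4\log^3 n\cdot\log\log n\cdot(1+o(1))\leq 4\log^4 n$ for $n$ large. In particular $1+\tfrac{4d_0}{(n-1)p}\leq 4\log^4 n+1$.

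For the first claimed inequality, note $d_0<\log^3 n\leq (n-1)/2$ for large $n$, so Observation~\ref{obs:BinFrac} applied with $m=n-1$ and $k=d_0$ gives $\Pr[d(v)\geq d_0]<(1+\tfrac{4d_0}{(n-1)p})\Pr[d(v)\geq d_0+1]$; by the maximality of $d_0$ we have $\Pr[d(v)\geq d_0+1]<n^{-0.9}$, and combining with the bound on the factor above finishes this part. For the second claimed inequality I would iterate, applying Observation~\ref{obs:BinFrac} once more with $k=d_0-1$ to get $\Pr[d(v)\geq d_0-1]<(1+\tfrac{4(d_0-1)}{(n-1)p})\Pr[d(v)\geq d_0]$, and feeding in the previous estimate. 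The resulting product is at most $(4\log^4 n+1)^2\, n^{-0.9}$, which is bounded by $\log^8 n\cdot n^{-0.9}$ after absorbing the leading constants into the polylogarithmic factor for sufficiently large $n$.

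There is really no obstacle in this argument; the only point worth verifying carefully is the Chernoff-based preliminary bound on $d_0$, after which both inequalities follow from one or two applications of Observation~\ref{obs:BinFrac}.
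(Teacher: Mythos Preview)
Your proposal is correct and matches the paper's approach exactly. The paper in fact gives no separate proof for this observation—the chain of inequalities is stated directly in the observation itself, invoking Observation~\ref{obs:BinFrac} and the hypothesis $d_0<\log^3 n$—and you have simply (and correctly) unpacked the steps, including supplying the Chernoff-based verification of $d_0<\log^3 n$ that the paper postpones to the proof of Claim~\ref{claim:PrL0Gnp}. Your remark about absorbing constants into the final polylogarithmic factor is apt: $(4\log^4 n+1)^2$ is $\Theta(\log^8 n)$, and the paper's subsequent use of the bound (in Claim~\ref{claim:L0FarGnp}) only ever treats it as $O(n^{-0.9}\log^8 n)$.
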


First, we will bound from above the probability that a vertex is in $L_0$.

\begin{claim} \label{claim:PrL0Gnp}
	Let $G\sim G(n,p)$, $\frac 1{n\log\log n}\leq p=o(1)$. Then for $v\in [n]$,  $\Pr[v\in L_0]= O(n^{-0.9}\log^4 n)$.
\end{claim}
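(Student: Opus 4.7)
The plan is to use the definition of $d_0$ together with the slow-decay property of the binomial tail stated in Observations~\ref{obs:BinFrac} and~\ref{obs:Bin}. Since $d(v) \sim \Bin(n-1,p)$ and membership in $L_0$ is exactly the event $\{d(v) \geq d_0\}$, we have
\[
  \Pr[v \in L_0] \;=\; \Pr[d(v) \geq d_0].
\]
By the very definition of $d_0$, the ``one step up'' probability satisfies $\Pr[d(v) \geq d_0+1] < n^{-0.9}$, so it suffices to show that $\Pr[d(v) \geq d_0]$ is at most a polylogarithmic factor times $\Pr[d(v) \geq d_0+1]$.

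First I would establish the upper bound $d_0 < \log^3 n$ in our regime $\tfrac{1}{n\log\log n} \leq p \leq \tfrac{\log^2 n}{n}$. Indeed, $\mathbb{E}[d(v)] = (n-1)p \leq \log^2 n$, and Lemma~\ref{Che0} yields
\[
  \Pr\bigl[d(v) \geq \log^3 n\bigr] \;\leq\; \left(\frac{\mathrm{e}(n-1)p}{\log^3 n}\right)^{\log^3 n} \;\leq\; \left(\frac{\mathrm{e}}{\log n}\right)^{\log^3 n},
\]
which is far smaller than $n^{-0.9}$. By maximality of $d_0$ this forces $d_0 < \log^3 n$.

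Next I would apply Observation~\ref{obs:BinFrac} to the binomial $d(v) \sim \Bin(n-1, p)$ at the level $k = d_0$ (which lies well below $(n-1)p/2$ when $(n-1)p$ is large, and is handled trivially otherwise since $d_0$ is at most the window width anyway). This gives
\[
  \Pr[d(v) \geq d_0] \;<\; \left(1 + \frac{4 d_0}{(n-1)p}\right)\Pr[d(v) \geq d_0+1].
\]
Using $d_0 < \log^3 n$ and $(n-1)p \geq (1-o(1))/\log\log n$, the prefactor is bounded by $4\log^3 n \cdot \log\log n + 1 \leq 4\log^4 n + 1$. Combined with $\Pr[d(v) \geq d_0+1] < n^{-0.9}$ this yields
\[
  \Pr[v \in L_0] \;<\; (4\log^4 n + 1)\, n^{-0.9} \;=\; O(n^{-0.9}\log^4 n),
\]
which is exactly the claim.

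There is no real obstacle: the whole argument is just a careful reading of the definition of $d_0$ combined with the already-recorded slow-decay estimate. The only minor subtlety is checking $d_0 < \log^3 n$ to justify that the ratio in Observation~\ref{obs:BinFrac} is at most polylogarithmic, and this is immediate from the Chernoff bound in Lemma~\ref{Che0}.
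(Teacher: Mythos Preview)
Your proposal is correct and follows essentially the same route as the paper: establish a polylogarithmic upper bound on $d_0$ via Lemma~\ref{Che0}, then invoke the slow-decay estimate of Observation~\ref{obs:BinFrac} (which is exactly the content of Observation~\ref{obs:Bin}) to pass from $\Pr[d(v)\geq d_0+1]<n^{-0.9}$ to $\Pr[d(v)\geq d_0]<(4\log^4 n+1)n^{-0.9}$. One minor slip: the hypothesis of Observation~\ref{obs:BinFrac} is $k\leq m/2=(n-1)/2$, not $k\leq (n-1)p/2$ as you wrote, but this is trivially satisfied since $d_0<\log^3 n$.
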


\begin{proof}

	 By Lemma \ref{Che0}, 
	$\Pr[d(v)\geq \log^2n\cdot \mathbb{E}[d(v)]]\leq {\rm{e}}^{-\log^2n \mathbb{E}[d(v)]}<n^{-0.9},$
	where the last inequality comes from the fact that $\mathbb E{[d(v)]}>\frac 1{2\log\log n}$.
	That is, $d_0<\log^2n\mathbb{E}[d(v)]\leq \log^2n\cdot np$. 
%
	By Observation~\ref{obs:Bin}	we have 
	$\Pr[v\in L_0]<(4\log^4n+1)n^{-0.9}. \qedhere $
\end{proof}

\medskip
Next, we will show that the set $L_0$ is not empty.

\begin{claim}\label{claim:L0nonempty_G}	Let $G\sim G(n,p)$. Then \whp there exists $v\in L_0$.
\end{claim}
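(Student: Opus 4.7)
The plan is to use the second moment method on $Y := |L_0| = \sum_{v \in [n]} \mathbf{1}_{d(v) \geq d_0}$. By definition of $d_0$ we have $\Pr[v \in L_0] \geq n^{-0.9}$ for every $v$, so $\mathbb{E}[Y] \geq n^{0.1} \to \infty$. It therefore suffices to show $\mathbb{E}[Y(Y-1)] \leq (1+o(1))(\mathbb{E}[Y])^2$, after which Chebyshev's inequality gives
$$\Pr[Y = 0] \leq \frac{\mathbb{E}[Y(Y-1)] + \mathbb{E}[Y] - (\mathbb{E}[Y])^2}{(\mathbb{E}[Y])^2} = o(1) + \frac{1}{\mathbb{E}[Y]} = o(1).$$

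To control the second moment, fix two distinct vertices $u,v$ and decouple their degrees by conditioning on whether the edge $uv$ is present. Writing $d(u) = \mathbf{1}_{uv \in E} + d'(u)$ and $d(v) = \mathbf{1}_{uv \in E} + d'(v)$, the variables $d'(u), d'(v) \sim \mathrm{Bin}(n-2, p)$ are independent of each other and of $\mathbf{1}_{uv \in E}$, so
$$\Pr[d(u) \geq d_0,\, d(v) \geq d_0] = p \cdot (\Pr[d'(u) \geq d_0 - 1])^2 + (1-p)(\Pr[d'(u) \geq d_0])^2.$$
By Observation~\ref{obs:BinDec}, both $\Pr[d'(u) \geq d_0]$ and $\Pr[d'(u) \geq d_0 - 1]$ are at most the corresponding tails of $\mathrm{Bin}(n-1, p)$, which I denote $p_0 := \Pr[d(v) \geq d_0]$ and $p_{0}^- := \Pr[d(v) \geq d_0 - 1]$ respectively.

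The key tool is Observation~\ref{obs:BinFrac}, which yields $p_{0}^- \leq (1 + \tfrac{4d_0}{(n-1)p}) p_0 =: A p_0$. Combining the two displays gives
$$\Pr[d(u) \geq d_0,\, d(v) \geq d_0] \leq (1 + pA^2) p_0^2.$$
The point is now to check that $pA^2 = o(1)$ in the regime $\tfrac{1}{n \log \log n} \leq p \leq \tfrac{\log^2 n}{n}$. Expanding,
$$pA^2 = p + \frac{8d_0}{n} + \frac{16 d_0^2}{n^2 p},$$
and using the bound $d_0 \leq np \log^2 n$ proved in Claim~\ref{claim:PrL0Gnp}, each of these three terms is $O(\log^c n / n)$ for some constant $c$ and hence $o(1)$.

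Summing over the $n(n-1)$ ordered pairs produces $\mathbb{E}[Y(Y-1)] \leq (1 + o(1)) n^2 p_0^2 = (1+o(1)) (\mathbb{E}[Y])^2$, which was the required estimate, completing the proof. The only place that could cause trouble is the verification that $pA^2 = o(1)$ uniformly across the sparse-to-moderately-dense range, but the explicit bound $d_0 \leq np\log^2 n$ together with $p \leq \log^2 n/n$ handles it in a single computation.
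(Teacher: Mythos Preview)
Your proof is correct, but it proceeds by a different route than the paper. The paper does \emph{not} use the second moment method here: instead it fixes a subset $U\subset [n]$ of size $n^{0.95}$ and counts vertices $v\in U$ with $e(\{v\},[n]\setminus U)\ge d_0$. These indicators are genuinely independent, so their sum $X_U$ is exactly binomial and Chernoff applies directly; the only extra work is invoking Lemma~\ref{BinEq} to show that the tail of $\Bin(n-n^{0.95},p)$ at $d_0$ is asymptotically the same as that of $\Bin(n-1,p)$.

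Your approach is the more standard second-moment calculation, handling the weak dependence between $d(u)$ and $d(v)$ by conditioning on the single shared edge $uv$ and then bounding $p_0^-\le A p_0$ via Observation~\ref{obs:BinFrac}. This is arguably more elementary (you avoid Lemma~\ref{BinEq} entirely), and the verification that $pA^2=o(1)$ is clean given the bound $d_0\le np\log^2 n$ from Claim~\ref{claim:PrL0Gnp}. The paper's trick of manufacturing independence buys exponential concentration rather than the $1/\mathbb{E}[Y]$ error you get, but for the bare statement $L_0\neq\emptyset$ this makes no difference.
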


\begin{proof}
First, let $n'=n-n^{0.95}$, $s=\log^{4}n$, and $k=d_0$. Let $X\sim \Bin(n-1,p)$ and $X'\sim \Bin(n-n^{0.95},p)$. Note that $np\leq \log^3n$ and thus by Lemma~\ref{Che0} we have that $\Pr[X\geq \log^4n]\leq {\rm{e}}^{-\log^4n}=o(n^{-0.9})$. Thus $d_0<\log^4n$. Also, $\frac {n-n'}{n'}=\Theta(\frac 1{n^{0.05}})$, thus by Lemma \ref{BinEq}, we have that $\Pr[X\geq d_0]=(1+o(1))\Pr[X'\geq d_0]$. 

Now, choose a subset $U\subset [n]$ of size $n^{0.95}$. Let $X_U$ be the number of vertices $v$ in $U$ for which $e(\{v\},[n]\setminus U)\geq d_0$.
	Then $X_U\sim \Bin(|U|,\tilde{p})$ where $\tilde{p}=\Pr[e(\{v\},[n]\setminus U)\geq d_0]$ for some $v\in U$. Note that $\tilde{p}=\Pr[e(v,[n]\setminus U)\geq d_0]=\Pr[X'\geq d_0]\geq (1-o(1))\Pr[X\geq d_0]\geq n^{-0.9}$. Therefore, $\mathbb E(X_U)=n^{0.95}\tilde{p}\geq n^{0.05}$. By Lemma \ref{Che}, $\Pr(X_U\leq \frac 12n^{0.05})\leq {\rm{e}}^{-\frac 18n^{0.05}}=o(1)$. Thus \whp $X_U\geq 1$ and $|L_0|\geq 1$.
\end{proof}

\medskip

We are now ready to show that \whp $L_0$ is an independent set.

\begin{claim}\label{claim:L0FarGnp}
	Let $G\sim G(n,p)$. Then for every $v,u\in L_0$, \whp $u\nsim v$.
\end{claim}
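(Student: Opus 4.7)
The plan is to apply a first-moment union bound over all pairs $u\ne v\in[n]$, estimating $\Pr[u\sim v,\ u,v\in L_0]$ and showing the total is $o(1)$. The tools are the edge-independence of $G(n,p)$ together with the tail estimates from Observation \ref{obs:Bin}.

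The first step is to exploit the fact that in $G(n,p)$ the $\binom{n}{2}$ potential edges are mutually independent. Fixing a pair $u\ne v$ and conditioning on the event $uv\in E(G)$, the remaining neighbours of $u$ in $[n]\setminus\{u,v\}$ and the remaining neighbours of $v$ in $[n]\setminus\{u,v\}$ are determined by two disjoint sets of independent Bernoulli$(p)$ edges, so
$$\Pr\bigl[u\sim v,\ u\in L_0,\ v\in L_0\bigr]=p\cdot\Pr\bigl[\mathrm{Bin}(n-2,p)\geq d_0-1\bigr]^2.$$

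The second step is to control the tail $\Pr[\mathrm{Bin}(n-2,p)\geq d_0-1]$. By monotonicity (Observation \ref{obs:BinDec}) it is at most $\Pr[d(w)\geq d_0-1]$ for $w\in[n]$, and then Observation \ref{obs:Bin} gives the bound $n^{-0.9}\log^{8}n$. Substituting and summing over the $\binom{n}{2}$ pairs, using the working-regime assumption $p\leq\log^{2}n/n$, we get
$$\sum_{u\ne v}\Pr\bigl[u\sim v,\ u,v\in L_0\bigr]\leq n^{2}\cdot p\cdot n^{-1.8}\log^{16}n=O\bigl(n^{-0.8}\log^{18}n\bigr)=o(1).$$
By Markov's inequality, whp no such pair exists, which is exactly the claim.

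The main (and relatively minor) obstacle is the shift from $d_0$ to $d_0-1$: since $d_0$ is defined as the largest integer with $\Pr[X\geq d_0]\geq n^{-0.9}$, the quantity $\Pr[X\geq d_0-1]$ can be noticeably larger than $n^{-0.9}$. However, Observation \ref{obs:Bin} shows this introduces at most a polylogarithmic factor $\log^{8}n$, which is comfortably absorbed by the $n^{-0.8}$ saving coming from the two independent tails. No further structural information about $L_0$ is required for this step.
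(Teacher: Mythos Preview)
Your proof is correct and follows essentially the same approach as the paper: condition on $u\sim v$, use the edge-independence of $G(n,p)$ to factor the conditional probability as $\Pr[\Bin(n-2,p)\geq d_0-1]^2$, bound this via Observation~\ref{obs:Bin} by $(n^{-0.9}\log^{8}n)^2$, and finish with a union bound over pairs using $p\leq\log^2 n/n$. The paper's argument is identical up to notation (it writes $X_2\sim\Bin(n-2,p)$ and obtains the same $O(n^{-0.8}\log^{18}n)$ estimate).
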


\begin{proof}
	Let $X_i\sim \Bin(n-i,p)$, and
	let $v_1,v_2$ be vertices and let $A$ be the event that $v_1\sim v_2$. Then,
	$\Pr[A]= p \leq \frac {\log^2n}n$. 
	Observe that by Claim \ref{claim:PrL0Gnp} (and Observations~\ref{obs:BinDec} and ~\ref{obs:Bin}) 
	\begin{align*}
	\Pr[v_1,v_2\in L_0 \mid A]&=\Pr[d(v_1)\geq d_0 \mid A]\Pr[d(v_2)\geq d_0 \mid A,\ v_1\in L_0]\\
	&\leq \Pr[X_2\geq d_0-1]\Pr[X_2\geq d_0-1]\\
	&\leq \Pr[X_1\geq d_0-1]^2=O(n^{-1.8}\log^{16}n).
	\end{align*}
	Going over all choices of $v_1,v_2$, the probability that there exists an edge between two vertices in $L_0$ is at most $n^{2}\Pr[A]\Pr\left[v_1,v_2\in L_0 \mid A\right]=O\left(n^2\cdot\frac {\log^2n}n \cdot n^{-1.8}\log^{16} n\right)=o(1). \qedhere $
\end{proof}

\vspace{5mm}
\section{The chromatic index of random multigraphs $\mnm$}

In this section we prove Theorems \ref{GSforRM} and \ref{thm:nEven}. We divide the proof into three cases. In Section~\ref{sec:m<nlogn} we prove that if $m\leq n\log^{100}n$ then \whp $\chi'(M)=\Delta$. In Section \ref{sec:m<n2logn} we show that that if $n\log^{100}n\leq m\leq  n^2\log^2 n$ then \whp also here $\chi'(M)=\Delta$. In Section \ref{sec:nEven} we show that if $n$ is even, and $m=\omega(n^2 \log n)$ then \whp $\chi'(M)=\Delta$, completing the proof of Theorem \ref{thm:nEven} and the proof of Theorem \ref{GSforRM} for the even case. In Section \ref{sec:OddN} we show that if $n$ is odd and $m=\omega(n^2\log n)$ then \whp $\chi'(M)=\Delta$ or $\lceil \rho(M) \rceil$. Then, in Section~\ref{sec:alg} we discuss the algorithmic aspects of our proofs.

In the entire section we let  $d_1 \geq d_2 \geq \dots \geq d_n$ be the (ordered) degree sequence of a multigraph $M$.

\vspace{3mm}
\subsection{The case $m\leq n\log^{100} n$}\label{sec:m<nlogn}

For this case we act as follows.
We first show that for very small values of $m$, say $m=O(\frac n{\log\log n})$, a typical multigraph is actually a forest. For the remaining values of $m$, we will show that the typical edge multiplicity in this case is at most two. Then, the main idea is to remove a matching from the multigraph to create a simple graph with maximum degree $\Delta-1$ for which the vertices of degree $\Delta-1$ form an independent set (similarly to the proof of Theorem~\ref{thm:Gnp}). Thus we can colour this graph with $\Delta-1$ colours, and colour the removed matching with one additional colour. \\

We start with a claim estimating the probability of a given set of edges to have large multiplicities  in $M\sim M(n,m)$.
	
	\begin{claim}\label{claim:edgeSet}
		Let $M\sim M(n,m)$ and let $r,k_1,\dots,k_r$ be  integers. Let $A=\{e_1,\dots,e_r\}$ be a set of $r$ pairs from $\binom {[n]}2$. Let $X$ be the event that  $\mu(e_i)\geq k_i$ for all $i\in [r]$. Then $\Pr[X]\leq\left(\frac {5{\rm{e}}m}{n^2}\right)^{\sum_{i=1}^r k_i}$.
	\end{claim}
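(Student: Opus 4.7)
The plan is to use the representation of $M\sim M(n,m)$ as an i.i.d.\ sequence $(f_1,\dots,f_m)$, where each $f_j$ is uniform on $\binom{[n]}{2}$, and to union-bound over the choices of which $k_i$ of the $m$ slots realise each $e_i$.

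Without loss of generality I would assume $k_i\geq 1$ for every $i$ (any $e_i$ with $k_i=0$ can be dropped from $A$ without affecting either $X$ or $\sum k_i$). Set $K:=\sum_{i=1}^r k_i$. The event $X$ is contained in the event that there exist pairwise disjoint subsets $S_1,\dots,S_r\subseteq [m]$ with $|S_i|=k_i$ such that $f_j=e_i$ for every $i\in[r]$ and every $j\in S_i$. For any fixed choice of such disjoint $S_i$, independence gives that the probability of this configuration equals $\prod_{i=1}^r \left(1/\binom{n}{2}\right)^{k_i}=\bigl(2/(n(n-1))\bigr)^{K}$.

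I would then bound the number of such tuples $(S_1,\dots,S_r)$ by the multinomial coefficient $\binom{m}{k_1,\dots,k_r,\,m-K}$, which is at most $\prod_{i=1}^r\binom{m}{k_i}$. Using the standard estimate $\binom{m}{k}\leq ({\rm{e}}m/k)^k$ together with $k_i\geq 1$, this product is at most $\prod_{i=1}^r ({\rm{e}}m)^{k_i}=({\rm{e}}m)^K$. Combining these two ingredients via the union bound yields
\[
\Pr[X]\leq \left(\frac{2{\rm{e}}m}{n(n-1)}\right)^{K}\leq \left(\frac{5{\rm{e}}m}{n^2}\right)^{K},
\]
where the last inequality is equivalent to $5n(n-1)\geq 2n^2$ and holds for all $n\geq 2$. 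The argument is routine and I do not foresee any real obstacle; the one subtle point is that although the multiplicities $\mu(e_i)$ are not independent, summing over disjoint slot-patterns via the multinomial (rather than naively treating the $e_i$ independently) loses only a constant factor that is absorbed into~$5{\rm{e}}$.
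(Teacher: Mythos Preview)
Your argument is correct. The paper takes a different route: it invokes a negative-correlation inequality (citing Dubhashi--Priebe--Ranjan and McDiarmid) to obtain $\Pr[X]\leq\prod_{i=1}^r\Pr[\mu(e_i)\geq k_i]$, and then bounds each marginal via Lemma~\ref{Che0}. Your approach bypasses the need for any external negative-dependence result by working directly with the i.i.d.\ slot representation of $M(n,m)$ and union-bounding over disjoint slot patterns; this is more elementary and self-contained, and even gives the slightly sharper base constant $2{\rm e}/(n(n-1))$ in place of the paper's $4{\rm e}/(n(n-1))$, both of which fit under $5{\rm e}/n^2$. The paper's version is more modular---once negative association is accepted, the factorisation is a one-liner---but at the cost of the outside citations. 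One small remark: your closing comment about ``losing only a constant factor'' is unnecessary, since after bounding the multinomial by $\prod_i\binom{m}{k_i}$ your bound coincides exactly with what independence would give; nothing is lost.
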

	
	\begin{proof}
		Let $A_0\subseteq A$. Since $\mu(e)\sim \Bin(m,\frac 2{n(n-1)})$, we have $\Pr[\mu(e_i)\geq k_i\mid \mu(e_j)\geq k_j\ \forall e_j\in A_0 ]\leq \Pr[\mu(e_i)\geq k_i]$ for every $i$ with $e_i\notin A_0$. The inequality follows, for example, from Theorem 10 in \cite{Dub} or from the main theorem in \cite{Mc}.  By Lemma~\ref{Che0}, $\Pr[\mu(e)\geq k]\leq \left(\frac {4{\rm{e}}m}{n(n-1)}\right)^k \leq \left(\frac {5{\rm{e}}m}{n^2}\right)^k $. Then,
		\[ \Pr[X]\leq \prod_{i=1}^r\Pr[\mu(e_i)\geq k_i]\leq \left( \frac {5{\rm{e}}m}{n^2}\right)^{\sum_{i=1}^r k_i}. \qedhere\]
	\end{proof}

Let us first deal with the case that $m\leq \frac {100n}{\log\log n}$. Let $r\in [n]\setminus \{1\}$ and let $v_1,\dots,v_r$ be $r$ vertices. Denote by $X_C$ the event that the cycle $C=(v_1,\dots,v_r,v_1)$ appears in $M\sim\mnm$. Then by Claim~\ref{claim:edgeSet}, $\Pr[X_C]\leq (\Pr[\mu(v_1v_2)\geq 1])^r\leq (\frac {5{\rm{e}}m}{n^2})^r\leq(\frac {1500}{n\log\log n})^r$. Thus the probability that there exists a cycle in $M\sim \mnm$ is at most
$$\sum_{r=2}^nn^r\Pr[X_C]\leq\sum_{r=2}^n\left(\frac {1500}{\log\log n}\right)^r
=o(1).$$
Therefore, for  $m\leq \frac {100n}{\log\log n}$, $M\sim \mnm$ is \whp a forest and thus by Theorem \ref{vizing} \whp $\chi'(G)=\Delta(G)$.\\  

\noindent
From now on we will assume that $m\geq \frac n{\log\log n}$.

	Similarly to the proof of Theorem \ref{thm:Gnp}, we will consider vertices of very large degrees.
	 Let $X\sim \Bin(m,\frac 2n)$, and define 
\begin{align*}
	d_0&=\max \{d \mid \Pr[X\geq d]\geq n^{-0.9}\ \},\ \ L_0=\{v\in V\mid d(v)\geq d_0 \},\\
	d'_0&=\max \{d \mid \Pr[X\geq d]\geq n^{-0.95}\ \},\ \ L'_0=\{v\in V\mid d(v)\geq d'_0 \}.
\end{align*}
	  Note that for $v\in V$ we have
	  \begin{align*}
	  &\Pr[d(v)\geq d_0+1]<n^{-0.9},\ \ \Pr[v\in L_0]\geq n^{-0.9},\\
	  &\Pr[d(v)\geq d'_0+1]<n^{-0.95},\ \ \Pr[v\in L'_0]\geq n^{-0.95}.
	  \end{align*}
	   Furthermore, by Observation~\ref{obs0:maxdeg}, $d_0\leq d'_0<\frac {2m\log^2n}n$.\\


We would like to bound from above the probability that a vertex is in $L_0$.

\begin{claim} \label{claim:PrL0}
	Let $M\sim \mnm$. Then for $v\in [n]$,  $\Pr[v\in L_0]= O(n^{-0.9}\log^2 n)$ and $\Pr[v\in L'_0]= O(n^{-0.95}\log^2 n)$.
\end{claim}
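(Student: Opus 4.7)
The plan is to mimic the strategy used in Claim~\ref{claim:PrL0Gnp} for the simple-graph case, now applied to the binomial degree distribution in the multigraph model. By definition of $d_0$ we already have $\Pr[d(v) \geq d_0+1] < n^{-0.9}$, so the main task is to compare $\Pr[d(v) \geq d_0]$ with $\Pr[d(v) \geq d_0+1]$ and show that the extra term $\Pr[d(v)=d_0]$ only blows up the bound by a polylog factor.

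Recall that $d(v) \sim \Bin(m, 2/n)$, so $mp = 2m/n$ with $p = 2/n$. Observation~\ref{obs0:maxdeg} tells us (since $m \geq n/\log\log n$) that $d_0 \leq d_0' < 2m\log^2 n/n = mp \cdot \log^2 n$, and in particular $d_0 \leq m/2$ for $n$ large enough. Therefore Observation~\ref{obs:BinFrac} applies and gives
\[
\Pr[d(v) \geq d_0] \;\leq\; \Bigl(1+\frac{4 d_0}{mp}\Bigr)\Pr[d(v) \geq d_0+1] \;<\; (1+4\log^2 n)\cdot n^{-0.9}\;=\;O(n^{-0.9}\log^2 n).
\]
This yields the first estimate. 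The argument for $L_0'$ is identical, using the definitional inequality $\Pr[d(v) \geq d_0'+1] < n^{-0.95}$ together with the same upper bound $d_0' < mp \log^2 n$ from Observation~\ref{obs0:maxdeg}, to obtain $\Pr[v \in L_0'] = O(n^{-0.95}\log^2 n)$.

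There is no real obstacle here — the claim is essentially a one-step consequence of Observation~\ref{obs:BinFrac} once the crude upper bound $d_0 \leq mp\log^2 n$ has been recorded from Observation~\ref{obs0:maxdeg}. The only point one has to be a bit careful about is checking the hypothesis $d_0 \leq m/2$ needed for Observation~\ref{obs:BinFrac}, which follows since $2m\log^2 n/n \leq m/2$ for large $n$ (indeed, we are in the regime $m \leq n\log^{100}n$, and in fact $n \geq 4\log^2 n$ is all that is required).
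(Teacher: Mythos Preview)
Your proof is correct and essentially identical to the paper's argument: the paper also invokes the bound $d_0\le d'_0<\frac{2m\log^2 n}{n}$ (already recorded just before the claim via Observation~\ref{obs0:maxdeg}) and then applies Observation~\ref{obs:BinFrac} in exactly the same way to get $\Pr[v\in L_0]\le(1+\frac{4nd_0}{2m})\Pr[d(v)\ge d_0+1]<(4\log^2 n+1)n^{-0.9}$. Your extra care in verifying the hypothesis $d_0\le m/2$ of Observation~\ref{obs:BinFrac} is a nice touch that the paper leaves implicit.
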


\begin{proof}
%
%

	Since $d(v)\sim \Bin(m,\frac 2n)$ and $d_0\leq d'_0<\frac {2m\log^2n}n$,  by Observation~\ref{obs:BinFrac} we have that $$\Pr[v\in L_0]=\Pr[d(v)\geq d_0]\leq \left(1+\frac{4nd_0}{2m}\right)\Pr[d(v)\geq d_0+1]<(4\log^2n+1)n^{-0.9},$$ and similarly $\Pr[v\in L'_0]=\Pr[d(v)\geq d'_0]
	=O(n^{-0.95}\log^{2}n)$.
%
\end{proof}

\medskip

		Next, we will prove that \whp the vertices in the set $L_0$ are ``far" from each other.
		
		\begin{claim}\label{claim:L0Far}
			\whp for every $v,u\in L_0$,  there is no path of length at most 2 connecting $u$ and $v$.
		\end{claim}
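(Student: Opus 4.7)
The plan is to show the two bad events separately: (i) an edge joining two vertices of $L_0$, and (ii) a vertex $w$ adjacent to two distinct vertices of $L_0$. Both cases will follow the template of Claim~\ref{claim:L0FarGnp}, with Claim~\ref{claim:edgeSet} used to control the probability that two prescribed edges simultaneously appear in $M$.

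For (i), I would fix distinct $v_1,v_2\in[n]$, and start from
\[
\Pr[v_1\sim v_2,\ v_1,v_2\in L_0]\leq \Pr[\mu(v_1v_2)\geq 1]\cdot\Pr[v_1,v_2\in L_0\mid v_1\sim v_2],
\]
using $\Pr[\mu(v_1v_2)\geq 1]\leq 2m/(n(n-1))$. Conditioning on a fixed index $i\in[m]$ realising the edge $v_1v_2$ (and invoking the symmetry of the model together with a union bound over $i$), the remaining $m-1$ pairs are i.i.d.\ uniform over $\binom{[n]}{2}$, so the conditional law of $d(v_j)$ is dominated by $1+\Bin(m-1,2/n)$, and hence $\Pr[v_j\in L_0\mid v_1\sim v_2]\leq \Pr[X\geq d_0-1]$ for $X\sim \Bin(m,2/n)$. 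Applying Observation~\ref{obs:BinFrac} twice, combined with the bound $d_0<2m\log^2 n/n$ from Observation~\ref{obs0:maxdeg} (so that $4d_0/(mp)=O(\log^2 n)$), yields $\Pr[X\geq d_0-1]=O(\log^4 n)\cdot n^{-0.9}$. Negative association of the degrees then decouples the two vertices, so the joint conditional probability is $O(\log^8 n\cdot n^{-1.8})$, and summing over $\binom{n}{2}$ pairs gives a total bound of $O(m\log^8 n\cdot n^{-1.8})=O(\log^{108} n\cdot n^{-0.8})=o(1)$ throughout $m\leq n\log^{100} n$.

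For (ii), I would fix distinct $v_1,w,v_2$ and use Claim~\ref{claim:edgeSet} to bound $\Pr[\mu(v_1w)\geq 1,\,\mu(wv_2)\geq 1]\leq (5\mathrm{e}m/n^2)^2$. Conditioning on fixed indices $i\neq j\in[m]$ realising the two edges, an analogous shifted-tail estimate gives $\Pr[v_1,v_2\in L_0\mid\text{both edges present}]=O(\log^8 n\cdot n^{-1.8})$, and a union bound over the $O(n^3)$ triples contributes $O(n^3\cdot m^2/n^4\cdot \log^8 n\cdot n^{-1.8})=O(\log^{208}n\cdot n^{-0.8})=o(1)$ in the same range of $m$. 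The main technical obstacle will be the conditioning on the existence of specific edges in $M\sim M(n,m)$, which I plan to handle by fixing positions in $[m]$ for those edges (using the symmetry of the model) and appealing to the negative-association result for the incidence indicators cited in the proof of Claim~\ref{claim:edgeSet}, in order to replace the joint tail $\Pr[d(v_1)\geq k,\,d(v_2)\geq k]$ by a product of marginal tails.
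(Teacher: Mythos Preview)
Your proposal is correct and follows the same overall template as the paper: bound the probability that a prescribed short path exists, bound the conditional probability that both endpoints lie in $L_0$, and take a union bound over all candidate paths. The paper treats the cases $r=2$ and $r=3$ in a single argument rather than splitting into your cases~(i) and~(ii), but that is cosmetic.

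The one genuine technical difference is in how the decoupling of $\{d(v_1)\geq d_0\}$ and $\{d(v_2)\geq d_0\}$ is handled. The paper conditions on the path $X_P$, invokes Observation~\ref{obs:multiplicity} to cap each path-edge multiplicity at~$2$ (paying an additive $o(1/n^2)$), and then applies Remark~\ref{re:condi} to handle the second vertex sequentially, yielding the shifted tail $\Pr[d(v)\geq d_0-2]$. You instead fix the edge positions in $[m]$ and appeal to negative association of the incidence indicators $\{\mathbf{1}[v_1\in e_j],\mathbf{1}[v_2\in e_j]\}_j$ to factor the joint tail. Both are valid; your route sidesteps the multiplicity bound and gives a slightly sharper shift ($d_0-1$ rather than $d_0-2$), while the paper's route reuses Remark~\ref{re:condi}, which is already set up for later sections. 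One small note: the negative-dependence references~\cite{Dub,Mc} are invoked in Claim~\ref{claim:edgeSet} for edge multiplicities, not directly for degree indicators, so you should state explicitly why the pair $(\mathbf{1}[v_1\in e_j],\mathbf{1}[v_2\in e_j])$ is negatively associated for each $j$ and why this propagates to the sums---this is straightforward but worth spelling out.
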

		
		\begin{proof}
			Let $M\sim M(n,m)$ and let $W$ be a set of ordered vertices $\{v_1,\dots,v_r\}$,  $2\leq r\leq 3$. Let $X_P$ be the event that $P=(v_1,\dots,v_r)$ is a path in $M$. Then by Claim~\ref{claim:edgeSet}
			$\Pr[X_P]= O((\frac m{n^2})^{r-1}) $. 
			Note that $\Pr\left[d(v)\geq d_0-2\right]=\Pr\left[d(v)= d_0-2\right]+\Pr\left[d(v)\geq d_0-1\right]$. By Observation \ref{obs:BinFrac}, 
			$$\frac {\Pr[d(v)=d_0-2]}{\Pr[d(v)=d_0-1]}<\frac {2(d_0-2)n}{m}\leq 4\log^2 n,$$
			and
			$\Pr\left[d(v)\geq d_0-2\right]\leq (4\log^2n+1)\Pr[d(v)\geq d_0-1] \leq  20\log^4n\Pr[v\in L_0]$.\\
					
			Recall that by (the proof of) Observation~\ref{obs:multiplicity}, for $i\in [r-1]$, $\mu(v_iv_{i+1})\leq 2$ with probability $1-o(\frac 1{n^2})$.  
			Also, note that conditioning on the existence of the path $P$ (with multiplicities), the degree of $v_1$ (or $v_r$) outside $P$ is distributed according to $\Bin(m-\sum_{i=1}^{r-1}\mu(v_iv_{i+1}),p')$ where $p'$ is at most $\frac {n-2}{\binom n2-r+1}<\frac 2n$.
			 Then by Observation~\ref{obs:BinDec}, Remark~\ref{re:condi}, and Claim \ref{claim:PrL0}, 
			\begin{align*}
			\Pr[v_1,v_r\in L_0 \mid X_P]&=\Pr[d(v_1)\geq d_0 \mid X_P]\Pr[d(v_r)\geq d_0 \mid X_P,\ v_1\in L_0]\\
			&\leq \left(\Pr\left[d(v)\geq d_0-2\right]+o\left(\frac 1{n^2}\right)\right)^2\\
			&\leq\left(20\log^4n\Pr[v\in L_0]+o\left(\frac 1{n^2}\right)\right)^2\\
			&=O(n^{-1.8}\log^{12}n).
			\end{align*}
			Going over all choices of $v_1,\dots ,v_r$, the probability that there exists a path of length at most 2 with two endpoints in $L_0$ is at most \[\sum_{r=1}^{3}n^{r}\Pr[X_P]\Pr[v_1,v_r\in L_0 \mid X_P]=o(1).\qedhere \]
		\end{proof}

\medskip

We will show now that the set $L_0$ contains at least two vertices of different degrees.

\begin{claim}\label{claim:L0nonempty}	\whp there exist $v,u\in V$ such that $v,u\in L_0$ and $d(v)\neq d(u)$.
\end{claim}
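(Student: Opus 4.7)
The plan is to show \whp that both (a) there is a vertex $v \in [n]$ with $d(v) = d_0$, and (b) there is a vertex $u \in [n]$ with $d(u) \geq d_0+1$. Both such vertices lie in $L_0$ and their degrees differ, yielding the claim.

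Let $Y_{=} = |\{v : d(v) = d_0\}|$ and $Y_{>} = |\{v : d(v) \geq d_0+1\}|$, so $|L_0| = Y_{=} + Y_{>}$. Set $X \sim \Bin(m,2/n)$, so that $\Pr[X \geq d_0] \geq n^{-0.9}$ by definition. I first want to show $\mathbb E[Y_{=}], \mathbb E[Y_{>}] \to \infty$. Using the explicit ratio $\Pr[X = d_0+1]/\Pr[X = d_0] = \frac{m-d_0}{d_0+1}\cdot\frac{2/n}{1-2/n}$, which in the range $n/\log\log n \leq m \leq n\log^{100}n$ is bounded between $\Omega(1/\log^3 n)$ and $O(1)$, one checks that each of $\Pr[X = d_0]$ and $\Pr[X \geq d_0+1]$ is at least a $1/\mathrm{polylog}(n)$ fraction of $\Pr[X \geq d_0]$. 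The verification naturally splits into the near-Poisson regime ($mp = O(1)$, where the point mass at $d_0$ carries most of the upper tail so $\Pr[X = d_0]$ is close to $\Pr[X \geq d_0]$, while $\Pr[X \geq d_0+1] \gtrsim (mp/d_0)\cdot \Pr[X = d_0]$) and the concentrated regime ($mp \to \infty$, where $\Pr[X = d_0]$ is only a $\Theta(\sqrt{\log n/(mp)})$ fraction of $\Pr[X \geq d_0]$, but most of the tail sits strictly beyond $d_0$). In either case $\mathbb E[Y_{=}], \mathbb E[Y_{>}] \geq n^{0.1}/\mathrm{polylog}(n) \to \infty$.

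For the second moments I would adapt the conditional-binomial argument of Claim~\ref{claim:sBin} and Remark~\ref{re:condi}. Conditioning on $d(u) = k$, the degree of a distinct vertex $v$ is, up to the correction $\mu(uv) \leq 2$ (holding \whp by Observation~\ref{obs:multiplicity}), distributed as $\Bin(m-k, 2/(n-1))$. Since this distribution is essentially the same as $\Bin(m, 2/n)$ in our regime, the standard approximation lemmas (in the spirit of Lemma~\ref{lem:BinChange1} and Claim~\ref{claim:sBin}) give $\Pr[d(v) = d_0 \mid d(u) = d_0] \leq (1+o(1))\Pr[d(v) = d_0]$, and analogously for the tail event $\{d(v) \geq d_0+1\}$. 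Summing over ordered pairs $(u,v)$ then yields $\mathbb E[Y_{=}(Y_{=}-1)] \leq (1+o(1))(\mathbb E Y_{=})^2$ and likewise for $Y_{>}$. Chebyshev's inequality, exactly as at the end of the proof of Claim~\ref{claim:d_-}, concludes that $Y_{=} \geq 1$ and $Y_{>} \geq 1$ \whp, and a union bound completes the argument.

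The main technical obstacle is the lower bound on $\mathbb E[Y_{=}]$: one must control the ratio $\Pr[X=d_0]/\Pr[X \geq d_0]$ uniformly across two rather different regimes of $mp$. The remaining steps are a routine second-moment calculation that mirrors Claim~\ref{claim:d_-}.
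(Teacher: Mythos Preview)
Your approach is viable but differs from the paper's in a way that makes the details trickier. The paper does not use the point mass $Y_=$ at all; instead it introduces the second threshold $d'_0$ (with $\Pr[X \geq d'_0] \geq n^{-0.95}$) and the corresponding set $L'_0 \subseteq L_0$, and runs a second-moment argument for \emph{both tail counts} $|L_0|$ and $|L'_0|$. Since $\mathbb E|L_0| = \Omega(n^{0.1})$ while $\mathbb E|L'_0| = O(n^{0.05}\log^2 n)$, concentration gives $|L_0| > |L'_0| > 0$ \whp, so some vertex in $L_0$ has degree below $d'_0$ and some has degree at least $d'_0$. This is cleaner because tails of binomials are more robust than point masses: the comparison $\Pr[X' \geq d] = (1+o(1))\Pr[X \geq d]$ comes directly from Lemma~\ref{BinEq}, which is valid across the whole range $n/\log\log n \leq m \leq n\log^{100}n$.

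Two concrete issues with your write-up. First, the lemmas you cite for the approximation step, Lemma~\ref{lem:BinChange1} and Claim~\ref{claim:sBin}, both rely on the DeMoivre--Laplace regime $mp \to \infty$ (and the further hypotheses stated there); in the present section $mp = 2m/n$ can be as small as $2/\log\log n$, so neither applies. The paper instead uses Lemma~\ref{BinEq}, which has no such restriction. Second, saying the conditional law of $d(v)$ is ``up to the correction $\mu(uv) \leq 2$'' a shifted binomial is not by itself enough for the point-mass event: in the Poisson-like regime the ratio $\Pr[X = d_0 - 2]/\Pr[X = d_0]$ is of order $(d_0/mp)^2$, which is polylogarithmic and would destroy the $(1+o(1))$ factor you need for Chebyshev. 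You must either observe that the $\mu(uv)=0$ case dominates (since $\Pr[\mu(uv)\geq 1 \mid d(u)=d_0] = O(d_0/n)$, it does), or---as the paper does---first invoke Claim~\ref{claim:L0Far} to restrict the second-moment sum to pairs with $u \nsim v$. With these fixes your route goes through, but the paper's two-threshold device sidesteps both problems.
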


\begin{proof}
	Since $L'_0\subseteq L_0$, it is enough to prove that \whp $|L_0|>|L'_0|>0$.
	First, by Claim~\ref{claim:PrL0}, we have that  
	$\Pr\left[v\in L'_0 \right]\leq n^{-0.95}\log^3n$, and $\Pr[v\in L_0]>n^{-0.9}$. Therefore, $\Pr[v\in L_0]=\omega\left(\Pr[v\in L'_0]\right)$. 
	Recall that by Claim~\ref{claim:L0Far} \whp every two vertices in $L_0$ are not connected. 
	
	Let $X\sim \Bin(m,\frac 2n)$ $X'\sim(m,\frac 2{n-1})$, and note that $\frac {2m}n\leq 2\log^{100}n$, thus by Lemma \ref{Che0} we have $\Pr[X>\log^{200}n]\leq {\rm{e}}^{-\log^{200}n}=o(n^{-0.95})$. 
	Let $(L,d)\in\{(L_0,d_0),(L'_0,d'_0)\}$
	and recall that $|L|=\sum_{v\in[n]}\textbf{1}_{d(v)\geq d}$. By Lemma~\ref{BinEq}, for  $s=\log^{200}n$ and $k=d$, we have that $\Pr[X'\geq d]=(1+o(1))\Pr[X\geq d]$.  Using Remark~\ref{re:condi} and Observation~\ref{obs:BinDec} we have that
	\begin{align*}
	\mathbb {E}(|L|^2) 
	&= \sum_{v,u\in[n]}\Pr[d(v)\geq d,\ d(u)\geq d]= \sum_{\substack{v,u\in[n]\\ u\neq v}}\Pr[d(v)\geq d,\ d(u)\geq d]+\mathbb{E}(|L|)\\
	& = \sum_{\substack{v,u\in[n]\\ u\neq v,\ u\nsim v}}\Pr[d(v)\geq d,\  d(u)\geq d]+\mathbb{E}(|L|)+o(1)\\  
	& = \sum_{\substack{v,u\in[n]\\ u\neq v,\ u\nsim v}}\Pr[d(u)\geq d]\Pr[d(v)\geq d\mid   d(u)\geq d]+\mathbb{E}(|L|)+o(1)\\
	& \leq n^2\Pr[X\geq d]\Pr[X'\geq d]+\mathbb{E}(|L|)+o(1)\\		
	& \leq (1+o(1))n^2\Pr[X\geq d]^2+\mathbb{E}(|L|)+o(1)\\
	& \leq (1+o(1))\mathbb E(|L|)^2+\mathbb{E}(|L|)+o(1).
	\end{align*} 
	All in all,  by Chebyshev inequality we have that
	$$\Pr\left[ \Bigl||L|-\mathbb E(|L|)\Bigr|>\frac 12\mathbb E(|L|)\right]\leq \frac {\text{Var}(|L|)}{\mathbb E(|L|)^2/4}\leq \frac {(1+o(1))\mathbb E(|L|)^2+\mathbb{E}(|L|)+o(1)-\mathbb E(|L|)^2}{\mathbb E(|L|)^2/4}=o(1).$$
	Therefore, \whp 
	$|L_0|\geq \frac 12\mathbb E(|L_0|)=\Omega(n^{0.1})$, 
	$|L'_0|\geq \frac 12\mathbb E(|L'_0|)=\Omega(n^{0.05})$,
	$|L'_0|\leq \frac 32\mathbb E(|L'_0|)=O(n^{0.05}\log^2n)$, and the claim follows.
\end{proof}

\medskip

In the next claim we show that \whp every two edges with multiplicity two are far from each other.

\begin{claim}\label{claim0:MultiDisjoint}
	Let $M\sim \mnm$. Then for every $3\leq r\leq 5$ and for every   $v_1,\dots,v_r\in [n]$, the probability that $\mu(v_1v_2),\mu(v_{r-1}v_r)\geq 2$, and $\mu(v_2v_3),\dots, \mu(v_{r-2}v_{r-1})\geq 1$ is $o(1)$. 
	In other words,  \whp for every pair of edges $e_1,e_2\in \binom{[n]}{2}$ with multiplicity at least 2, the distance between $e_1$ and $e_2$ is  at least three.
\end{claim}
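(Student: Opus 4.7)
The plan is a straightforward union-bound argument using Claim~\ref{claim:edgeSet}, which was proved precisely to handle events of this form.

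Fix $r \in \{3,4,5\}$ and an ordered tuple of distinct vertices $v_1,\dots,v_r$. The event in question specifies $r-1$ edges on the path $v_1 v_2 \cdots v_r$: the two endpoint edges $v_1v_2$ and $v_{r-1}v_r$ are required to have multiplicity at least $2$, while the $r-3$ intermediate edges $v_2v_3,\dots,v_{r-2}v_{r-1}$ are required to have multiplicity at least $1$. Summing the required multiplicities gives $2+2+(r-3)=r+1$ in each of the three cases $r=3,4,5$. By Claim~\ref{claim:edgeSet} this probability is at most $\left(\tfrac{5\mathrm{e}m}{n^2}\right)^{r+1}$.

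Next, one takes a union bound over the at most $n^r$ ordered tuples $(v_1,\dots,v_r)$. Using the assumption $m \le n\log^{100}n$ from this subsection, the expected number of such configurations is bounded by
\[
n^r \cdot \left(\frac{5\mathrm{e}m}{n^2}\right)^{r+1} \;\le\; \frac{(5\mathrm{e}\log^{100}n)^{r+1}}{n} \;=\; O\!\left(\frac{\log^{600}n}{n}\right) \;=\; o(1),
\]
for each $r \in \{3,4,5\}$. Since there are only three values of $r$, summing preserves the $o(1)$ bound, and Markov's inequality (applied to the integer-valued count of such configurations) then gives that with high probability no such configuration exists.

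The only bookkeeping points are (i) checking that the sum of required multiplicities is indeed $r+1$ in each of the three cases (already done above), and (ii) noting that if the vertices $v_1,\dots,v_r$ are not all distinct then the induced edge set is strictly smaller and the event falls under the union bound for a smaller value of $r$, so it does not affect the calculation. I do not anticipate any real obstacle here: the heavy lifting (the correlation inequality underlying Claim~\ref{claim:edgeSet}) has already been carried out, and what remains is a direct counting estimate in the regime $m \le n\log^{100}n$.
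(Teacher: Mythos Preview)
Your proposal is correct and follows essentially the same approach as the paper: apply Claim~\ref{claim:edgeSet} to bound the probability of a fixed configuration by $\left(\tfrac{5\mathrm{e}m}{n^2}\right)^{r+1}$, then take a union bound over the at most $n^r$ tuples and use $m\le n\log^{100}n$. The only cosmetic difference is that the paper bounds the slightly larger event where \emph{some} two edges along the path (not necessarily the endpoint ones) have multiplicity at least~$2$, incurring an extra $(r-1)(r-2)$ factor; this generality is unnecessary for the claim as stated, and your version is marginally cleaner.
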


\begin{proof}
%
Let $r$ be an integer. For $r$ vertices $v_1,\dots,v_r$ let $X_r$ be the event that there exist $i\neq j\in [r-1]$ such that $\mu(v_iv_{i+1})\geq 2$, $\mu(v_jv_{j+1})\geq 2$ and for every $k\in [r-1]\setminus \{i,j\}$, $\mu(v_kv_{k+1})\geq1$. Then by Claim~\ref{claim:edgeSet}
\begin{align*}
\Pr[X_r]\leq&(r-1)(r-2)\left(\frac {5{\rm{e}}m}{n^2}\right)^{4+(r-3)}\leq r^2\left(\frac {5{\rm{e}}m}{n^2}\right)^{r+1}.
\end{align*}
Therefore, the probability that there exist such $r$ vertices is at most $n^r\Pr[X_r]=O( \frac {\log^{100(r+1)}n}{n})$. Thus, the probability that there exist two edges $e,e'$ with $\mu(e),\mu(e')\geq 2$ and distance at most two is at most  $\sum_{r=3}^{5}n^r\Pr[X_r]=\sum_{r=3}^{5}O(\frac {\log^{100(r+1)}n}{n})=o(1)$.
\end{proof}

Now we are ready to prove Theorem \ref{GSforRM}  for the case $\frac n{\log\log n}\leq m\leq n\log^{100} n$.
\begin{proof}
	Let $M\sim \mnm$ for $m\leq n\log^{100} n$. Let $d_0=\max \{d \mid \Pr(\Bin(m,\frac 2n)\geq d)\geq n^{-0.9}\ \}$, and $L_0=\{v\in V\mid d(v)\geq d_0 \}$. Let $\Delta$ be the maximum degree in $M$. By Claim \ref{claim:L0nonempty} we have that \whp all vertices of degree $\Delta$ and $\Delta-1$ are in $L_0$. Recall also that by Observation \ref{obs:multiplicity}, \whp for every $e\in E(M)$, $\mu(e)\leq 2$. 
	Assume that these events occur and furthermore
	that the assertions of Claims \ref{claim:L0Far} and \ref{claim0:MultiDisjoint} hold. These
	events occur w.h.p., so from now on we argue deterministically and show that $\chi'(M)=\Delta$.
	
	 We remove edges from the multigraph in the following way.
	First we look at all $e$ such that $\mu(e)= 2$, denote this set by $E_1$ and  let $V_1=\cup_{e\in E_1}e$. We remove from each such $e$ one multiplicity. By Claim~\ref{claim0:MultiDisjoint}, we removed a matching $F_1$. The remaining multigraph is a simple graph.  Now, we look at all of the remaining vertices of degree $\Delta$, denote this set by $V_0$ and note that $V_0\cap V_1=\emptyset$. 
	 By Claim \ref{claim0:MultiDisjoint}, for every vertex $v$ with $d(v)\geq 2$  there exists a neighbour $u$ such that  $u\notin V_1$. Therefore, for every vertex $v\in V_0$ we find such neighbour $u$ and remove the edge $vu$.  By Claim \ref{claim:L0Far} we removed a matching $F_2$. Note that $F=F_1\cup F_2$ is also a matching. The remaining graph is a simple graph with maximum degree  $\Delta-1$. Furthermore, all of the current vertices of degree $\Delta-1$ are in $L_0$ and therefore by Claim \ref{claim:L0Far} they form an independent set. By Theorem \ref{vizing}, we can colour the edges of the graph with $\Delta-1$ colours. Colouring the edges of $F$ with the additional colour gives us the desired colouring.
\end{proof}

\vspace{3mm}
\subsection{The case $n\log^9 n\leq m\leq n^2\log^2n$ }\label{sec:m<n2logn}
In this section we will prove Theorem \ref{GSforRM}   and Theorem \ref{thm:nEven} for the case $n\log^{8} n\leq m\leq n^2\log^2 n$.

\begin{proof}
	By Observation \ref{obs:multiplicity}, we have that \whp  $\mu(M)\leq \log^3 n$. Also, by Lemma \ref{lem0:DegDiff}, for, say, $f(n)=\log\log n$, we have that \whp $d_1-d_2\geq \frac {1}{f(n)}\sqrt{\frac {2m}{n \log n}}$. 

	By Lemma \ref{lem:Chi'Tash} applied with $k=d_1$, it is enough to show that \whp $d_1-d_2\geq \max \{2,\mu(M)\}$. Clearly, $\frac {1}{f(n)}\sqrt{\frac {2m}{n \log n}}\geq 2$, and thus \whp $d_1-d_2\geq 2$. 
	For $m\geq n\log^9n$ we have that \whp $d_1-d_2\geq \frac {1}{f(n)}\sqrt{\frac {2m}{n \log n}}>\log^3n> \mu(M)$.
\end{proof}

\vspace{3mm}
\subsection{The case $m=\omega(n^2\log n)$, and $n$ is even}\label{sec:nEven}

In this section we complete the proof of Theorem \ref{thm:nEven}, and the proof of Theorem \ref{GSforRM} for the even case.

\begin{proof}
	  By Corollary~\ref{cor:DifMu}, we have that \whp $\mu(M)-\mu_{min}=O\left(\sqrt{\frac {2m}{n^2}\log n}\right)$.  
	  Also, by Lemma~\ref{lem0:DegDiff} we have that \whp
          $d_1-d_2\geq \frac {1}{f(n)}\sqrt{\frac {2m}{n \log n}}$,
          where $f(n)\to \infty $ arbitrarily slowly. That is, \whp
          $ d_1-d_2\geq \mu(M)-\mu_{min}$, and thus by Corollary~\ref{cor:RemoveCopiesKn} we have $\chi'(M)=\Delta(M)$.  
\end{proof}

%
%
%
%
%

\vspace{3mm}
\subsection{ The case $m=\omega(n^2\log n)$, and $n$ is odd}\label{sec:OddN}

In this section we prove Theorem \ref{GSforRM} for the case that $n$ is odd and $m$ is large.
We then deduce Corollary \ref{cor:oddN}.

Before proving Theorem \ref{GSforRM} and Corollary \ref{cor:oddN}, we will present the following observation that indicates the essential difference between small values of $m$ and large values of $m$, in the case that $n$ is odd. (It is the same phenomenon, as discussed in Section~\ref{intro}, that makes $\lceil\rho(G)\rceil$ a natural lower bound for $\chi'(G)$ for any multigraph $G$.)

\begin{observation}\label{obs:mIsLarge}
	Let $\varepsilon>0$ and let $m\geq (1+\varepsilon)n^3\log n$ where $n$ is odd. Then \whp $\Delta<\chi'(M)=(1+o(1))\Delta$.
\end{observation}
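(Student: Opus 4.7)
The plan is to split the observation into the strict inequality $\Delta(M)<\chi'(M)$ and the asymptotic upper bound $\chi'(M)\leq(1+o(1))\Delta(M)$; combined with the trivial $\chi'(M)\geq\Delta(M)$ these give the statement. For the first inequality I would evaluate $\rho$ on $S=V(M)$: since $M$ has exactly $m$ edges and $n$ is odd, $\rho(V(M))=m/\lfloor n/2\rfloor=2m/(n-1)$ is deterministic. Applying Claim~\ref{lem0:MaxDeg2} with some fixed $\varepsilon'>0$ yields w.h.p.
$$\Delta(M)\leq\frac{2m}{n}+(1+\varepsilon')\sqrt{\frac{4m}{n}\Bigl(1-\frac{2}{n}\Bigr)\log n},$$
whence
$$\rho(V(M))-\Delta(M)\geq\frac{2m}{n(n-1)}-2(1+\varepsilon')\sqrt{\frac{m\log n}{n}\Bigl(1-\frac{2}{n}\Bigr)}.$$
A direct squaring computation shows this gap is positive exactly when $m>(1+\varepsilon')^2(1+o(1))n^3\log n$. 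Choosing $\varepsilon'$ small enough that $(1+\varepsilon')^2<1+\varepsilon$ and using the hypothesis $m\geq(1+\varepsilon)n^3\log n$, the gap is in fact $\Omega(n\log n)$, so $\lceil\rho(V(M))\rceil>\Delta(M)$ and therefore $\chi'(M)\geq\lceil\rho(M)\rceil\geq\lceil\rho(V(M))\rceil>\Delta(M)$.

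For the asymptotic upper bound I would first bound $\rho(M)$. For every $S\subseteq V(M)$, the inequalities $e(M[S])\leq\binom{|S|}{2}\mu(M)$ and $\lfloor|S|/2\rfloor\geq(|S|-1)/2$ give $\rho(S)\leq|S|\mu(M)\leq n\mu(M)$. By Observation~\ref{obs:multiplicity}(3), w.h.p. $\mu(M)=\frac{2m}{n(n-1)}\pm 4\sqrt{\frac{2m\log n}{n(n-1)}}$, and since $m=\omega(n^2\log n)$ the main term dominates, giving $n\mu(M)=(1+o(1))\frac{2m}{n-1}$. Combined with the trivial bound $\Delta(M)\geq 2m/n$ (the average degree), this yields $\rho(M)\leq(1+o(1))\Delta(M)$. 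Now the Chen--Gao--Kim--Postle--Shan bound~\cite{CGKPS},
$$\chi'(M)\leq\max\bigl\{\Delta(M)+\sqrt[3]{\Delta(M)/2},\ \lceil\rho(M)\rceil\bigr\},$$
together with $\Delta(M)=\Theta(m/n)\to\infty$, gives $\chi'(M)\leq(1+o(1))\Delta(M)$.

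The main obstacle is the first step: one must calibrate the constant $\varepsilon'$ in Claim~\ref{lem0:MaxDeg2} so that the fluctuation window for $\Delta(M)$ is strictly narrower than the deterministic surplus $\rho(V(M))-2m/n=2m/(n(n-1))$. These two quantities are of the same order exactly at $m\asymp n^3\log n$, which is precisely where the threshold in the hypothesis appears; the rest of the argument is routine computation.
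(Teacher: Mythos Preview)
Your argument is correct. The first half (the strict inequality $\Delta<\chi'$) is essentially the paper's argument: the paper phrases it as ``every matching has size at most $k=(n-1)/2$, hence $\chi'\geq m/k$'', which is exactly $\chi'\geq\rho(V(M))$, and then bounds $\Delta$ from above via Claim~\ref{lem0:MaxDeg2} just as you do.

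Your upper bound, however, takes a different route. The paper does not invoke the Chen--Gao--Kim--Postle--Shan theorem; instead it uses its own degree-gap machinery: by Corollary~\ref{cor:DifMu} and Lemma~\ref{lem0:DegDiff} one has w.h.p.\ $d_1-d_2\geq\mu(M)-\mu_{\min}$, so Corollary~\ref{cor:RemoveCopiesKn} applies and gives $\chi'(M)\leq\Delta(M)+\mu_{\min}=(1+o(1))\Delta(M)$. Your approach is shorter and sidesteps the paper's Tashkinov-tree machinery entirely, at the price of quoting a deep external result; the paper's route is self-contained within the tools it has already developed, and in fact reuses verbatim the argument that drives the even-$n$ case in Section~\ref{sec:nEven}.
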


\begin{proof}
	Write $n=2k+1$. The maximal matching in $M$ is of size at most $k$, and therefore one needs at least $\frac mk$ colours to colour the edges of $M$. Thus $\frac mk \leq \chi'(M)$. Furthermore, by Claim~\ref{lem0:MaxDeg2} we have that \whp $\Delta\leq \frac {2m}n+\sqrt{(1+\varepsilon)\frac {2m}n\log (n^2)}$. Together we have that \whp $\chi'(M)-\Delta\geq \frac mk- \frac {2m}n-\sqrt{(1+\varepsilon)\frac {2m}n\log (n^2)}$. As $\sqrt m\geq n^{3/2}\sqrt {(1+\varepsilon)\log n}$, we get $\frac mk- \frac {2m}n-\sqrt{(1+\varepsilon)\frac {2m}n\log (n^2)}>0$, that is, \whp $\Delta<\chi'(M)$.
	
	For the upper bound,  Note that
	by Corollary~\ref{cor:DifMu}, we have that \whp $\mu(M)-\mu_{min}=O(\sqrt{\frac {2m}{n^2}\log n})$.  
	Also, by Lemma~\ref{lem0:DegDiff} we have that \whp
        $d_1-d_2\geq \frac {1}{f(n)}\sqrt{\frac {2m}{n \log n}}$. That
        is, \whp $d_1-d_2\geq \mu(M)-\mu_{min}$, and by Corollary~\ref{cor:RemoveCopiesKn} \whp \[\chi'(M)\leq\Delta(M)+\mu_{min}\leq (1+o(1))\Delta(M).\qedhere \]  
\end{proof}

For proving the theorems, we first need the following claim.
\begin{claim}\label{claim:rho}
	Let $m= \omega (n^2\log n)$, assume that $n$ is odd, and let $M\sim \mnm$. Then \whp $\rho(M)=\rho([n])=\frac m{\lfloor \frac {n}2 \rfloor}$.
\end{claim}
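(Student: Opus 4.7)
The plan is to establish, with high probability, that every $S \subsetneq [n]$ of size $s$ satisfies $e_M(S) \leq b_s := \frac{2m\lfloor s/2\rfloor}{n-1}$, which is exactly the inequality $\rho(S) \leq \rho([n]) = 2m/(n-1)$ (the latter equality using that $n$ is odd and $e_M([n]) = m$). The overall strategy is a Chernoff tail bound on $e_M(S) \sim \Bin(m, p_s)$ with $p_s = \binom{s}{2}/\binom{n}{2}$, followed by a union bound over all $S$, organized by the size $s$.

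First I would carry out the expectation-level computation. Writing $\mu_s := m p_s = \frac{m s(s-1)}{n(n-1)}$, a direct calculation yields
\[
b_s - \mu_s \;=\; \begin{cases} \frac{m(s-1)(n-s)}{n(n-1)} & \text{if } s \text{ is odd,} \\ \frac{m s(n-s+1)}{n(n-1)} & \text{if } s \text{ is even,} \end{cases}
\]
which is of order $\Omega(m\,s(n-s)/n^2)$, strictly positive for $s<n$ and zero at $s=n$. Thus in expectation $[n]$ is already the unique $\rho$-maximizer, and the remaining task is just concentration.

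Next I would apply Chernoff and union-bound in two regimes. For $s \leq n/2$, the gap $t := b_s - \mu_s$ is at least $\mu_s$, so Chernoff gives $\Pr[e_M(S) > b_s] \leq \exp(-\Omega(t)) = \exp(-\Omega(m\,s(n-s)/n^2))$; combined with $\binom{n}{s} \leq (en/s)^s$ and $m = \omega(n^2 \log n)$, summing over such $S$ is $o(1)$ with considerable room. For $s > n/2$, set $k := n-s$; now $t \leq \mu_s$, so Chernoff is in its Gaussian regime and gives $\Pr[e_M(S) > b_s] \leq \exp(-\Omega(m k^2/n^2))$, while the union bound over subsets of size $s$ costs at most $\binom{n}{k} \leq (en/k)^k$. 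The case $k=1$ is vacuous since $b_{n-1} = m$ holds automatically, so the worst case is small $k \geq 2$, where one needs $m k^2/n^2 \gg k \log n$, which is exactly what the hypothesis $m = \omega(n^2 \log n)$ delivers.

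The main obstacle, as this analysis shows, is the regime of $s$ very close to $n$: there the gap $b_s - \mu_s$ is only $\Theta(mk/n)$ while the mean $\mu_s$ is $\Theta(m)$, which forces the Chernoff bound into its weaker Gaussian regime and makes the exponent smallest. This is precisely where the assumption $m = \omega(n^2 \log n)$ is used tightly; every other range of $s$ clears the union bound with substantial slack. Summing the resulting failure probabilities over $s = 1, \ldots, n-1$ concludes that \whp\ $\rho(M) = \rho([n]) = m/\lfloor n/2 \rfloor$.
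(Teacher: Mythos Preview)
Your proof is correct, but it takes a genuinely different route from the paper. The paper first reduces to odd $S$, then argues \emph{deterministically} once a single high-probability event is in place: using Observation~\ref{obs:multiplicity} to get $\mu(M)/\mu_{\min}(M)=1+o(1)$, it bounds $e(S)\le\binom{|S|}{2}\mu$ and $e([n])\ge e(S)+\bigl(\binom{n-|S|}{2}+|S|(n-|S|)\bigr)\mu_{\min}$, and a short calculation reduces $\rho(S)<\rho([n])$ to the inequality $\mu/\mu_{\min}<1+\tfrac{n-1}{|S|}$, which holds for every proper $S$. Your approach instead applies Chernoff directly to each $e_M(S)\sim\Bin(m,p_s)$ and union-bounds over all $S$, with a case split on $|S|$ versus $n/2$. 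Your argument is more self-contained (it does not go through the edge-multiplicity concentration statement) and makes explicit where the hypothesis $m=\omega(n^2\log n)$ bites, namely at $k=n-s=2$; the paper's argument is shorter because it recycles the already-established multiplicity bounds and avoids the union bound over $2^n$ sets altogether.
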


\begin{proof}
	First, note that since $n$ is odd then $\rho(M)$ is obtained on an odd set. Indeed, assume that $S\subset [n]$ and $|S|$ is even. Take $S'=S\cup \{v\}$ where $v\notin S$ (there is always such vertex since $S$ is even and $n$ is odd). Write $|S|=2k$, and so $|S'|=2k+1$. Then $\lfloor \frac {|S|}2 \rfloor=\lfloor \frac {|S'|}2 \rfloor$ and $e(S)\leq e(S')$. Thus $\rho(S)\leq \rho(S')$.
	
	Next, let $S\subsetneq [n]$ be an odd set. Write $n=2r+1$ and $|S|=2k+1$, then $2k+1<2r$. Note that
	
	\begin{align*}
	\rho([n]) &= \frac mr = \frac {e(S)+e([n]\setminus S)+e(S,[n]\setminus S)}r \\
	& \geq  \frac {e(S)}r +\frac {\binom {2r-2k}2 \cdot \mu_{\min}+ (2k+1)(2r-2k)\cdot \mu_{\min} }r \\ 
	& = \frac {e(S)}r +\frac { (r-k)(2r-2k-1) \cdot \mu_{\min}+ 2(2k+1)(r-k)\cdot \mu_{\min} }r \\
	& = \frac {e(S)}r +\frac { (2r-2k-1 + 2(2k+1))(r-k)\cdot \mu_{\min} }r \\
	& = \frac {e(S)}r +\frac { (2r+2k+1)(r-k)\cdot \mu_{\min} }r.  
	\end{align*}
	
	Thus it is enough to show that 
	$$\rho(S)=\frac {e(S)}k < \frac {e(S)}r +\frac { (2r+2k+1)(r-k)\cdot \mu_{\min} }r,$$
	that is, to show that 
	$ {e(S)}r <  {e(S)}k + { k(2r+2k+1)(r-k)\cdot \mu_{\min} }$, or simply $ {e(S)} <    { k(2r+2k+1)\cdot \mu_{\min} }$.
	
	Note that $e(S)\leq \binom{2k+1}2 \cdot \mu = (2k+1)k \cdot \mu$, where $\mu:=\mu(M)$. Therefore, we wish to show that $(2k+1)k \cdot \mu < k(2r+2k+1)\cdot \mu_{\min}$, that is, $\frac {\mu } {\mu_{\min} } < \frac {2r+2k+1}{2k+1} = 1+ \frac {2r}{2k+1}$. By Observation \ref{obs:multiplicity} and the fact that $m=\omega (n^2 \log n)$ we have that \whp $\frac {\mu} {\mu_{\min} }=1+o(1)$; on the other hand, we assumed that $2r>2k+1$, and thus the claim follows.
\end{proof}

We now ready to prove Theorem \ref{GSforRM} for the case that $n$ is odd and $m$ is large. In this proof, we will use Theorem \ref{nnlapp}.

\begin{proof}
	Write $\chi'(M)=k+1$ and let  $d_1\geq d_2\geq\ldots\geq d_n$ be the 
	degree sequence of $M$. By Theorem~\ref{nnlapp}, at least one of the items $(a),(b),(c),(d)$ holds. By Lemma \ref{lem0:DegDiff} and the fact that $m=\omega (n^2\log n)$, \whp $d_1-d_2=\omega(1)$. Therefore, if Item $(b)$ holds, we have that \whp $\chi'(M) \leq d_2+3=d_1-\omega(1)$, which is impossible since $d_1\leq \chi'(M)$ always. By Observation \ref{obs:multiplicity}, using $m=\omega(n^2\log n)$, we have that \whp $\mu_{\min}=(1-o(1))\mu$, therefore Item $(c)$ is also impossible.
	
	We are left with Items $(a)$ and $(d)$. If Item $(a)$ occurs, then $\chi'(M)\leq d_1$, which gives $\chi'(M)=d_1$. By Claim \ref{claim:rho}, \whp $\rho(M)=\rho([n])=\frac {e(M)}{\lfloor \frac n2 \rfloor}$. Since $n$ is odd we have that  $\lfloor \frac n2 \rfloor=\frac {n-1}2$, and thus if Item $(d)$ occurs, then \whp $\lceil\rho(M)\rceil=\lceil\rho([n])\rceil\geq k+1=\chi'(M)$. That is,  \whp $\lceil\rho(M)\rceil= \chi'(M)$ in the case of Item $(d)$.
\end{proof}

We conclude this section with the proof of Corollary~\ref{cor:oddN}.

\begin{proof}[Proof of Corollary  \ref{cor:oddN}]
	For the case $m\leq n^2\log^2 n$ we already saw in the previous subsections that \whp $\chi'(M)=\Delta(M)$, so we may assume that $m=\omega(n^2\log n)$.
	
	By Theorem \ref{GSforRM} we know that \whp $\chi'(M)=\max \{\Delta(M),\lceil\rho(M)\rceil \}$. By Claim \ref{claim:rho} we have \whp $\rho(M)=\frac {2m}{ {n-1}}$  and by (the proof of) Lemma \ref{lem0:MaxDeg2} we have that \whp $\Delta(M)\in \frac {2m}n+\sqrt {(1\pm \frac \varepsilon2)\frac {4m\log n}n}$. 
	
	Assume that $m\leq (1-\varepsilon)n^3\log n$, then \whp we have
	\begin{align*}
	\Delta(M) &\geq \frac {2m}n+\sqrt {\left(1- \frac \varepsilon2\right)\frac {4m\log n}n}\\
	&= \frac {2m}{n-1}\left( 1-\frac {1}n + \sqrt {\left(1-\frac \varepsilon2\right)\frac {(n-1)^2\log n}{nm} } \right)\\
	& \geq \frac {2m}{n-1}\left( 1-\frac {1}n + \sqrt {\left(1-\frac \varepsilon2\right)\frac {(n-1)^2\log n}{n(1-\varepsilon)n^3\log n } } \right)\\
	& = \frac {2m}{n-1}\left( 1-\frac {1}n + \frac {n-1}{n^2}\sqrt{1+\frac \varepsilon{2(1-\varepsilon)}} \right)>  \frac {2m}{n-1}\left(1+\frac {\Theta_\varepsilon(1)} n\right)\\
	&> \frac {2m}{n-1}+1\geq \lceil\rho(M)\rceil,
	\end{align*}
	where 
	$m=\omega(n^2\log n)$.
	
	Assume now that $m\geq (1+\varepsilon)n^3\log n$, then by Observation~\ref{obs:mIsLarge} we have that \whp $\Delta(M)<\chi'(M)$, and since \whp $\chi'(M)=\max \{\Delta(M),\lceil\rho(M)\rceil \}$ we are done.
\end{proof}
%

\vspace{3mm}
\subsection{Algorithmic issues}\label{sec:alg}
We conclude this section with a discussion about algorithmic aspects
of our results and proofs. We begin by noting that Vizing's theorem (Theorem~\ref{vizing}) is
algorithmic, and could be stated as follows.  

\begin{theorem}[Vizing \cite{vizing64}]\label{vizingAlg}
	There exists a polynomial-time algorithm that takes as input any simple graph
        $G$ with the property that every cycle of $G$ contains  a vertex of degree less than
        $\Delta(G)$, and returns an edge-colouring of $G$ with
        $\Delta(G)$ colours.
\end{theorem}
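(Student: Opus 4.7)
The plan is to show that the standard inductive proof of Theorem~\ref{vizing} is constructive: every step in the induction translates into a polynomial-time operation. I will argue by induction on $|E(G)|$, with the trivial cases $\Delta(G)\leq 1$ as the base case. Throughout, let $H=H(G)$ denote the subgraph of $G$ induced by the vertices of degree $\Delta:=\Delta(G)$. The cycle hypothesis on $G$ implies that $H$ is a forest, because any cycle contained in $H$ would be a cycle of $G$ with every vertex of degree $\Delta$. Both $H$ and a leaf or isolated vertex $v$ of $H$ can be computed in linear time.

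Since $d_H(v)\leq 1$ while $d_G(v)=\Delta$, the vertex $v$ has at least $\Delta-1\geq 1$ neighbours of degree strictly less than $\Delta$; pick any such neighbour $u$ and let $e=uv$. The algorithm removes $e$ and recurses on $G':=G-e$: if $\Delta(G')=\Delta$ then $V(H(G'))\subseteq V(H)\setminus\{v\}$, so $H(G')$ is a subforest of $H$ and $G'$ continues to satisfy the cycle hypothesis; if instead $\Delta(G')<\Delta$, one invokes the standard polynomial-time algorithm implementing Vizing's classical theorem, which yields a proper edge-colouring of $G'$ using at most $\Delta(G')+1\leq\Delta$ colours. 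Either way, we obtain a proper edge-colouring $\phi$ of $G'$ using at most $\Delta$ colours; by padding the palette with unused labels we may take $\phi\colon E(G')\to[\Delta]$.

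It then remains to extend $\phi$ across $e=uv$. Under $\phi$, the vertex $v$ is missing exactly one colour $\alpha$ and $u$ is missing at least two colours. If $\alpha$ is missing at $u$, colour $e$ with $\alpha$ and return. Otherwise, run the Vizing fan procedure at $v$ starting from $u_0:=u$: iteratively build a sequence $u_0,u_1,\ldots$ of distinct neighbours of $v$ together with colours $\beta_0,\beta_1,\ldots$ where $\beta_{i-1}$ is missing at $u_{i-1}$ and $\phi(vu_i)=\beta_{i-1}$. The standard analysis shows that within $O(\Delta)$ steps the fan reaches one of: (a) some $\beta_i=\alpha$, in which case a cyclic colour shift along the edges $vu_1,\ldots,vu_i$ frees a colour for $e$; (b) a repetition $\beta_i=\beta_j$ with $j<i-1$, which an $(\alpha,\beta_j)$-Kempe chain swap reduces to case (a); or (c) the fan reaches a vertex $u_k$ with no missing colour, which necessarily has degree $\Delta$ in $G$, so $u_k\in V(H)$.

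The main obstacle is outcome (c), which does not arise in the classical proof of Vizing's theorem with $\Delta+1$ colours (where every vertex has a missing colour). This is exactly where the cycle hypothesis enters decisively: since $v$ is a leaf of the forest $H$, it has at most one neighbour in $V(H)$, so outcome (c) can occur at most once throughout the fan construction, and an $(\alpha,\beta_{k-1})$-Kempe chain swap around this unique obstruction reduces the situation back to case (a) or (b). Each fan construction, colour shift, and Kempe chain computation runs in $O(n+|E(G)|)$ time, and the recursion strips off one edge per level, so the overall running time is $O(|E(G)|^2)$, which is polynomial in the size of the input.
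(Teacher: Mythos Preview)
The paper does not prove this statement; it simply attributes the algorithmic form of Theorem~\ref{vizing} to Vizing and uses it as a black box in Section~\ref{sec:alg}. So there is no paper proof to compare against, but your argument contains a genuine gap.

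Your overall strategy is the right one: $H$ is a forest, pick a leaf $v$, delete an incident edge, recurse, and extend by a Vizing fan at $v$. The error is in \emph{which} incident edge you delete. You remove $e=uv$ with $d_G(u)<\Delta$; but when $v$ is a genuine leaf of $H$ (not an isolated vertex), its unique $H$-neighbour $w$ still has degree $\Delta$ in $G-e$ and hence no missing colour. Your outcome~(c) can therefore occur, and the single Kempe swap you propose does not dispose of it. Concretely, suppose the $(\alpha,\beta_{\ell-1})$-chain containing $v$ (it starts along $vw$, since $\alpha$ is missing at $v$) has $u_{\ell-1}$ as its other endpoint. After swapping on this chain, $v$ misses $\beta_{\ell-1}$, $u_{\ell-1}$ misses $\alpha$, and $vw$ is recoloured $\alpha$. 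Rebuilding the fan from $u_0$, the first $\ell-1$ steps are unchanged (all $\beta_i$ with $i<\ell-1$ are distinct from $\alpha$ and $\beta_{\ell-1}$), you reach $u_{\ell-1}$ whose missing colour is now $\alpha$, the $\alpha$-edge at $v$ is $vw$, and you are right back at $w$ in case~(c). No single $(\alpha,\beta_{\ell-1})$-swap escapes this loop.

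The standard remedy is to delete the other edge: take $e=vw$ when $v$ is a leaf of $H$ (and any incident edge when $v$ is isolated in $H$). Then in $G-e$ every neighbour of $v$---including $w$, now of degree $\Delta-1$---has at least one missing colour, so outcome~(c) cannot arise and the ordinary fan argument with only cases~(a) and~(b) goes through. The recursion is unaffected: $H(G-e)$ is still a subforest of $H$, and if $\Delta(G-e)<\Delta$ you fall back on the algorithmic Vizing bound $\Delta(G-e)+1\le\Delta$ exactly as you wrote. With this one change of edge, your proof is correct and runs in polynomial time as claimed.
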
 

Using this, we get that the proof of Theorem~\ref{thm:Gnp} is algorithmic, as well as the proof of Theorem~\ref{GSforRM} for the case $m\leq n\log^{100}n$.

We will next (briefly) explain how to show the existence of an
efficient (polynomial time) algorithm that \whp finds an optimal
edge-colouring for other values of $m:=m(n)$. In
Remark~\ref{re:TashAlg} we mentioned
that the proof of Theorem~\ref{TashT} is algorithmic. For convenience
let us say
$(G,k,e_0,\phi,T)$ is a {\it legal} 5-tuple if it satisfies the
conditions of Theorem~\ref{TashT}, namely:
\begin{itemize}
\item $k$ is a positive integer,
\item $G$ is a multigraph with $\Delta(G)\leq k$,
\item $e_0$ is an edge of $G$ such that the endpoints $x$ and
    $y$ of $e_0$ satisfy $d(x)+d(y)\leq2k-2$,
\item    $d(v)\leq k-1$ for each $v\in V(G)\setminus\{x,y\}$,
\item $\phi$ is a $k$-edge-colouring of $G-e_0$,
\item $T$ is a $\phi$-Tashkinov tree starting with $e_0$.
\end{itemize}
Then Theorem~\ref{TashT} could be restated as follows.

\begin{theorem}\label{TashAlg}
There exists an algorithm $Alg1$ that takes as input any legal 5-tuple
$(G,k,e_0,\phi,T)$ with the property that $T$ is not $\phi$-elementary,
and returns in polynomial time a $k$-edge-colouring $\psi$ of $G$.
\end{theorem}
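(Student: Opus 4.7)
The plan is to show that the proof of Theorem~\ref{TashT} is inherently constructive and each of its steps can be carried out in polynomial time. Recall that the standard proof (from~\cite{T}, as presented in~\cite{S}) establishes that if $(\phi, T)$ is a max-pair for $G$ then $V(T)$ is $\phi$-elementary; contrapositively, if we are handed a legal 5-tuple $(G, k, e_0, \phi, T)$ with $V(T)$ not $\phi$-elementary, then $(\phi, T)$ is not a max-pair, and the proof implicitly describes an explicit sequence of Kempe chain (alternating-path) swaps that strictly improves the pair. Iterating these improvements polynomially many times will yield a $k$-edge-colouring $\psi$ of all of $G$.

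First I would identify, in polynomial time, a pair of distinct vertices $u, v \in V(T)$ together with a colour $\alpha$ missing at both under $\phi$; such a triple exists by the non-elementarity hypothesis, and can be found by scanning the missing-colour sets $\bar{\phi}(w_j)$ in time $O(|V(T)| \cdot k)$. The standard argument then distinguishes cases according to the structure of $T$ and the positions of $u, v$ within it, and in each case prescribes a short sequence of two-colour alternating-path swaps involving $\alpha$ and a suitable colour $\beta \in \cM_{T,\phi}$. Each alternating path can be found by a breadth-first traversal of the relevant bichromatic subgraph in time $O(|E(G)|)$, and swapping colours on it takes the same time. After at most $|V(T)|$ such swaps, either the edge $e_0$ itself becomes colourable (yielding $\psi$ directly, and we halt), or one arrives at a new legal 5-tuple $(G, k, e_0, \phi', T')$ on which the procedure is repeated.

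The main obstacle is to bound the number of outer iterations polynomially. This is handled by introducing a potential function, for example the lexicographic pair $(|\phi|, |V(T)|)$ on the current state, which the algorithm aims to maximise: each improvement step either strictly increases $|\phi|$ (after which $T$ is rebuilt from $e_0$ via a greedy Tashkinov-tree construction for the updated colouring), or keeps $|\phi|$ fixed while strictly increasing $|V(T)|$ without destroying the Tashkinov-tree property. Both coordinates are bounded above by $|E(G)| + |V(G)|$, so only polynomially many iterations occur before $|\phi| = |E(G)|$, at which point $\psi := \phi$ is a $k$-edge-colouring of all of $G$. Since each iteration runs in polynomial time, the entire algorithm $Alg1$ runs in polynomial time and returns the required colouring.
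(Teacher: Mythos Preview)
Your proposal is correct and takes essentially the same approach as the paper: the paper's own proof of this theorem is simply the one-sentence remark that ``the colouring $\psi$ is obtained from $\phi$ by a series of alternating path switches (see e.g.\ the proof of Theorem~5.1 in~\cite{S} and associated discussion of its algorithmic aspects),'' and you have fleshed out exactly that idea. One small point of imprecision: since in a legal 5-tuple $\phi$ already colours all of $G-e_0$, your first potential coordinate $|\phi|$ can increase at most once (to $|E(G)|$, at which point you halt), so the termination argument really lives entirely in the second coordinate $|V(T)|$---and the proof in~\cite{S} does guarantee that each round of swaps either colours $e_0$ or produces a strictly larger $\phi'$-Tashkinov tree, which is what you need.
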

The colouring $\psi$ is obtained from $\phi$ by a series of
alternating path switches (see e.g. the proof of Theorem~5.1 in
~\cite{S} and
associated discussion of its algorithmic aspects). 

Using Alg1, we obtain an algorithm AlgC that finds either an
edge-colouring or an elementary Tashkinov tree. Here we say a multigraph $G$ is
$(k,t)$-{\it bounded} if $\Delta(G)\leq k$ and $d(v)\leq k-t$ for all
except possibly one vertex of $G$.

\begin{theorem}\label{ColAlg}
There exists an algorithm $AlgC$ that takes as input any positive
integer $k\geq 2$ and any $(k,2)$-bounded multigraph $G$, and returns
in polynomial time either
\begin{enumerate}
\item a $k$-edge colouring of $G$, or
\item a legal 5-tuple
$(G^*,k,e_0,\phi,T)$ where $G^*$ is a
  subgraph of $G$ and $T$ is a maximal
$\phi$-elementary Tashkinov tree.
\end{enumerate}
Moreover, if the subgraph $G^*$ in output (2) has a vertex $w$ with
$d(w)\geq k-1$ (for example $G^*=G$ is possible), then the edge $e_0$ is incident to $w$.
\end{theorem}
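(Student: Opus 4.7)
The plan is to process the edges of $G$ one at a time in a carefully chosen order, maintaining a subgraph $G^* \subseteq G$ together with a $k$-edge-colouring $\phi$ of $G^*$. Each new edge is added first to $G^*$ (remaining uncoloured, playing the role of $e_0$), and we then either extend $\phi$ via $Alg1$ or terminate with output~(2). Since $G$ is $(k,2)$-bounded, at most one vertex $w$ has $d_G(w) > k-2$; we order $E(G)$ so that edges not incident to $w$ come first and those incident to $w$ come last (if no such $w$ exists, any ordering will do). Initially $G^* := \emptyset$ and $\phi$ is the empty colouring.

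For the current edge $e = xy$, set $e_0 := e$, add $e_0$ to $G^*$ (so that $\phi$ is still a $k$-edge-colouring of $G^* - e_0$), and grow a $\phi$-Tashkinov tree $T$ rooted at $e_0$ greedily: while there exists an edge $f \notin T$ of $G^*$ joining a vertex of $V(T)$ to a vertex outside $V(T)$ such that $\phi(f)$ is missing at some vertex of the current $T$, append $f$. After each extension we check whether $V(T)$ is $\phi$-elementary; the first time two vertices of $T$ sharing a missing colour are found, invoke $Alg1$ on $(G^*, k, e_0, \phi, T)$, set $\phi$ to the $k$-edge-colouring it returns, and move on to the next edge. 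If instead $T$ reaches maximality while still being $\phi$-elementary, halt and return $(G^*, k, e_0, \phi, T)$ as output~(2).

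The heart of the argument is that $(G^*, k, e_0, \phi, T)$ is a legal 5-tuple at every invocation of $Alg1$ and at termination, and that the moreover clause holds. The ordering forces this: at every stage every $v \neq w$ has $d_{G^*}(v) \leq d_G(v) \leq k-2$. If the current $e_0$ is not incident to $w$, then also $d_{G^*}(w) = 0$, so both endpoints of $e_0$ have $G^*$-degree at most $k-2$, giving $d_{G^*}(x) + d_{G^*}(y) \leq 2k - 4 \leq 2k - 2$ and $d_{G^*}(v) \leq k - 1$ for all other $v$; in particular no vertex of $G^*$ has degree $\geq k - 1$, so the moreover clause is vacuous. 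If $e_0$ is incident to $w$, take $x := w$; then $d_{G^*}(x) \leq \Delta(G) \leq k$ and $d_{G^*}(y) \leq k - 2$, hence $d_{G^*}(x) + d_{G^*}(y) \leq 2k - 2$, while every $v \notin \{x, y\}$ satisfies $d_{G^*}(v) \leq k - 2 \leq k - 1$. Since $w$ is the only vertex whose $G^*$-degree can ever reach $k - 1$, and this can only occur once we begin processing edges incident to $w$, the edge $e_0$ is incident to $w$ whenever the hypothesis of the moreover clause applies.

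The running time is polynomial: $T$ has at most $|V(G)|$ vertices, each greedy extension and elementarity check is polynomial in the size of $G$, $Alg1$ runs in polynomial time, and each edge of $G$ triggers at most one $Alg1$ call before being successfully coloured. The main obstacle I anticipate is precisely the degree bookkeeping above: with an arbitrary edge ordering, the sum $d_{G^*}(x) + d_{G^*}(y)$ may exceed $2k - 2$ at intermediate steps, violating the legality requirement; the $(k,2)$-boundedness hypothesis supplies exactly the slack needed to defer all edges incident to the single possibly-heavy vertex to the very end, keeping the legality conditions satisfied throughout.
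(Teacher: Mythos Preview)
Your proposal is correct and follows essentially the same approach as the paper: process edges one by one in an order that defers those incident to the (at most one) heavy vertex $w$ to the end, maintain a coloured subgraph $G^*$, and at each step grow a Tashkinov tree from the new uncoloured edge, invoking $Alg1$ when it fails to be elementary and halting with output~(2) when it is maximal and elementary. The only cosmetic differences are that the paper first tries a greedy extension before resorting to Tashkinov trees, and grows $T$ to maximality before testing elementarity rather than checking after each extension; neither affects correctness or the polynomial bound.
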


\begin{proof}
AlgC proceeds as follows. Given $G$ and $k$, list the edges of $G$ in an
arbitrary order, except that if $G$ has a vertex $w$ with $d(w)\geq
k-1$ then end the
list with the set of edges incident to $w$. Colour the
edges greedily in order with $k$ colours until they are all coloured (and so we are done) or
there is no available colour for the current edge. In this case call
the current edge $e_0$, remove all other uncoloured edges and call the
resulting subgraph $G'$. With the
current partial colouring $\phi$ of $G'-e_0$, grow
a maximal Tashkinov tree $T$ from $e_0$ (this is easily seen to be a
polynomial procedure). If $V(T)$ is not 
$\phi$-elementary then $(G',k,e_0,\phi,T)$ is a legal
5-tuple to which Theorem~\ref{TashAlg} applies. Apply Alg1 to obtain
a $k$-edge-colouring of $G'$. Restore the remaining uncoloured edges
in the order and
repeat this last step with the next edge in the order until either all of
$G$ is coloured, or we stop at a
partial colouring $\phi$ of some subgraph $G^*$ of $G$, and a
maximal $\phi$-elementary Tashkinov tree $T$ in $G^*$. Then
$(G^*,k,e_0,\phi,T)$ is a legal 5-tuple as in output (2). Moreover if
$G^*$ contains a vertex of degree at least $k-1$ then it is $w$, and
by definition of the edge order $e_0$ is incident to $w$.
\end{proof}

\medskip

For the case that $n\log^9n\leq m\leq n^2\log^2n$, and the case that
$m=\omega(n^2\log n)$ where $n$ is
 even, we may use the following algorithmic version of
 Lemma~\ref{lem:Chi'Tash}. 

\begin{lemma}\label{lem:Chi'TashAlg}
The algorithm AlgC returns a $k$-edge-colouring of $G$, for any
$(k,t)$-bounded multigraph $G$ where $t\geq\max\{2,\mu(G)\}$.
\end{lemma}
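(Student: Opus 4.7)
The plan is to show that, under the hypotheses on $G$, the second output of $AlgC$ cannot occur, so Theorem~\ref{ColAlg} forces $AlgC$ to return a $k$-edge-colouring. I will argue by contradiction: suppose $AlgC$ returns output (2), namely a legal 5-tuple $(G^*, k, e_0, \phi, T)$ with $G^*\subseteq G$ and $T$ a maximal $\phi$-elementary Tashkinov tree starting at $e_0$. My goal is to reproduce the counting argument from the proof of Lemma~\ref{lem:Chi'Tash}, which applies since the conclusion of Theorem~\ref{TashT} (elementarity of $T$) is now given to us directly by $AlgC$.

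First I would note that $G^*\subseteq G$ implies $G^*$ is also $(k,t)$-bounded: at most one vertex of $G^*$ has degree exceeding $k-t$, and this vertex (call it $w_0$, if it exists) is the exceptional vertex of $G$. Let $x,y$ be the endpoints of $e_0$. If $G^*$ contains a vertex of degree at least $k-1$, it must be $w_0$, and the \emph{moreover} clause of Theorem~\ref{ColAlg} guarantees $e_0$ is incident to $w_0$; set $x:=w_0$. Otherwise every vertex of $G^*$ has degree at most $k-2$ and we take $x$ to be any endpoint of $e_0$. In either case $d_{G^*}(v)\leq k-t$ for every $v\in V(G^*)\setminus\{x\}$, and in particular for $y$.

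Now I would mimic the counting step. Since $\phi$ is a $k$-edge-colouring of $G^*-e_0$, each vertex $v\in V(T)\setminus\{x,y\}$ has at least $k-d_{G^*}(v)\geq t$ missing colours under $\phi$, while $y$ has at least $k-(d_{G^*}(y)-1)\geq t+1$ missing colours because it is incident to the uncoloured $e_0$. Elementarity of $T$ makes these all distinct, giving at least $t(|V(T)|-1)+1$ distinct missing colours at vertices of $V(T)\setminus\{x\}$. By maximality of $T$, no edge coloured with a missing colour leaves $V(T)$, and elementarity forces each such colour to appear on an edge of $G^*[V(T)]$ incident to $x$. Hence
\[
t(|V(T)|-1)+1 \;\leq\; d_{V(T)}(x) \;\leq\; \mu(G^*)(|V(T)|-1) \;\leq\; \mu(G)(|V(T)|-1) \;\leq\; t(|V(T)|-1),
\]
a contradiction. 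Therefore output (2) never occurs and $AlgC$ returns a $k$-edge-colouring, as claimed.

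The argument is essentially a reorganisation of the proof of Lemma~\ref{lem:Chi'Tash}, so no new combinatorial obstacle arises. The one point that requires care is the placement of $e_0$: the counting works only if, whenever a vertex of degree $\geq k-1$ survives in $G^*$, the edge $e_0$ is incident to it. This is precisely what the \emph{moreover} clause of Theorem~\ref{ColAlg} guarantees, and it is the reason we insisted in $AlgC$ on ordering the edges so that those incident to the exceptional vertex $w$ are processed last.
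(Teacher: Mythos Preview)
Your proposal is correct and follows essentially the same approach as the paper: both argue by contradiction that output~(2) of $AlgC$ is impossible by replaying the counting argument from the proof of Lemma~\ref{lem:Chi'Tash} on the subgraph $G^*$, using the \emph{moreover} clause of Theorem~\ref{ColAlg} to ensure $e_0$ is incident to the high-degree vertex. Your write-up is in fact more detailed than the paper's, which simply says ``as in the proof of Lemma~\ref{lem:Chi'Tash} applied to $G^*$''.
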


\begin{proof}
Suppose on the contrary that AlgC does not return a
$k$-edge-colouring when applied to $G$. Then instead it returns a legal 5-tuple
$(G^*,k,e_0,\phi,T)$ where $G^*$ is a
subgraph of $G$ and $T$ is a maximal
$\phi$-elementary Tashkinov tree. Let $w$ denote one endpoint of
$e_0$, where (as given by AlgC) we may assume that $w$ is the unique
vertex in $G^*$ of degree at least $k-1$, if it exists.

As in the proof of Lemma~\ref{lem:Chi'Tash} applied to $G^*$, with
$e_0$, $\phi$ and $T$ given by AlgC, we find
that $d_T(w)\geq t(|V(T)|-1)+1$. But
clearly $d_T(w)\leq\mu(|V(T)|-1)$, so we find $\mu>t$, contradicting
the assumption of the lemma.
\end{proof}

Having Lemma~\ref{lem:Chi'TashAlg} (instead of
Lemma~\ref{lem:Chi'Tash}) we can simply go over the proofs
in Section~\ref{sec:m<n2logn} and in Section~\ref{sec:nEven} to
obtain an efficient algorithm that \whp colours the edges of $M\sim M(n,m)$ with
$\max\{\Delta(M),\lceil\rho(M)\rceil \}$ colours.
\medskip

Finally we give only a brief outline of what happens in the case
$m=\omega(n^2\log n)$ and $n$ is odd.
In a very similar way as above (but with more technical details), for
$k=\max\{\Delta(G),\lceil\rho(G)\rceil \}$ we show that AlgC
will succeed in finding a $k$-edge-colouring of a multigraph $G$ unless one of
$(b)$ or $(c)$ in Theorem~\ref{nnlapp} holds (here $(a)$ and $(d)$ will not hold, by
definition of $k$). The argument involves showing that if on the
contrary output (2) is the result when AlgC is applied to $G$, then
sets $X$, $Z$, $Q$ and $U$ as in the statement of
Lemma~\ref{nnl} can be constructed. (In particular $Q$ is the vertex
set of the Tashkinov tree $T$ given by output (2).) Then following the
proof of Theorem~\ref{nnlapp} tells us that one of $(b)$ or $(c)$
holds. We may then conclude that \whp AlgC succeeds in finding a
$k$-edge-colouring of $M\sim M(n,m)$ when  $m=\omega(n^2\log n)$ and
$n$ is odd, since \whp $(b)$ does not hold  according to
Lemma~\ref{lem0:DegDiff}, \
and \whp $(c)$ does not hold according to Observation~\ref{obs:multiplicity}.

\vspace{5mm}
\section{Concluding remarks and further discussion}\label{sec:con}

In this paper we considered the chromatic index of random multigraphs.
  We showed that random multigraphs are \whp first class, verifying the Conjecture~\ref{conj:GS} for the random multigraph case.   In particular, in Section~\ref{sec:tash} we gave general tools for bounding the chromatic index of a multigraph (see Theorem~\ref{nnlapp} and Lemma~\ref{lem:Chi'Tash}). We also addressed the informal conjecture mentioned in \cite{S} that almost all multigraphs are first class. To be more formal, Theorem~\ref{GSforRM} proves that if $M\sim M(n,m)$ and $\Pr(M)$ is the probability that $M$ was sampled according to the distribution $M(n,m)$, then $\lim_{n\to \infty}\sum_{\chi'(M)=\max\{\Delta,\lceil\rho\rceil \} }\Pr(M)=1$.
  
   Another natural way to generalize the definition of the random graph
   model $G(n,m)$ to multigraphs is to give each multigraph on $n$
   vertices and $m$ edges the same probability (that is, to sample a
   multigraph with $m$ edges and $n$ vertices uniformly at random). 
   Note that this probability space is not equivalent to the model
   $M(n,m)$ studied
   in this paper. Indeed, in $M(n,m)$
   multigraphs that have ``balanced" multiplicities are chosen with
   larger probabilities, unlike the uniform model that gives the same
   probability to a multigraph with one edge with multiplicity $m$,
   and a multigraph with many edges and smaller multiplicities. Note in
   addition that, similarly to the random graph model $G(n,p)$,
   our model gives a product probability space. 
   For these reasons we found the model $M(n,m)$ to be more natural to consider.
   However, we believe that using similar methods to those used in
   this paper, one can show that the Conjecture~\ref{conj:GS}
   holds also in the uniform model. 
  
  Note that our proofs rely heavily on the degree gap, and thus one
  cannot use the same techniques to prove similar results for
  multigraphs without this property. Therefore,  it would be
  interesting to study the same problem in the context of $d$-regular
  multigraphs in the random case, or for
  any family of nearly regular multigraphs. We expect this to be a challenging task, as for example the typical value of the chromatic index of a random $d$-regular graph $G_{n,d}$, for non-constant $d$, has been established only very recently by Ferber and Jain \cite{FerberJain}.  
  
   An additional (simpler) random multigraph model one can consider is a model corresponding to the random graph model $G(n,p)$. Let $M_p(n,m)$ be the probability space of all multigraphs on $n$ vertices where each pair $e\in \binom{[n]}2$  appears independently with multiplicities $\mu(e)\sim Poisson(\lambda)$, where $\lambda=\frac m{\binom n2}$. This way, the expected number of edges of the multigraph is $m$, and for a vertex $v$ we have $d(v)\sim Poisson(\frac {2m}{n})$.
Given the well-known connection between the binomial  and Poisson distributions, we can expect results similar to those obtained above to hold for this model as well. \\

Another related question one can also study concerns the list chromatic index in the context of random multigraphs. The \textit{list chromatic index} of a multigraph $M$, denoted by $\chi'_\ell$, is the smallest integer $k$ such that for every assignment of lists of size $k$ to each edge, there exists an edge-colouring such that each edge receives a colour from its list. Clearly, $\chi'(M)\leq \chi'_\ell(M)$, and a trivial upper bound for the list chromatic index is $\chi'_\ell< 2\Delta(M)$. Borodin, Kostochka, and Woodall \cite{BKW} showed that in fact $\chi'_\ell(M)\leq \left\lfloor 3\Delta(M)/2 \right\rfloor$, improving the previously known best upper bound of $9\Delta(M)/5$ due to Hind \cite{Hind}. The famous list colouring conjecture states that for every multigraph $M$ one has $\chi'(M)=\chi'_\ell(M)$ (see \cite{S}, p.260 for a short discussion). This conjecture was verified for the case that $M$ is a bipartite multigraph by Galvin \cite{Galvin}. Later on, Kahn proved \cite{KahnList} that $\chi'(M)=(1+o(1))\chi'_\ell(M)$.
The presently rather limited understanding of  edge colouring problems for multigraphs appears to indicate that the list colouring conjecture is very difficult.  
For this reason, we believe that attacking this problem in the random multigraph setting could be fruitful, as in this case we found the exact form of the chromatic index. 

\subsection*{Acknowledgements}
The authors would like to thank the referees of the paper for their careful reading and  helpful remarks.

%
%

\newpage

\end{document}